\numberwithin{equation}{section}
\DeclareFontFamily{U}{cal}{}
\DeclareFontShape{U}{cal}{m}{n}{<->cmsy10}{}
\DeclareSymbolFont{rcal}{U}{cal}{m}{n}
\DeclareSymbolFontAlphabet{\mathcal}{rcal}
\newtheorem{Def}{Definition}[section]
\newtheorem{Prop}[Def]{Proposition}
\newtheorem{Theo}[Def]{Theorem}
\newtheorem{Lem}[Def]{Lemma}
\newtheorem{Koro}[Def]{Corollary}
\theoremstyle{definition}
\newtheorem{Rem}[Def]{Remark}
\newcommand{\bsm}{\begin{smallmatrix}}
\newcommand{\esm}{\end{smallmatrix}}
\newcommand{\add}{{\rm add}}
\newcommand{\gd}{{\rm gl.dim }}
\def\ed{\mathop{\rm ext.dim}\nolimits}
\def\oed{\mathop{\rm \Omega\text{-}ext.dim}\nolimits}
\newcommand{\rad}{{\rm rad}}
\newcommand{\pd}{{\rm pd}}
\newcommand{\id}{{\rm id}}
\newcommand{\M}{{\mathcal M}}
\newcommand{\cpx}[1]{#1^{\bullet}}
\newcommand{\D}[1]{{\mathscr D}(#1)}
\newcommand{\Db}[1]{{\mathscr D}^b(#1)}
\newcommand{\C}[1]{{\mathscr C}(#1)}
\newcommand{\Cb}[1]{{\mathscr C}^b(#1)}
\newcommand{\K}[1]{{\mathscr K}(#1)}
\newcommand{\Kb}[1]{{\mathscr K}^b(#1)}
\newcommand{\modcat}{\ensuremath{\text{{\rm -mod}}}}
\newcommand{\Modcat}{\ensuremath{\text{{\rm -Mod}}}}
\newcommand{\stmodcat}[1]{#1\mbox{{\rm -{\underline{mod}}}}}
\newcommand{\pmodcat}[1]{#1\mbox{{\rm -proj}}}
\newcommand{\imodcat}[1]{#1\mbox{{\rm -inj}}}
\newcommand{\opp}{^{\rm op}}
\newcommand{\Hom}{{\rm Hom}}
\newcommand{\lra}{\longrightarrow}
\newcommand{\lraf}[1]{\stackrel{#1}{\lra}}
\newcommand{\ra}{\rightarrow}
\newcommand{\Kfb}[1]{{\mathscr K}^{-,b}(#1)}
\newcommand{\Kpb}[1]{{\mathscr K}^{+,b}(#1)}
\title{ \bf   Extension dimensions under singular equivalences
and recollements
\footnotetext{
2020 Mathematics Subject Classification: 16E05, 16G10, 16E10, 18G80.}\\
\footnotetext{
Keywords: Extension dimension; Singular equivalence of Morita type with level; Recollement.}
\footnotetext{Email addresses: zhangjb@ahu.edu.cn, zhengjunling@cjlu.edu.cn}
}
\author {Jinbi Zhang, Junling Zheng
\thanks{Corresponding author} \\
{\it \scriptsize  School of Mathematical Sciences, Anhui University, Hefei, 230601, Anhui Province, PR China}\\
{\it \scriptsize  Department of Mathematics, China Jiliang University, Hangzhou, 310018, Zhejiang Province, PR China}
}
\date{}
\begin{document}

\maketitle
\begin{abstract}
The extension dimensions of an Artin algebra give a reasonable way of measuring how far an algebra is from being representation-finite.  In this paper we mainly study  extension dimensions linked by recollements of derived module categories and singular equivalences of Morita type with level, and establish a series of new  inequalities and relationships among their extension dimensions. 
\end{abstract}

\section{Introduction}
The dimension of a triangulated category, introduced by Rouquier in \cite{r06,r08}, is an invariant that measures how quickly the triangulated category can be constructed from a single object. This invariant provides valuable insights into the representation theory of Artin algebras (see \cite{han09, kk06, Ma2024,opp09, r06}) and plays a key role in calculating the representation dimension of Artin algebras (see \cite{opp09, r06}). 
Analogous to the dimension of triangulated category, the extension dimension of abelian categories was introduced by Beligiannis in \cite{b08}. For an Artin algebra $A$, we define $\ed(A)$ as the extension dimension of the category of all finitely generated left $A$-modules. Beligiannis demonstrated that $\ed(A) = 0$ if and only if $A$ is representation-finite. Thus, the extension dimension provides a meaningful way to measure how far an algebra is from being representation-finite. 
While many upper bounds for the extension dimension of a given Artin algebra have been established (see \cite{b08, zh22, zheng2020}), determining its precise value remains a challenging task. 
To gain a deeper understanding of the extension dimensions for arbitrary Artin algebras, it is reasonable to investigate the relationships between the  extension dimensions of different Artin algebras that are connected in certain nice ways.

A useful type of natural linkages among algebras is recollements of derived module categories, introduced by Beilinson, Bernstein and Deligne in \cite{BBD}. A recollement between the derived module categories  of algebras $A$, $B$ and $C$
\begin{align*}
\xymatrixcolsep{4pc}\xymatrix{
\D{B\Modcat} \ar[r]|{i_*=i_!} &\D{A\Modcat} \ar@<-2ex>[l]|{i^*} \ar@<2ex>[l]|{i^!} \ar[r]|{j^!=j^*}  &\D{C\Modcat} \ar@<-2ex>[l]|{j_!} \ar@<2ex>[l]|{j_{*}}
}
\end{align*} provides an analogue of exact sequences for derived module categories. This framework has proven to be highly effective for understanding the relationships among algebras. 
For example, it has been applied in the study of global and finitistic dimensions \cite{akly17,cx17,happel93}, Hochschild (co)homology \cite{han14,Keller98,kn09}, K-theory \cite{akly17,cx12,sch06}, self-injective dimensions \cite{qin18}, and other homological properties such as the Igusa-Todorov properties \cite{ww22}, the Han conjecture \cite{wxzz24} and the Auslander-Reiten conjecture \cite{chqw23}. 
In this paper, we explore the behavior of extension dimensions under recollements of derived categories of Artin algebras.

To establish concrete bounds for extension dimensions, we adopt the concept of homological widths for complexes, introduced by Chen and Xi \cite{cx17}. 
Broadly speaking, the homological width of a bounded complex of projective modules measures, up to homotopy equivalence, how large the minimal interval is over which its non-zero terms are distributed. Particularly, if a module has finite projective dimension, then its homological width corresponds exactly to the projective dimension. 
For a complex $\cpx{X}$, we write $w(\cpx{X})$ for the homological width of $\cpx{X}$.

For an Artin algebra $A$, we denote by $A\Modcat$ the category of all left $A$-modules, by $A\modcat$ the category of all finitely generated left $A$-modules. The concept of extension dimension for an algebra can be extended to subcategories of its module category. We denote by $\ed(\mathcal{A})$ the extension dimension of the subcategory $\mathcal{A}$ of all finitely generated left $A$-modules. Then the extension dimension $\ed(A)$ refers to the extension dimension of $A\modcat$.
we define $\Omega^{n}(A\modcat)$ as the full subcategory of $A\modcat$ consisting of all $n$-th syzygies $A$-modules and $\Omega^{\infty}(A\modcat)$ as the full subcategory of $A\modcat$ consisting of all $\infty$-th syzygies $A$-modules. Since the extension dimension of the $i$-th syzygy module categories for $i\in \mathbb{N}$ stabilizes for sufficiently large  $i$ (see Definition \ref{ed-def}), we define this constant as the $\Omega$-extension dimension of $A$, denoted by $\oed(A)$.

The following result establishes key bounds for extension dimensions under recollements:

\begin{Theo}\label{main-thm-ed-rec}
{\rm (Theorem \ref{thm-ext-rec})}
Let $A$, $B$ and $C$ be three Artin algebras.
Suppose that there is a recollement among the derived categories
$\D{A\Modcat}$, $\D{B\Modcat}$ and $\D{C\Modcat}$$:$
\begin{align}\label{main-thm-ed-rec-f}
\xymatrixcolsep{4pc}\xymatrix{
\D{B\Modcat} \ar[r]|{i_*=i_!} &\D{A\Modcat} \ar@<-2ex>[l]|{i^*} \ar@<2ex>[l]|{i^!} \ar[r]|{j^!=j^*}  &\D{C\Modcat}. \ar@<-2ex>[l]|{j_!} \ar@<2ex>[l]|{j_{*}}
}
\end{align}

{\rm (1)} Suppose that the recollement {\rm (\ref{main-thm-ed-rec-f})} extends one step downwards. Then 

{\rm (i)} $\ed(B)\le \ed(A)+w(i_{*}(B))$.

{\rm (ii)} $\ed(C)\le \ed(A)+w(j^{*}(A))$.

{\rm (iii)} $ \ed(A)\le 2\ed(B)+\ed(C)+\max\{w(i_*(B),w(j^*(A))\}+2.$

{\rm (2)} Suppose that the recollement {\rm (\ref{main-thm-ed-rec-f})} extends one step downwards and extends one step upwards. Then 

{\rm (i)} $\ed(A)\le \ed(B)+\ed(C)+\max\{w(i_*(B),w(j^*(A))\}+1.$

{\rm (ii)} $\max\{ \oed(B),\oed(C)\}\le \oed(A)\le \oed(B)+\oed(C)+1.$

{\rm (iii)} 
%$\max\{ \ed(\Omega^{\infty}(B\modcat)),\ed(\Omega^{\infty}(C\modcat))\}\le \ed(\Omega^{\infty}(A\modcat))\le \ed(\Omega^{\infty}(B\modcat))+\ed(\Omega^{\infty}(C\modcat))+1.$
$\max\{ \ed(\Omega^{\infty}(B\modcat)),\ed(\Omega^{\infty}(C\modcat))\}\le \ed(\Omega^{\infty}(A\modcat)),\text{ and}$
$$ \ed(\Omega^{\infty}(A\modcat))\le \ed(\Omega^{\infty}(B\modcat))+\ed(\Omega^{\infty}(C\modcat))+1.$$

{\rm (iv)} If, in addition, $\gd(C)<\infty$, then $$\oed(B)=\oed(A)\text{ and } \ed(\Omega^{\infty}(B\modcat))=\ed(\Omega^{\infty}(A\modcat)).$$

{\rm (v)} If, in addition, $\gd(B)<\infty$, then $$\oed(C)=\oed(A)\text{ and } \ed(\Omega^{\infty}(C\modcat))=\ed(\Omega^{\infty}(A\modcat)).$$
\end{Theo}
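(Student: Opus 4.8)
The plan is to treat the five parts of Theorem~\ref{main-thm-ed-rec} as consequences of a single underlying mechanism: a recollement of derived module categories that extends one step downwards (and, where required, one step upwards) produces, after applying the six functors to the regular modules and truncating, short exact sequences in the relevant module categories whose outer terms are controlled by $\ed(B)$, $\ed(C)$, and the homological widths $w(i_*(B))$, $w(j^*(A))$. Since $\ed$ is sub-additive along short exact sequences up to an additive constant of $1$ (the standard property that $\ed(\X * \Y) \le \ed(\X) + \ed(\Y) + 1$, which I would quote from the earlier setup), stacking finitely many such sequences yields the stated inequalities. I would organize the proof so that (1) is proved first as the base case, then (2)(i) is obtained by the same argument but with the extra upward extension removing one of the two copies of $\ed(B)$ that appear in (1)(iii); parts (2)(ii)--(v) are then the ``stable'' or ``syzygy'' refinements of (2)(i).

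For part (2)(ii), the key point is that when the recollement extends both upwards and downwards, the functors $i^*$, $i^!$, $j_!$, $j_*$ restrict (after suitable shifts) to functors between the bounded homotopy categories of projectives, hence send syzygies to syzygies up to a bounded shift; so on the subcategories $\Omega^n(-\modcat)$ for $n \gg 0$ the homological-width contributions disappear and only the ``$+1$'' survives. Concretely I would: (a) recall from the earlier development that $\oed(A) = \ed(\Omega^n(A\modcat))$ for all sufficiently large $n$, and similarly for $B$ and $C$; (b) show $i^*$ and $j^!=j^*$ carry a high syzygy of an $A$-module to (a direct summand of, up to a high syzygy of) a $B$-module resp.\ $C$-module, giving the lower bounds $\oed(B) \le \oed(A)$ and $\oed(C) \le \oed(A)$; (c) show conversely that a high syzygy of an $A$-module $X$ sits in a short exact sequence (coming from the triangle $i_*i^!X \to X \to j_*j^*X \to$, truncated and shifted into the module category) with one outer term a high syzygy over $B$ and the other a high syzygy over $C$, giving $\oed(A) \le \oed(B)+\oed(C)+1$. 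Part (2)(iii) is proved by exactly the same three-step scheme with $\Omega^{\infty}(-\modcat)$ in place of $\Omega^n(-\modcat)$; here one uses that the functors preserve $\infty$-syzygies on the nose (no shift-dependence issue) so the argument is if anything cleaner.

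For parts (2)(iv) and (2)(v), the additional hypothesis $\gd(C)<\infty$ (resp.\ $\gd(B)<\infty$) forces one of the two terms in the upper bound of (2)(ii)/(2)(iii) to contribute nothing to the stable picture: if $\gd(C)<\infty$ then every $C$-module has finite projective dimension, so $\Omega^n(C\modcat) \subseteq C\pmodcat$ for $n\gg 0$, whence $\ed(\Omega^n(C\modcat))=0$ and $\Omega^{\infty}(C\modcat)$ consists of projectives; plugging this into the upper bounds of (ii) and (iii) collapses them to $\oed(A)\le \oed(B)+1$ and $\ed(\Omega^{\infty}(A\modcat)) \le \ed(\Omega^{\infty}(B\modcat))+1$, which is not yet equality. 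To get genuine equality I would instead observe that when $\gd(C)<\infty$ the functor $i_*$ is a ``homological embedding up to finite projective dimension'', so the triangle $i_*i^!X \to X \to j_*j^*X\to$ has third term $j_*j^*X$ of bounded homological width; truncating it shows that a high syzygy of $X$ and a high syzygy of $i^!X$ differ by adding/deleting projective summands and a bounded syzygy shift, giving $\ed(\Omega^{\infty}(A\modcat)) = \ed(\Omega^{\infty}(B\modcat))$ directly, and similarly $\oed(A)=\oed(B)$. I expect the main obstacle to be exactly this last point: controlling the error terms precisely enough to upgrade the one-sided inequalities into equalities, i.e.\ verifying that the truncation-and-shift operations that move a complex in $\Db{A\Modcat}$ into $A\modcat$ interact correctly with the functors $i^*, i_*, i^!, j^!, j_!, j_*$ on syzygy categories and do not secretly inflate the extension dimension by an unbounded amount; the rest is bookkeeping with the sub-additivity of $\ed$ and the already-established stabilization of syzygy extension dimensions.
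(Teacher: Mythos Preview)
Your plan is essentially the paper's plan: use the two canonical recollement triangles, apply derived syzygy (or cosyzygy) functors until all terms are modules, read off short exact sequences, and feed these into the sub-additivity of $\ed$ together with Lemma~\ref{lem-ext-pi}. The paper packages the passage ``complex $\to$ module'' via Wei's syzygy complexes $\Omega_{\mathscr D}^n$ and $\Omega^{\mathscr D}_n$ and Hu--Pan's stable functor of a nonnegative functor, which is exactly your ``truncating and shifting into the module category''; your identification of why $2\ed(B)$ appears in (1)(iii) but only $\ed(B)$ in (2)(i) also matches the paper's mechanism (the upward extension lets $i_!$ preserve injectives, so one may replace the two-piece decomposition $\cpx K\to i^!(X)\to\cpx Z$ by a single cosyzygy truncation).

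There is one concrete issue. For (2)(ii)--(v) you run the argument through the triangle $i_*i^!X\to X\to j_*j^*X$, whereas the paper uses the \emph{other} triangle $j_!j^!X\to X\to i_*i^*X$ and applies $\Omega_{\mathscr D}^p$. This matters in (2)(iv): when $\gd(C)<\infty$ you claim $j_*j^*X$ has bounded homological width, but $j_*$ is a right adjoint and in general only carries $\Kb{\imodcat{C}}$ into $\Kb{\imodcat{A}}$, not $\Kb{\pmodcat{C}}$ into $\Kb{\pmodcat{A}}$ (the downward extension gives $j_*$ a right adjoint, not a left one, so there is no reason for $j_*$ to preserve compacts). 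Hence $j_*j^*X$ need not die under high syzygies, and your route to $\oed(A)=\oed(B)$ breaks at exactly the step you flagged as the obstacle. The fix is to switch to the paper's triangle: $j_!$ always preserves $\Kb{\rm proj}$, so with $\gd(C)<\infty$ one has $j_!j^!X\in\Kb{\pmodcat{A}}$ with uniformly bounded width, whence $\Omega_{\mathscr D}^p(j_!j^!X)=0$ for $p\gg 0$ and $\Omega_{\mathscr D}^p(X)\simeq\Omega_{\mathscr D}^p(i_*i^*X)$ modulo projectives; the equality then follows from the stable-functor analysis of $i_*i^*$. The analogous remark applies to (2)(v).
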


Another natural linkages between algebras is through singular equivalences of Morita type with level, a concept introduced by Wang in \cite{Wang15}. This generalizes stable equivalences of Morita type, as introduced by Broué \cite{Bro90}, and singular equivalences of Morita type in the sense of Chen and Sun \cite{CS12} (see also \cite{ZZ13}). To provide concrete examples of this idea, Chen, Liu, and Wang in \cite{CLW23} demonstrated that a tensor functor with a bimodule defines a singular equivalence of Morita type with level under certain conditions. Additionally, Dalezios in \cite{Dal21} showed that for certain Gorenstein algebras, a singular equivalence induced by tensoring with a complex of bimodules always leads to a singular equivalence of Morita type with level. Building on this, Qin in \cite{qin22} provided a complex version of Chen-Liu-Wang's work, thereby generalizing Dalezios' result to arbitrary algebras, not limited to Gorenstein algebras. Recently, in \cite{qxzz24}, the authors proved that bounded extension can induce new examples of singular equivalences of Morita type with level.
This type of equivalence has profound implications in understanding homological properties, such as the finitistic dimension conjecture (\cite{Wang15}) and Keller's conjecture for singular Hochschild cohomology \cite{CLW20}. In this paper, we explore the behavior of extension dimensions under singular equivalences of Morita type with level.

\begin{Theo}\label{main-thm-level-ed}
{\rm (Theorem \ref{thm-level-ed})}
Let $A$ and $B$ be two finite dimensional algebras.
If $A$ and $B$ are singularly equivalent of Morita type with level $l$, then

{\rm (1)} $\mid\ed(A)-\ed(B)\mid\le l.$

{\rm (2)} $\oed(A)=\oed(B)$.

{\rm (3)} $\ed(\Omega^{\infty}(A\modcat))=\ed(\Omega^{\infty}(B\modcat)).$
\end{Theo}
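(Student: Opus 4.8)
The plan is to transfer information through the two exact functors attached to the equivalence. Write the level-$l$ data as bimodules ${}_AM_B$ and ${}_BN_A$, each projective on both sides, together with stable bimodule isomorphisms $M\otimes_B N\cong\Omega^l_{A^e}(A)$ and $N\otimes_A M\cong\Omega^l_{B^e}(B)$, and set $F:=N\otimes_A-\colon A\modcat\to B\modcat$ and $G:=M\otimes_B-\colon B\modcat\to A\modcat$. I would first record the elementary consequences of one-sided projectivity of $M$ and $N$: the functors $F$ and $G$ are exact and preserve projectives, hence $F$ sends short exact sequences to short exact sequences, so $F([\mathcal T]_n)\subseteq[F(\mathcal T)]_n$ for every class $\mathcal T$, $F$ commutes with $\Omega$ up to projective summands, and $F$ maps $\Omega^n(A\modcat)$ into $\Omega^n(B\modcat)$ and $\Omega^\infty(A\modcat)$ into $\Omega^\infty(B\modcat)$; symmetrically for $G$. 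Tensoring the two stable bimodule isomorphisms with a module and using the one-sided projectivity of the bimodules then yields, for all $X\in A\modcat$ and $Y\in B\modcat$, stable isomorphisms $GF(X)\cong\Omega^l(X)$ and $FG(Y)\cong\Omega^l(Y)$; in particular $\Omega^l(Y)$ is, up to projective summands, a direct summand of $FG(Y)$, and conversely.

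For (1), I would put $d:=\ed(A)$ and pick $T$ with $A\modcat=[T\oplus A]_{d+1}$. Given $Y\in B\modcat$, one has $G(Y)\in[T\oplus A]_{d+1}$, hence $FG(Y)\in[F(T)\oplus N]_{d+1}\subseteq[F(T)\oplus B]_{d+1}$ because $N$ is a projective $B$-module, and therefore $\Omega^l(Y)\in[F(T)\oplus B]_{d+1}$. On the other hand, splitting the first $l$ steps of a projective resolution of $Y$ into short exact sequences gives $Y\in[\Omega^l(Y)\oplus B]_{l+1}$, and combining the two memberships yields $Y\in[F(T)\oplus B]_{d+l+1}$. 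Thus $\ed(B)\le\ed(A)+l$; running the symmetric argument (interchanging $A\leftrightarrow B$ and $F\leftrightarrow G$) gives $\ed(A)\le\ed(B)+l$, which is (1).

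For (2) and (3) I would use the same scheme, together with one extra observation: $\Omega^l$ is essentially surjective, up to projective summands, both on the finite syzygy categories (every object of $\Omega^{n+l}(A\modcat)$ is, up to projective summands, $\Omega^l$ of an object of $\Omega^n(A\modcat)$) and on $\Omega^\infty(A\modcat)$ (an $\infty$-th syzygy $M$ admits an infinite exact sequence $0\to M\to P^0\to P^1\to\cdots$ with each $P^j$ projective, whence $M$ is, up to projective summands, $\Omega(N)$ with $N$ the cokernel of $M\to P^0$, and $N$ is again an $\infty$-th syzygy; iterate $l$ times). For (2), fix $i$ so large that $\ed(\Omega^i(A\modcat))=\oed(A)$ and $\ed(\Omega^{i+l}(B\modcat))=\oed(B)$, and pick $T_i$ with $\Omega^i(A\modcat)=[T_i\oplus A]_{\oed(A)+1}$. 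Given $Z\in\Omega^{i+l}(B\modcat)$, write $Z$ (up to projective summands) as $\Omega^l(Y)$ with $Y\in\Omega^i(B\modcat)$; then $G(Y)\in\Omega^i(A\modcat)=[T_i\oplus A]_{\oed(A)+1}$, so $FG(Y)\in[F(T_i)\oplus B]_{\oed(A)+1}$, hence $\Omega^l(Y)$ and therefore $Z$ lie in $[F(T_i)\oplus B]_{\oed(A)+1}$. This gives $\oed(B)=\ed(\Omega^{i+l}(B\modcat))\le\oed(A)$, and symmetry yields $\oed(A)=\oed(B)$. Part (3) is identical with $\Omega^\infty$ replacing the finite syzygy categories, using that $F$ and $G$ preserve $\infty$-th syzygies and that $\Omega^l$ is essentially surjective on $\Omega^\infty(A\modcat)$: one obtains $\Omega^\infty(B\modcat)\subseteq[F(T)\oplus B]_{e+1}$, where $e:=\ed(\Omega^\infty(A\modcat))$ and $T\in\Omega^\infty(A\modcat)$ satisfies $\Omega^\infty(A\modcat)=[T\oplus A]_{e+1}$, so $\ed(\Omega^\infty(B\modcat))\le e$, and symmetry closes the argument.

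The verifications that $F$ respects the operations $[\,\cdot\,]_n$ and syzygies are immediate from exactness and preservation of projectives, and the projective direct summands appearing in $GF\cong\Omega^l$, $FG\cong\Omega^l$ and in the chosen generators are harmless, since $[\,\cdot\,]_1$ absorbs all projectives once $A$ (respectively $B$) is a direct summand of the generator. The only step that is not purely formal, and which I expect to be the main obstacle, is the essential surjectivity of $\Omega^l$ on $\Omega^\infty(A\modcat)$ used for the sharp equality in (3); I would derive it from the standard Auslander--Bridger fact that a module is an $\infty$-th syzygy precisely when it is $\infty$-torsionfree, and that the latter class consists exactly of the modules admitting an infinite projective co-resolution. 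The remaining content is a routine transfer through $F$ and $G$.
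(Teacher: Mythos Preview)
Your proofs of (2) and (3) follow the same line as the paper's and are correct; in fact, for (3) you make explicit the step the paper only alludes to, namely that every $Z\in\Omega^{\infty}(B\modcat)$ is, up to projective summands, of the form $\Omega^l(Y)$ with $Y\in\Omega^{\infty}(B\modcat)$.

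However, your argument for (1) contains a genuine gap. The assertion ``splitting the first $l$ steps of a projective resolution of $Y$ gives $Y\in[\Omega^l(Y)\oplus B]_{l+1}$'' is false in general, and the projective resolution does not supply it: in the short exact sequences $0\to\Omega^{i+1}(Y)\to P_i\to\Omega^i(Y)\to 0$ the projective sits in the middle, not $Y$, so no iteration places $Y$ itself in the middle of an extension built from $\Omega^l(Y)$ and projectives. A concrete counterexample: take any $Y$ with $\pd(_BY)=1$. Then $\Omega(Y)$ is projective, so $\add(\Omega(Y)\oplus B)\subseteq\pmodcat{B}$, and since extensions of projectives by projectives split, $[\Omega(Y)\oplus B]_2\subseteq\pmodcat{B}$; but $Y$ is not projective, hence $Y\notin[\Omega(Y)\oplus B]_2$. (For instance, over $kQ$ with $Q\colon 1\to 2$, take $Y=S_2$.)

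The paper repairs this by passing through \emph{cosyzygies}: once you know $\Omega^l(Y)\in[N\otimes_AU\oplus B]_{d+1}$, Lemma~\ref{lem-ext-pi}(3) gives
\[
Y\in\bigl[\Omega^{-l}(N\otimes_AU\oplus B)\oplus\textstyle\bigoplus_{i=0}^{l-1}\Omega^{-i}(B)\bigr]_{d+l+1},
\]
which is a single generator independent of $Y$ and yields $\ed(B)\le\ed(A)+l$. The point is that recovering $Y$ from $\Omega^l(Y)$ with a controlled number of extensions necessarily involves the injective coresolution (equivalently, cosyzygies of $B$), not the projective resolution. The rest of your strategy for (1) is exactly the paper's; only this recovery step needs to be replaced.
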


This paper is organized as follows: In Section 2, we introduce necessary definitions and recall some key results. Section 3 investigates the behavior of extension dimensions under recollements of derived categories for Artin algebras. Section 4 examines the behavior of extension dimensions under singular equivalences of Morita type with level.

\section{Preliminaries}

In this section, we shall fix some notations, and recall some definitions.

Throughout this paper, $k$ is an arbitrary but fixed commutative Artin ring. Unless stated otherwise, all algebras are Artin $k$-algebras with unit, and all modules are finitely generated unitary left modules; all categories will be $k$-categories and all functors are $k$-functors.

Let $A$ be an Artin algebra and $\ell\ell(A)$ stand for the Loewy length of $A$. We denote by $A\Modcat$ the category of all left $A$-modules, by $A\modcat$ the category of all finitely generated left $A$-modules, by $\pmodcat{A}$ the category of all finitely generated projective $A$-modules, and by $\imodcat{A}$ the category of all finitely generated projective $A$-modules. All subcategories of $A\Modcat$ are full, additive and closed under isomorphisms.
For a class $\mathcal{C}$ of $A$-modules, we write $\add(\mathcal{C})$ for the smallest full subcategory of $A\modcat$ containing $\mathcal{C}$ and closed under finite direct sums and direct summands.
When $\mathcal{C}$ consists of only one module $C$, we write $\add(C)$ for $\add(\mathcal{C})$.
In particular, $\add({_A}A)=\pmodcat{A}$.
Let $M$ be an $A$-module.
If $f:P\ra M$ is the projective cover of $M$ with $P$ projective, then the kernel of $f$ is called the \emph{syzygy} of $M$, denoted by $\Omega(M)$.
Dually, if $g:M\ra I$ is the injective envelope of $M$ with $I$ injective, then the cokernel of $g$ is called the \emph{cosyzygy} of $M$, denoted by $\Omega^{-1}(M)$. Additionally, let $\Omega^0$ be the identity functor in $A\modcat$ and $\Omega^1:=\Omega$. Inductively, for any $n\ge 2$, define $\Omega^n(M):=\Omega^1(\Omega^{n-1}(M))$ and $\Omega^{-n}(M):=\Omega^{-1}(\Omega^{-n+1}(M))$.
We denoted by $\pd(_AM)$ and $\id(_AM)$ the projective and injective dimension, respectively.

Let $A^{\rm {op}}$ be the opposite algebra of $A$, and $A^e = A\otimes _k A^{\opp}$ be the enveloping algebra of $A$. We identify
$A$-$A$-bimodules with left $A^e$-modules. Let $D:=\Hom_k(-,E(k/\rad(k)))$ the usual duality from $A\modcat$ to $A^{\rm {op}}\modcat$, where $\rad(k)$ denotes the radical of $k$ and $E(k/\rad(k))$ denotes the injective envelope of $k/\rad(k)$. The duality $\Hom_A(-, A)$ from $\pmodcat{A}$ to $\pmodcat{A\opp}$ is denoted by $^*$, namely for each projective $A$-module $P$, the projective $A\opp$-module $\Hom_A(P, A)$ is written as $P^*$. We write $\nu_A$ for the Nakayama functor $D\Hom_A(-,A): \pmodcat{A}\ra \imodcat{A}$.

\medskip
Let $\cal C$ be an additive category. For two morphisms $f:X\rightarrow Y$ and $g:Y\rightarrow Z$ in $\cal C$, their composition is denoted by $fg$, which is a morphism from $X$ to $Z$. But for two functors $F:\mathcal{C}\ra \mathcal{D}$ and $G:\mathcal{D}\ra\mathcal{E}$ of categories, their composition is written as $GF$.

Suppose $\mathcal{C}\subseteq A\Modcat$. A (co)chain complex $\cpx{X}=(X^i, d_X^i)$ over $\mathcal{C}$ is a sequence of objects $X^i$ in
$\cal C$ with morphisms $d_{\cpx{X}}^{i}:X^i\ra X^{i+1}$ such that $d_{\cpx{X}}^{i}d_{\cpx{X}}^{i+1}=0$ for all $i \in {\mathbb Z}$.
A (co)chain map $f$ from $\cpx{X}=(X^i, d_X^i)$ to $\cpx{Y}=(Y^i, d_Y^i)$, is a set of maps $\cpx{f}=\{f^i:X^i\ra Y^i\mid i\in \mathbb{Z}\}$ such that $f^id^i_{\cpx{Y}}=d^i_{\cpx{X}}f^{i+1}$. 
Given a chain map $\cpx{f}:\cpx{X}\to\cpx{Y}$, its mapping cone is denoted by ${\rm Con}(\cpx{f})$. For an integer $n$, the $n$-th cohomology of $\cpx{X}$ is denoted by $H^n(\cpx{X})$. Let $\sup{(\cpx{X})}$ and $\inf{(\cpx{X})}$ be the supremum and minimum of indices $i\in \mathbb{Z}$ such that $H^i(\cpx{X})\ne 0$, respectively. If $\cpx{X}$ is acyclic, that is, $H^i(\cpx{X})=0$ for all $i\in\mathbb{Z}$, then we understand that $\sup(\cpx{X})=-\infty$ and $\inf(\cpx{X})=+\infty$. If $\cpx{X}$ is not acyclic, then  $\inf(\cpx{X})\le \sup(\cpx{X})$, and $H^n(\cpx{X})=0$ if $\sup{(\cpx{X})}$ is an integer and $n>\sup{(\cpx{X})}$ or if $\inf{(\cpx{X})}$ is an integer and $n<\inf{(\cpx{X})}$.
For a complex $\cpx{X}$, the complex $\cpx{X}[1]$ is obtained from $\cpx{X}$ by shifting $\cpx{X}$ to the left by one degree, and the complex $\cpx{X}[-1]$ is obtained from $\cpx{X}$ by shifting $\cpx{X}$ to the right by one degree. For $n\in \mathbb{Z}$, we denoted by $\cpx{X}_{\le n}$ (respectively, $\cpx{X}_{\ge n}$) the brutal trucated complex which is obtained from the given complex $\cpx{X}$ by replacing each $X^i$ with $0$ for $i>n$ (respectively, $i<n$).
A complex $\cpx{X}=(X^i, d_X^i)$
is called \textit{bounded above} (respectively, \textit{bounded below}) if $X^{i} = 0$ for all but finitely many positive (respectively, negative) integers $i$. A complex $\cpx{X}$ is called \textit{bounded} if it is both bounded above and bounded below, equivalently,
$X^{i} = 0$ for all but finitely many $i$. A complex $\cpx{X}$ is called \textit{cohomologically bounded} if all but finitely many cohomologies of $\cpx{X}$ are zero.

We denote by $\C{\mathcal C}$  the category of all complexes over $\cal C$ with chain map, by $\K{\mathcal C}$ the homotopy category of complexes over $\mathcal{C}$, by $\mathscr{K}^{-,b}(\mathcal C)$ the homotopy category of bounded above complexes over $\mathcal{C}$, by $\mathscr{K}^{+,b}(\mathcal C)$ the homotopy category of bounded below complexes over $\mathcal{C}$, and by $\mathscr{K}^{b}(\mathcal C)$ the homotopy category of bounded complexes over $\mathcal{C}$. Let $I$ be a subset of $\mathbb{Z}$. We denoted by $\mathscr{K}^{I}(\mathcal C)$ the subcategory of $\mathscr{K}(\mathcal C)$ consisting of complexes with the $i$-th component is $0$ for each $i\notin I$. For instance, $\mathscr{K}^{\{0\}}(\mathcal C)=\mathcal{C}$.
If $\cal C$ is an abelian category, then let $\D{\cal C}$ be the derived category of complexes over $\cal C$, $\Db{\mathcal C}$ be the full subcategory of $\D{\cal C}$ consisting of cohomologically bounded complexes over $\mathcal{C}$, and $\mathscr{D}^{I}(\mathcal C)$ be the subcategory of $\D{\mathcal C}$ consisting of complexes with the $i$-th cohomology is $0$ for any $i\in I$. For instance, $\mathscr{D}^{\{0\}}(\mathcal C)=\mathcal{C}$.

Let $A$ be an Artin algebra and  $\cpx{X}\in\D{A\Modcat}$. Suppose that $\cpx{X}$ is isomorphic in $\Db{A\modcat}$ to a bounded complex
$\cpx{Q}\in\Cb{\pmodcat{A}}$. Then the \textit{homological width} of $\cpx{X}$ is defined as:
$$w(\cpx{X})=\min\left\{
\alpha_{\cpx{P}}-\beta_{\cpx{P}}\,\geq 0\, \bigg |
\begin{array}{ll}
\cpx{P}\simeq \cpx{X} \;\;\mbox{in}\;\;\Db{A\modcat}\;\;\mbox{for}\;\;
\cpx{P}\in\Cb{\pmodcat{A}} \\
\;\;\mbox{with}\;\; P^i=0\;\;\mbox{for}\;\; i<\beta_{\cpx{P}}\;\;\mbox{or}\;\; i>\alpha_{\cpx{P}}
\end{array}
\right\}.
$$
Dually, if $\cpx{X}$ is isomorphic in $\Db{A\modcat}$ to a bounded complex
$\cpx{J}\in\Cb{\imodcat{A}}$, then the \textit{homological cowidth} of $\cpx{X}$ is defined as:
$$
cw(\cpx{Y})=\min\left\{
\alpha_{\cpx{I}}-\beta_{\cpx{I}}\,\geq 0\, \bigg  |
\begin{array}{ll}
\cpx{I}\simeq \cpx{Y} \;\;\mbox{in}\;\;\D{R}\;\;\mbox{for}\;\;
\cpx{I}\in\Cb{\imodcat{A}} \\
\;\;\mbox{with}\;\; I^i=0\;\;\mbox{for}\;\; i<\beta_{\cpx{I}}\;\;\mbox{or}\;\; i>\alpha_{\cpx{I}}
\end{array}
\right\}.
$$

In this paper, all functors between triangulated categories are assumed to be triangle functors. For convenience, we do not distinguish $\Kb{\pmodcat{A}}$, $\Kb{\imodcat{A}}$ and $\Db{A\modcat}$ from their essential images under the canonical full embeddings into $\D{A\Modcat}$. Furthermore, we always identify $A\Modcat$ with the full subcategory of $\D{A\Modcat}$ consisting of all stalk complexes concentrated on degree $0$.

\subsection{Extension dimensions}
In this subsection, we shall recall the definition and some results of the extension dimensions (see \cite{b08,zheng2020}).

Let $A$ be an Artin algebra. We denote by $A\modcat$ the category of all finitely generated left $A$-modules. For a class $\M$ of $A$-modules, we denote by $\add(\M)$ the smallest full subcategory of $A\modcat$ containing $\M$ and closed under finite direct sums and direct summands. When $\M$ consists of only one object $M$, we write $\add(M)$ for $\add(\M)$.
Let $\M_1,\M_2,\cdots,\M_n$ be subcategories of $A\modcat$. Define
\begin{align*}
\M_1\bullet \M_2
:&=\add(\{X\in A\modcat \mid \mbox{there exists an exact sequence } 0\lra M_1\lra  X \lra M_2\lra 0\\
&\qquad\qquad\qquad\qquad\quad\mbox{in } A\modcat \mbox{ with }M_1 \in \M_1\mbox{ and }M_2 \in \M_2\})\\
&=\{X\in A\modcat \mid \mbox{there exists an exact sequence } 0\lra M_1\lra  X\oplus X' \lra M_2\lra 0\\
&\qquad\qquad\qquad\quad\mbox{ in } A\modcat \mbox{ for some } A\mbox{-module } X' \mbox{ with }M_1 \in \M_1\mbox{ and }M_2 \in \M_2\}.
\end{align*}
The operation $\bullet$ is associative. Inductively, define
$$\M_{1}\bullet  \M_{2}\bullet \dots \bullet\M_{n}:=\add(\{X\in A\modcat \mid \mbox{there exists an exact sequence }
0\ra M_1\ra  X \ra M_2\ra 0$$
$$\mbox{in } A\modcat \mbox{ with }M_1 \in \M_1\mbox{ and }M_2 \in \M_{2}\bullet \dots \bullet\M_{n}\}).$$
Thus $X\in \M_{1}\bullet  \M_{2}\bullet \dots \bullet\M_{n}$ if and only if there exist the following exact sequences
\begin{equation*}
\begin{cases}
\xymatrix@C=1.5em@R=0.1em{
0\ar[r]& M_1 \ar[r]& X\oplus X_1' \ar[r]& X_2 \ar[r]& 0,\\
0\ar[r]& M_2 \ar[r]& X_2\oplus X_2' \ar[r]& X_3 \ar[r]& 0,\\
&&\vdots&&\\
0\ar[r]& M_{n-1} \ar[r]& X_{n-1}\oplus X_{n-1}' \ar[r]& X_n \ar[r]& 0,}
\end{cases}
\end{equation*}
for some $A$-modules $X_i'$ such that $M_i\in \M_i$ and $M_{i+1}\in \M_{i+1}\bullet \M_{i+2}\bullet \cdots \bullet \M_n$ for $1\le i\le n-1$.

For a subcategory $\M$ of $A\modcat$, set $[\M]_{0}:=\{0\}$, $[\M]_{1}:=\add(\M)$,  $[\M]_{n}=[\M]_1\bullet [\M]_{n-1}$ for any $n\ge 2$. If $M\in A\modcat$, we write $[M]_{n}$ instead of $[\{M\}]_{n}$.

\begin{Lem}\label{lem-ex-plus}
{\rm (\cite[Corollary 2.3(1)]{zheng2020})} For each $M,N\in A\modcat$ and $m,n\in \mathbb{Z}$, we have $$[M]_m\bullet [N]_n\subseteq [M\oplus N]_{m+n}.$$
\end{Lem}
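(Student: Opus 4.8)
The claim is that $[M]_m \bullet [N]_n \subseteq [M \oplus N]_{m+n}$. The plan is to first observe that both $[M]_m$ and $[N]_n$ are contained in $[M \oplus N]_m$ and $[M \oplus N]_n$ respectively, since $\add(M) \subseteq \add(M \oplus N)$ and the operation $\bullet$ is monotone in each argument (if $\M_i \subseteq \M_i'$ then $\M_1 \bullet \cdots \bullet \M_n \subseteq \M_1' \bullet \cdots \bullet \M_n'$, which is immediate from the definition via exact sequences). So it suffices to prove the special case
\[
[L]_m \bullet [L]_n \subseteq [L]_{m+n}
\]
for a single module $L$, applied with $L = M \oplus N$. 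This reduces everything to understanding how the iterated $\bullet$-construction composes.

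The core of the argument is the identity $[L]_m \bullet [L]_n = [L]_{m+n}$, or at least the inclusion $\subseteq$, which I would prove by induction on $m$. The base case $m = 0$ is trivial since $[L]_0 = \{0\}$ and $\{0\} \bullet [L]_n = [L]_n$; the base case $m = 1$ is essentially the definition, since $[L]_{n+1} = [L]_1 \bullet [L]_n$. For the inductive step, use associativity of $\bullet$ (stated in the excerpt) to write
\[
[L]_m \bullet [L]_n = \big([L]_1 \bullet [L]_{m-1}\big) \bullet [L]_n = [L]_1 \bullet \big([L]_{m-1} \bullet [L]_n\big) \subseteq [L]_1 \bullet [L]_{m-1+n} = [L]_{m+n},
\]
where the inclusion uses the induction hypothesis together with monotonicity of $\bullet$ in its second argument. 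This closes the induction.

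Assembling the pieces: given $X \in [M]_m \bullet [N]_n$, monotonicity gives $X \in [M \oplus N]_m \bullet [M \oplus N]_n$, and the displayed identity with $L = M \oplus N$ gives $X \in [M \oplus N]_{m+n}$, as desired. The only genuinely delicate point is making sure the monotonicity property of $\bullet$ and the unpacking of $[L]_{m+n}$ as an iterated $\bullet$ are handled correctly with the $\add(-)$-closure built into the definition; but since $[L]_1 = \add(L)$ is already closed under summands and the $\bullet$-operation reincorporates $\add$ at each stage, no extra care beyond bookkeeping is needed. In fact, since this is cited as \cite[Corollary 2.3(1)]{zheng2020}, one could alternatively just invoke the cited source, but the short inductive argument above is self-contained.
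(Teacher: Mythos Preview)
Your argument is correct. The paper does not supply its own proof of this lemma; it simply cites \cite[Corollary 2.3(1)]{zheng2020}, so there is nothing to compare against beyond noting that your self-contained inductive argument (monotonicity of $\bullet$ plus associativity to reduce to $[L]_m\bullet[L]_n\subseteq[L]_{m+n}$) is exactly the standard justification behind the cited result.
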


\begin{Lem}\label{lem-shift-module} 
Let $A$ be an Artin algebra.

{\rm (1) (see \cite[Lemma 4.6]{zheng22})} Given the exact sequence $0 \ra X\ra Y\ra Z\ra 0$
in $A\modcat$, we can get the following exact sequences
$$0 \lra \Omega^{i+1}(Z) \lra \Omega^{i}(X)\oplus P_{i}\lra \Omega^{i}(Y)\lra 0,$$
$$0 \lra \Omega^{i}(X) \lra \Omega^{i}(Y)\oplus Q_{i}\lra \Omega^{i}(Z)\lra 0,$$
for some projective modules $P_{i}$ and $Q_i$ in $A\modcat$, where $i\ge 0.$

{\rm (2)} Given the exact sequence $0 \ra X\ra Y\ra Z\ra 0$
in $A\modcat$, we can get the following exact sequences
$$0 \lra \Omega^{-i}(Y) \lra \Omega^{-i}(Z)\oplus I^{i}\lra \Omega^{-i-1}(X)\lra 0,$$
$$0 \lra \Omega^{-i}(X) \lra \Omega^{-i}(Y)\oplus J^{i}\lra \Omega^{-i}(Z)\lra 0$$
for some injective modules $I^{i}$ and $J^{i}$ in $A\modcat$, where $i\ge 0.$
\end{Lem}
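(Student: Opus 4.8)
Lemma \ref{lem-shift-module}(2) is the injective analogue of part (1), so the plan is to dualize the proof of part (1) rather than to argue from scratch. First I would recall that the duality $D=\Hom_k(-,E(k/\rad(k)))\colon A\modcat\to A\opp\modcat$ is an exact contravariant equivalence that exchanges projectives and injectives, hence exchanges syzygies and cosyzygies: for every $M\in A\modcat$ and every $i\ge 0$ there is a natural isomorphism $D(\Omega^{-i}_A(M))\cong \Omega^{i}_{A\opp}(DM)$, up to projective summands. I would state this compatibility precisely (it follows because $D$ sends an injective envelope $M\hookrightarrow I$ to a projective cover $DI\twoheadrightarrow DM$, and $D$ is exact so it sends the cosyzygy cokernel to the syzygy kernel), since it is the bridge that turns the two exact sequences produced by part (1) over $A\opp$ into the two desired exact sequences over $A$.

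Concretely, given the short exact sequence $0\to X\to Y\to Z\to 0$ in $A\modcat$, applying $D$ yields a short exact sequence $0\to DZ\to DY\to DX\to 0$ in $A\opp\modcat$. Invoking Lemma \ref{lem-shift-module}(1) over the algebra $A\opp$ with this sequence gives, for each $i\ge 0$, exact sequences
\begin{align*}
0&\lra \Omega^{i+1}_{A\opp}(DX)\lra \Omega^{i}_{A\opp}(DZ)\oplus P_i\lra \Omega^{i}_{A\opp}(DY)\lra 0,\\
0&\lra \Omega^{i}_{A\opp}(DZ)\lra \Omega^{i}_{A\opp}(DY)\oplus Q_i\lra \Omega^{i}_{A\opp}(DX)\lra 0,
\end{align*}
with $P_i,Q_i$ projective over $A\opp$. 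Applying $D$ again (which is exact and contravariant, so it reverses arrows and preserves short exactness) and using $D\Omega^{i}_{A\opp}(DM)\cong \Omega^{-i}_A(M)$ together with $D(\pmodcat{A\opp})=\imodcat{A}$, the first sequence becomes $0\to \Omega^{-i}_A(Y)\to \Omega^{-i}_A(Z)\oplus I^{i}\to \Omega^{-i-1}_A(X)\to 0$ and the second becomes $0\to \Omega^{-i}_A(X)\to \Omega^{-i}_A(Y)\oplus J^{i}\to \Omega^{-i}_A(Z)\to 0$, with $I^i=D(P_i)$ and $J^i=D(Q_i)$ injective in $A\modcat$. This is exactly the claim.

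The only real subtlety — and the step I would treat most carefully — is the identification $D\Omega^{i}_{A\opp}(DM)\cong \Omega^{-i}_A(M)$ \emph{as used inside these sequences}, i.e. making sure the isomorphisms are compatible with the maps appearing in the sequences, not just abstract isomorphisms of modules. Since $\Omega$ and $\Omega^{-1}$ are only well-defined up to projective/injective summands, one should either phrase everything in the stable/costable category or carry along the ambiguity explicitly, absorbing any spurious injective summands into the $I^i$ and $J^i$ (this is precisely why part (1) already carries the ``$\oplus P_i$'' and ``$\oplus Q_i$'' terms, and why we are free to do the same here). A clean way to avoid bookkeeping is to prove a standalone sub-lemma: for any exact sequence $0\to U\to V\to W\to 0$ of $A\opp$-modules, $D$ of the two syzygy sequences from Lemma \ref{lem-shift-module}(1) gives cosyzygy sequences of the stated shape over $A$; then specialize to $U=DZ$, $V=DY$, $W=DX$. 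No deeper input is needed — duality plus part (1) suffices — so the proof is short once the syzygy/cosyzygy exchange under $D$ is pinned down.
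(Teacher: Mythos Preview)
The paper does not actually supply a proof of this lemma: part (1) is quoted from \cite[Lemma 4.6]{zheng22} and part (2) is stated without argument, the implicit claim being that it is the obvious dual. Your proposal fills in precisely that duality argument and is correct; the identification $D\Omega^{i}_{A\opp}(DM)\cong \Omega^{-i}_A(M)$ even holds on the nose (not merely up to injective summands) because $D$ carries projective covers to injective envelopes, so the bookkeeping you flag as a subtlety is in fact trivial here.
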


\begin{Lem}\label{lem-wrd} 
{\rm(\cite[Lemma 3.5]{zh22})}
Let $A$ be an Artin algebra.

{\rm (1)} Let $0\ra X\ra M^0\ra M^1 \cdots \ra M^s\ra 0$ be an exact sequence in $A\modcat$. Then
$$X\in [\Omega^{n}(M^s)]_1 \bullet [\Omega^{n-1}(M^{s-1})]_1
\bullet\cdots\bullet [\Omega^{1}(M^{1})]_1
\bullet [M^0]_1 
\subseteq [\bigoplus_{i=0}^s\Omega^{i}(M^i)]_{s+1}.$$

{\rm (2)} Let $0\ra M_{s}\ra M_{s-1}\ra \cdots\ra M_0\ra X\ra 0$ be an exact sequence in $A\modcat$. Then
$$X\in [M_0]_1\bullet[\Omega^{-1}(M_{1})]_1
\bullet\cdots\bullet
[\Omega^{-s+1}(M_{s-1})]_1\bullet [\Omega^{-s}(M_s)]_1
\subseteq [\bigoplus_{i=0}^s\Omega^{-i}(M_i)]_{s+1}.$$
\end{Lem}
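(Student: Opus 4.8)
The plan is to prove both parts by the same two-step scheme. First, slice the given long exact sequence into short exact sequences. Second, walk syzygies (in part~(1)) or cosyzygies (in part~(2)) down this chain using Lemma \ref{lem-shift-module}, absorbing the projective/injective error terms and all the intermediate cycle modules into a steadily growing iterated $\bullet$-product; the final passage to a single bracket $[\bigoplus_i \Omega^{\pm i}(M_i)]_{s+1}$ is then formal, via Lemma \ref{lem-ex-plus}. We may assume $s\ge 1$, the case $s=0$ being trivial, and we read the exponents in part~(1) as $s, s-1,\dots,1,0$ (i.e.\ $n=s$, with $\Omega^{0}=\Id$).

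\textbf{Part (1).} Put $K^0:=X$ and, for $1\le i\le s$, $K^i:=\mathrm{Im}(M^{i-1}\to M^i)$, so that $K^s=M^s$ and one has short exact sequences $0\to K^i\to M^i\to K^{i+1}\to 0$ for $0\le i\le s-1$. The claim is that, by downward induction on $j$ from $s-1$ to $0$,
$$\Omega^j(K^j)\in [\Omega^s(M^s)]_1\bullet[\Omega^{s-1}(M^{s-1})]_1\bullet\cdots\bullet[\Omega^j(M^j)]_1=:\mathcal{W}_j.$$
Indeed, applying Lemma \ref{lem-shift-module}(1) to $0\to K^j\to M^j\to K^{j+1}\to 0$ with index $i=j$ gives an exact sequence $0\to\Omega^{j+1}(K^{j+1})\to\Omega^j(K^j)\oplus P_j\to\Omega^j(M^j)\to 0$ with $P_j$ projective, so $\Omega^j(K^j)\in[\Omega^{j+1}(K^{j+1})]_1\bullet[\Omega^j(M^j)]_1$ (the summand $P_j$ occupying the auxiliary-summand slot in the definition of $\bullet$). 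For $j=s-1$ this already reads $\Omega^{s-1}(K^{s-1})\in\mathcal{W}_{s-1}$, since $K^s=M^s$. For $j<s-1$, the induction hypothesis gives $\Omega^{j+1}(K^{j+1})\in\mathcal{W}_{j+1}$; as an iterated $\bullet$-product is $\add$-closed, $[\Omega^{j+1}(K^{j+1})]_1\subseteq\mathcal{W}_{j+1}$, and associativity of $\bullet$ yields $\Omega^j(K^j)\in\mathcal{W}_{j+1}\bullet[\Omega^j(M^j)]_1=\mathcal{W}_j$. At $j=0$ this is $X=K^0\in\mathcal{W}_0$, the claimed middle term, and an $s$-fold use of Lemma \ref{lem-ex-plus} gives $\mathcal{W}_0\subseteq[\bigoplus_{i=0}^s\Omega^i(M^i)]_{s+1}$.

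\textbf{Part (2).} This is dual, with cosyzygies in place of syzygies. Put $L_0:=X$ and $L_i:=\mathrm{Im}(M_i\to M_{i-1})$ for $1\le i\le s$, so $L_s=M_s$ and $0\to L_{i+1}\to M_i\to L_i\to 0$ is exact for $0\le i\le s-1$. Applying Lemma \ref{lem-shift-module}(2) to this sequence with index $i$ gives $0\to\Omega^{-i}(M_i)\to\Omega^{-i}(L_i)\oplus I^i\to\Omega^{-i-1}(L_{i+1})\to 0$ with $I^i$ injective, hence $\Omega^{-i}(L_i)\in[\Omega^{-i}(M_i)]_1\bullet[\Omega^{-i-1}(L_{i+1})]_1$. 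Running the same downward induction on $i$ from $s-1$ to $0$ (base case via $L_s=M_s$, inductive step via $\add$-closure and associativity of $\bullet$) produces $\Omega^{-i}(L_i)\in\mathcal{V}_i$, where $\mathcal{V}_i:=[\Omega^{-i}(M_i)]_1\bullet[\Omega^{-(i+1)}(M_{i+1})]_1\bullet\cdots\bullet[\Omega^{-s}(M_s)]_1$. At $i=0$ this is the asserted membership of $X$, and Lemma \ref{lem-ex-plus} finishes exactly as in part~(1).

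The argument is essentially bookkeeping; the one step that genuinely needs care is the repeated ``plugging-in'': replacing $[\Omega^{j+1}(K^{j+1})]_1$ by the wider class $\mathcal{W}_{j+1}$ (resp.\ $[\Omega^{-(i+1)}(L_{i+1})]_1$ by $\mathcal{V}_{i+1}$). This is legitimate precisely because an iterated $\bullet$-product is defined as an $\add$-closure, so it contains $\add$ of any single one of its members, and because $\bullet$ is associative (and obviously monotone in each slot). A secondary, routine point is that the (co)syzygies of modules in $A\modcat$ again lie in $A\modcat$ ($A$ being Artinian), and that the projective, resp.\ injective, terms thrown off by Lemma \ref{lem-shift-module} are harmlessly absorbed into the auxiliary-summand form of the definition of $\bullet$.
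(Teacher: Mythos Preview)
Your proof is correct. Note that the paper does not actually supply its own proof of this lemma: it is quoted verbatim from \cite[Lemma 3.5]{zh22} and left unproved. Your argument---slicing the long exact sequence into short ones, then using the first sequence of Lemma~\ref{lem-shift-module}(1) (respectively the first sequence of Lemma~\ref{lem-shift-module}(2)) to propagate (co)syzygies through an induction that builds the iterated $\bullet$-product, with Lemma~\ref{lem-ex-plus} collapsing the product at the end---is the standard route and is exactly how one would expect the cited reference to proceed.
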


\begin{Lem}\label{lem-ext-pi} 
Let $A$ be an Artin algebra and $n,s\ge 0$. Let $M$ and $X$ be  $A$-modules.

$(1)$ If $X\in [M]_{n+1}$, then $\Omega^{s}(X) \in [\Omega^{s}(M)]_{n+1}$ and $\Omega^{-s}(X) \in [\Omega^{-s}(M)]_{n+1}$.

$(2)$ If $\Omega^{-s}(X) \in [M]_{n+1}$, then $X \in [\Omega^{s}(M)\oplus\big(\bigoplus_{i=0}^{s-1}\Omega^{i}(D(A))\big)]_{n+s+1}$.

$(3)$ If $\Omega^{s}(X) \in [M]_{n+1}$, then $X \in [\Omega^{-s}(M)\oplus\big(\bigoplus_{i=0}^{s-1}\Omega^{-i}(A)\big)]_{n+s+1}$.
\end{Lem}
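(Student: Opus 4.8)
The plan is to prove the three parts of Lemma~\ref{lem-ext-pi} in order, using the syzygy/cosyzygy manipulations from Lemma~\ref{lem-shift-module} together with the additive closure properties of the operation $\bullet$. For part $(1)$, I would argue by induction on $n$. The base case $n=0$ is trivial since $[M]_1 = \add(M)$ and $\Omega^s$ (resp. $\Omega^{-s}$) commutes with finite direct sums and sends summands to summands up to adding projectives (resp. injectives); one has to be slightly careful, but since $\Omega^s(P)=0$ for projective $P$ the functor $\Omega^s$ really does send $\add(M)$ into $\add(\Omega^s(M))$, and dually for $\Omega^{-s}$. For the inductive step, if $X \in [M]_{n+1} = [M]_1 \bullet [M]_n$, pick an exact sequence $0 \to M_1 \to X \oplus X' \to X_2 \to 0$ with $M_1 \in \add(M)$ and $X_2 \in [M]_n$. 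Applying Lemma~\ref{lem-shift-module}(1) gives an exact sequence $0 \to \Omega^s(M_1) \to \Omega^s(X\oplus X') \oplus Q \to \Omega^s(X_2) \to 0$ with $Q$ projective, hence $\Omega^s(X) \oplus (\text{summand}) \in \add(\Omega^s(M)) \bullet [\Omega^s(M)]_n = [\Omega^s(M)]_{n+1}$ by the induction hypothesis applied to $\Omega^s(X_2) \in [\Omega^s(M)]_n$. The $\Omega^{-s}$ statement is identical using Lemma~\ref{lem-shift-module}(2) and adding injectives instead.

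For part $(2)$, the idea is to reduce from $\Omega^{-s}(X)$ back to $X$ one cosyzygy at a time, paying a ``correction term'' built out of injective modules at each step. Concretely, for any module $Y$ there is an exact sequence $0 \to Y \to I \to \Omega^{-1}(Y) \to 0$ with $I$ injective, and $I \in \add(D(A))$. So if $\Omega^{-1}(Y) \in [M]_{n+1}$, then combining this short exact sequence with Lemma~\ref{lem-wrd}(2) (or directly: $Y \in [I]_1 \bullet [\Omega^{-1}(\Omega^{-1}(Y))]_1$... no — more simply, from $0\to Y\to I\to \Omega^{-1}(Y)\to 0$ we get $Y \in [\Omega^{-1}(\Omega^{-1}(Y))\oplus ?]$...). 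The cleaner route: apply part $(1)$ to push everything up. Since $\Omega^{-s}(X)\in[M]_{n+1}$, part $(1)$ gives $X = \Omega^{s}(\Omega^{-s}(X))$ up to... no, that is false in general because $\Omega^s\Omega^{-s}$ is not the identity. Instead I would induct on $s$: for $s=1$, the sequence $0 \to X \to I \to \Omega^{-1}(X) \to 0$ with $I\in\add(D(A))$, together with $\Omega^{-1}(X) \in [M]_{n+1}$, yields via the dual of Lemma~\ref{lem-wrd}(1)-type reasoning that $X \in [D(A)]_1 \bullet [\Omega^{-1}(\Omega^{-1}(X))]_1$ — again not quite. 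The correct mechanism is: from $0\to X\to I\to \Omega^{-1}(X)\to 0$, apply $\Omega$ to get $0 \to \Omega(\Omega^{-1}(X)) \to X \oplus P \to I/(\text{stuff}) \to 0$; but $\Omega(\Omega^{-1}(X))$ is a summand of $X$ plus projectives only when... Let me instead use Lemma~\ref{lem-shift-module}(1) directly on $0 \to X \to I \to \Omega^{-1}(X)\to 0$ with $i=0$: this gives $0 \to \Omega(\Omega^{-1}(X)) \to X \oplus P_0 \to I \to 0$, so $X \in [\Omega(\Omega^{-1}(X))]_1 \bullet [I]_1 \subseteq [\Omega(M)]_{n+1} \bullet [D(A)]_1 \subseteq [\Omega(M)\oplus D(A)]_{n+2}$ using part $(1)$ on $\Omega^{-1}(X)\in[M]_{n+1}$ (so $\Omega(\Omega^{-1}(X))\in[\Omega(M)]_{n+1}$, noting $\Omega$ applied to an element of $[M]_{n+1}$ lands in $[\Omega M]_{n+1}$, which is exactly part $(1)$ again — here the roles force $s=1$). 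Then iterate: replacing $X$ by $\Omega^{-(s-1)}(X)$ — wait, one must set it up so the induction hypothesis at level $s-1$ applies to $\Omega^{-1}(X)$ with module $M$, giving $\Omega^{-1}(X)$... no, we need $\Omega^{-(s-1)}(\Omega^{-1}(X)) = \Omega^{-s}(X) \in [M]_{n+1}$, so the hypothesis at $s-1$ gives $\Omega^{-1}(X) \in [\Omega^{s-1}(M) \oplus \bigoplus_{i=0}^{s-2}\Omega^i(D(A))]_{n+s}$; then one more application of the $s=1$ case with this new ``$M$'' gives $X \in [\Omega^s(M) \oplus \bigoplus_{i=1}^{s-1}\Omega^i(D(A)) \oplus D(A)]_{n+s+1} = [\Omega^s(M)\oplus\bigoplus_{i=0}^{s-1}\Omega^i(D(A))]_{n+s+1}$, as claimed. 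Part $(3)$ is entirely dual, using $0 \to \Omega(X) \to P \to X \to 0$ with $P \in \add(A)$, Lemma~\ref{lem-shift-module}(2) to move across, and the dual of part $(1)$.

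The main obstacle, and the only genuinely delicate point, is bookkeeping the projective/injective correction terms and the additive closures so that the index arithmetic comes out to exactly $n+s+1$ rather than something larger: each of the $s$ inductive steps costs one extra ``slot'' in the $\bullet$-filtration (contributing the $+s$) on top of the starting $n+1$, and one must invoke Lemma~\ref{lem-ex-plus} to absorb the separately-accumulated $\Omega^i(D(A))$ summands into a single direct sum without inflating the length. I would make sure that applying $\Omega$ to a module in $[M]_{n+1}$ — which is the repeated use of part $(1)$ inside the proofs of $(2)$ and $(3)$ — keeps the length at $n+1$ and does not add to it, which is precisely the content of part $(1)$ and is why the parts must be proved in this order. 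Everything else is a routine assembly of short exact sequences and applications of Lemmas~\ref{lem-ex-plus}, \ref{lem-shift-module}, and \ref{lem-wrd}.
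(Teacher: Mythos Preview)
Your approach to part~(1) is correct and essentially identical to the paper's: both unroll the $[M]_{n+1}$-filtration into a chain of short exact sequences and apply Lemma~\ref{lem-shift-module} termwise; whether you phrase this as induction on $n$ or write out all $n$ sequences at once is immaterial.

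For parts~(2) and~(3) your inductive argument on $s$ is valid, but the paper takes a different and shorter route. Rather than peeling off one cosyzygy at a time, the paper writes down the single long exact sequence
\[
0 \lra X \lra I^0 \lra I^1 \lra \cdots \lra I^{s-1} \lra \Omega^{-s}(X) \lra 0
\]
with each $I^i \in \add(D(A))$, and then invokes Lemma~\ref{lem-wrd}(1) in one shot to obtain
\[
X \in [\Omega^{s}(\Omega^{-s}(X))]_1 \bullet [\Omega^{s-1}(I^{s-1})]_1 \bullet \cdots \bullet [\Omega^{1}(I^{1})]_1 \bullet [I^0]_1.
\]
Since $\Omega^{-s}(X)\in[M]_{n+1}$, part~(1) gives $\Omega^{s}(\Omega^{-s}(X))\in[\Omega^{s}(M)]_{n+1}$, so the first factor sits inside $[\Omega^s(M)]_{n+1}$, and Lemma~\ref{lem-ex-plus} collapses the remaining $s$ factors into $[\bigoplus_{i=0}^{s-1}\Omega^i(D(A))]_s$, yielding the claimed bound $n+s+1$ directly. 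Part~(3) is handled dually via Lemma~\ref{lem-wrd}(2). Your induction reproduces this count exactly --- each of the $s$ steps adds one to the filtration length and contributes one $\Omega^i(D(A))$ --- but the paper's method avoids the recursive unpacking and the need to track how $\Omega$ interacts with the accumulating direct sum at each stage. In effect, Lemma~\ref{lem-wrd} already packages the induction you are performing by hand.
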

\begin{proof}

(1) It follows from $X\in [M]_{n+1}$ that there are short exact sequences
\begin{equation*}
\begin{cases}
\xymatrix@C=1.5em@R=0.1em{
0\ar[r]& M_0\ar[r] & X\oplus X_{0}'\ar[r]& X_{1}\ar[r]&0,\\
0\ar[r]& M_1\ar[r] & X_{1}\oplus X_{1}'\ar[r]& X_{2}\ar[r]&0,\\
&&\vdots&&\\
0\ar[r]& M_{n-1}\ar[r] & X_{n-1}\oplus X_{n-1}'\ar[r]& X_{n}\ar[r]&0.
}
\end{cases}
\end{equation*}
in $A\modcat$ for some $A$-modules $X_i'$ such that $M_i\in [M]_1$ and $X_{i+1}\in [M]_{n-i}$ for each $0\le i\le n-1$.
By Lemma \ref{lem-shift-module}(1), there are short exact sequences
\begin{equation*}
\begin{cases}
\xymatrix@C=1.5em@R=0.1em{
0\ar[r]& \Omega^{s}(M_0)\ar[r] & \Omega^{s}(X)\oplus \Omega^{s}(X_{0}')\oplus P_0\ar[r]& \Omega^{s}(X_{1})\ar[r]&0,\\
0\ar[r]& \Omega^{s}(M_1)\ar[r] & \Omega^{s}(X_{1})\oplus \Omega^{s}(X_{1}')\oplus P_1\ar[r]& \Omega^{s}(X_{2})\ar[r]&0,\\
&&\vdots&&\\
0\ar[r]& \Omega^{s}(M_{n-1})\ar[r] & \Omega^{s}(X_{n-1})\oplus \Omega^{s}(X_{n-1}')\oplus P_{n-1}\ar[r]& \Omega^{s}(X_{n})\ar[r]&0.
}
\end{cases}
\end{equation*}
for some projective module $P_{i}$ in $A\modcat$. Thus $\Omega^{s}(X) \in [\Omega^{s}(M)]_{n+1}$. Similarly, we get $\Omega^{-s}(X) \in [\Omega^{-s}(M)]_{n+1}$.

(2) We have a long exact sequence $0\ra X\ra I^0\ra I^1\ra \cdots \ra I^{s-1}\ra \Omega^{-s}(X) \ra 0$ with $I^i\in \add(D(A))$. Then
\begin{align*}
X&\in  [\Omega^{s}( \Omega^{-s}(X))]_1 \bullet [\Omega^{s-1}(I^{s-1})]_1
\bullet\cdots\bullet [\Omega^{1}(I^{1})]_1
\bullet [I^0]_1 \qquad \text{(by Lemma \ref{lem-wrd}(1))}\\
&\subseteq  [\Omega^{s}(M)]_{n+1} \bullet [\Omega^{s-1}(I^{s-1})]_1
\bullet\cdots\bullet [\Omega^{1}(I^{1})]_1
\bullet [I^0]_1 \qquad \text{(by (1))}\\
&\subseteq [\Omega^{s}(M)\oplus\big(\bigoplus_{i=0}^{s-1}\Omega^{i}(D(A))\big)]_{n+s+1}.
\end{align*}

By Lemma \ref{lem-wrd}(1), we get
$$X\in  [\Omega^{s}( \Omega^{-s}(X))]_1 \bullet [\Omega^{s-1}(I^{s-1})]_1
\bullet\cdots\bullet [\Omega^{1}(I^{1})]_1
\bullet [I^0]_1 \subseteq [\bigoplus_{i=0}^n\Omega^{i}(I^i)]_{s+1}.$$

(3) It can be proved by a similar argument as in  (2).
\end{proof}

Let $A$ be an Artin algebra and $A\modcat$ be the category of finitely generated left $A$-modules. An $A$-module $K$ is called an $n$\textit{-th syzygy module} ($n>0$) if there is an exact sequence of $A$-modules: $0\ra X\ra P_0\ra P_1\ra \cdots \ra P_{n-1}\ra Y\ra 0$ for some $A$-module $Y$ with $P_i$ projective; $X$ is called an $\infty$\textit{-th syzygy module} if there is an exact sequence of $A$-modules: $0\ra K\ra P_0\ra P_1\ra P_2 \cdots $ with $P_i$ projective. Denoted by $\Omega^{n}(A\modcat)$
the full subcategory of $A\modcat$
consisting of all $n$-th syzygies $A$-modules and by $\Omega^{\infty}(A\modcat)$
the full subcategory of $A\modcat$
consisting of all $\infty$-th syzygies $A$-modules.
For convenience, set $\Omega^{0}(A\modcat):=A\modcat$. Clearly, for nonnagative $n$, we have
$$\Omega^{n}(A\modcat)=\{K\oplus P\in A\modcat \mid K\cong\Omega^{n}(M) \text{ for some }A\text{-module } M\text{ and } P\in\pmodcat{A}\}.$$

\begin{Def}\label{ed-def}
Let $A$ be an Artin algebra and $\mathcal{A}$ be a subcategory of $A\modcat$.
The extension dimension of $\mathcal{A}$, denoted by $\ed(\mathcal{A})$, is defined to be
$$\ed(\mathcal{A})=\inf\{m\geq  0\mid \mathcal{A}\subseteq [M]_{m+1} \text{ for some }  M\in A\modcat\}.$$
The extension dimension of the algebra $A$, denoted by $\ed(A)$, is defined to be the extension dimension of the category $A\modcat$. That is,
\begin{align*}
 \ed(A):= & \ed(A\modcat )\\
  =& \inf\{m\geq  0 \mid A\modcat\subseteq [M]_{m+1} \text{ for some } M\in A\modcat\} \\
  =& \inf\{m\geq  0\mid A\modcat=[M]_{m+1}\mbox{ for some } M\in A\modcat\}.
\end{align*}
\end{Def}
Clearly, if $\mathcal{A}\subseteq \mathcal{B}\subseteq A\modcat$, then
$\ed(\mathcal{A}) \le \ed(\mathcal{B}).$ In particular, $$\ed(\Omega^{\infty}(A\modcat))\le \ed(\Omega^{p+1}(A\modcat))\le \ed(\Omega^p(A\modcat)) \text{ for any } p\ge 0.$$ By \cite[Example 1.6]{b08}, $\ed(A)\le \ell\ell(A)-1$, where $\ell\ell(A)$ stands for the Loewy length of $A$. This implies $\ed(\Omega^{i}(A\modcat))=\ed(\Omega^{i+1}(A\modcat))$ for sufficiently large $i$. Then the \emph{$\Omega$-extension dimension} of algebra $A$ is defined to be
$$\oed(A):=\min\{\ed(\Omega^{i}(A\modcat))\mid i\ge 0\}.$$
A natural question is whether $\ed(\Omega^{\infty}(A\modcat))$ is equal to $\oed(A)$. A counterexample is provided in \cite[Example 2.19]{z2024}.

\subsection{Syzygies in derived categories}\label{sec-syzygy}
In this subsection, we recall some basic facts on syzygy complexes and cosyzygy complexes for later use, as detailed in reference \cite{ai07,wei17}.

Let $A$ be an Artin algebra.
A homomorphism $f: X\ra Y$ of $A$-modules is called a \emph{radical homomorphism} if, for any indecomposable module $Z$ and homomorphisms $h: Z\ra X$ and $g: Y\ra Z$, the composition $hfg$ is not an isomorphism.
For a complex $(X^i, d_{\cpx{X}}^i)$ over $A\modcat$, if all $d_{\cpx{X}}^i$ are radical homomorphisms, then it is called a \emph{radical complex}, which has the following properties.
\begin{Lem}\label{lem-radical}
{\rm (\cite[pp. 112-113]{hx10})}
Let $A$ be an Artin algebra.

$(1)$ Every complex over $A\modcat$ is isomorphic to a radical complex in $\K{A\modcat}$.

$(2)$ Two radical complexes $\cpx{X}$ and $\cpx{Y}$ are isomorphic in $\K{A\modcat}$ if and only if they are isomorphic in $\C{A\modcat}$.
\end{Lem}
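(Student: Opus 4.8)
The plan is to deduce both parts from the Krull--Schmidt structure of $A\modcat$, using two standard facts: the radical morphisms of $A\modcat$ form a two-sided ideal, so any composite having a radical factor is again radical; and for $M\in A\modcat$ the radical endomorphisms of $M$ constitute the Jacobson radical of the semiperfect ring $\End_A(M)$, so $\mathrm{id}_M+r$ is an automorphism whenever $r$ is radical.

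For part (1), the idea is to peel the contractible part off $\cpx{X}$ one differential at a time. The key input is the elementary fact that every morphism $f\colon M\to N$ in $A\modcat$ admits decompositions $M=M_0\oplus K$, $N=N_0\oplus K'$ together with an isomorphism $K\xrightarrow{\sim}K'$ such that $f=f_0\oplus(\text{this isomorphism})$ with $f_0\colon M_0\to N_0$ radical; this follows by writing $M$ and $N$ as finite direct sums of indecomposables, regarding $f$ as a matrix of morphisms, and repeatedly using an isomorphism entry as a pivot to clear its row and column, a process that terminates since there are only finitely many summands. Applying this to a differential $d_{\cpx{X}}^{n}$ gives $X^n=X^n_0\oplus K$ and $X^{n+1}=X^{n+1}_0\oplus K$ with $d_{\cpx{X}}^{n}|_K$ an isomorphism onto $K\subseteq X^{n+1}$ and $X^n_0\to X^{n+1}_0$ radical. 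The relations $d_{\cpx{X}}^{n-1}d_{\cpx{X}}^{n}=0$ and $d_{\cpx{X}}^{n}d_{\cpx{X}}^{n+1}=0$ then force the component of $d_{\cpx{X}}^{n-1}$ into $K\subseteq X^n$ and the restriction $d_{\cpx{X}}^{n+1}|_{K\subseteq X^{n+1}}$ to vanish, so $\cpx{X}$ decomposes in $\C{A\modcat}$ as the direct sum of the two-term contractible complex $K\xrightarrow{\sim}K$ in degrees $n,n+1$ and the complex obtained by replacing $X^n,X^{n+1}$ with $X^n_0,X^{n+1}_0$. To handle arbitrary (possibly unbounded) complexes without any convergence issue, I would carry out this splitting simultaneously for all even $n$ (whose degree pairs are disjoint) and afterwards for all odd $n$; each $X^p$ is then split into finitely many pieces, so the resulting decomposition $\cpx{X}\cong\cpx{X}_{\mathrm{rad}}\oplus(\text{a direct sum of two-term contractible complexes})$ takes place in $\C{A\modcat}$, with $\cpx{X}_{\mathrm{rad}}$ radical. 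As contractible complexes vanish in $\K{A\modcat}$, we get $\cpx{X}\cong\cpx{X}_{\mathrm{rad}}$ in $\K{A\modcat}$.

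For part (2), only the forward implication requires an argument. Suppose $\cpx{X}$ and $\cpx{Y}$ are radical complexes and $f\colon\cpx{X}\to\cpx{Y}$, $g\colon\cpx{Y}\to\cpx{X}$ are chain maps with $fg$ homotopic to $\mathrm{id}_{\cpx{X}}$ and $gf$ homotopic to $\mathrm{id}_{\cpx{Y}}$. Then the degree-$n$ component of $fg-\mathrm{id}_{\cpx{X}}$ is of the form $t^{n}d_{\cpx{X}}^{n-1}+d_{\cpx{X}}^{n}t^{n+1}$ for suitable $t^{n}\colon X^n\to X^{n-1}$; since $\cpx{X}$ is radical, each summand has a radical factor and is therefore radical, so $(fg)^{n}=\mathrm{id}_{X^n}+(\text{radical})$ is an automorphism of $X^n$. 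Hence $fg$, and by symmetry $gf$, are isomorphisms in $\C{A\modcat}$, so $f$ is simultaneously a split monomorphism and a split epimorphism in $\C{A\modcat}$ and thus an isomorphism there.

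The one genuinely delicate point is the unbounded case of part (1): one must be sure that the a priori infinitely many contractible direct summands can be removed all at once inside $\C{A\modcat}$, which is exactly what the even-then-odd two-step procedure secures. Everything else amounts to routine bookkeeping with Krull--Schmidt decompositions, and full details may be found in \cite{hx10}.
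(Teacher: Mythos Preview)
The paper does not actually prove this lemma; it merely states it with a citation to \cite[pp.~112--113]{hx10}, so there is no in-paper argument to compare against. Your proposal is correct and is essentially the standard Krull--Schmidt argument one finds in the cited reference: for part~(1) you split off the contractible summands coming from the isomorphism part of each differential, and for part~(2) you use that a homotopy on a radical complex produces only radical endomorphisms, whence $\mathrm{id}+(\text{radical})$ is invertible. The even-then-odd scheduling for the unbounded case is a clean way to avoid any transfinite or convergence issues, and the rest is routine.
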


Recall that $\Db{A\modcat}$ is equivalent to $\Kfb{\pmodcat{A}}$ as triangulated categories. For a complex $\cpx{X}\in \Db{A\modcat}$, a \textit{minimal projective resolution} of $\cpx{X}$ is a radical complex $\cpx{P}\in\Kfb{\pmodcat{A}}$ such that $\cpx{P}\simeq \cpx{X}$ in $\D{A\modcat}$. In case $X$ is an $A$-module, $\cpx{P}$ is just the usual minimal projective resolution of $X$. 

\begin{Def}{\rm (\cite[Section 1.3]{ai07})}
{\rm 
Let $\cpx{X}\in \Db{A\modcat}$ and $n\in \mathbb{Z}$. Let $\cpx{P}$ be a minimal projective resolution of $\cpx{X}$. We say that a complex in $\Db{A\modcat}$ is an $n$\textit{-th syzygy} of $\cpx{X}$ provided that it is isomorphic to $\cpx{P}_{\le -n}[-n]$ in $\D{A\modcat}$. In this case, the $n$-th syzygy of $\cpx{X}$ is denoted by $\Omega_{\mathscr{D}}^n(\cpx{X}_{\cpx{P}})$, or simply by $\Omega_{\mathscr{D}}^n(\cpx{X})$ if there is no danger of confusion.}
\end{Def}
\begin{Rem}
Given a complex $\cpx{X}\in \Db{A\modcat}$, the definition of the $n$-th syzygy of $\cpx{X}$  we adopt here differs slightly from Wei's definition(\cite{wei17}). We take the projective resolution of $\cpx{X}$ to be minimal. According to Lemma \ref{lem-radical}, we know that the $n$-th syzygy of $\cpx{X}$ is unique up to isomorphism.
\end{Rem}

The following list some basic properties of syzygy complexes in \cite{wei17}.
\begin{Lem}\label{lem-prop-omega}
{\rm (\cite[Lemma 3.3]{wei17})}
Let $\cpx{X}, \cpx{Y}\in\Db{A\modcat}$, and $m,n,s,t$ be integers. Let $\cpx{P}$ be the minimal projective resolutions of $\cpx{X}$. Then

{\rm (1)} $\Omega_{\mathscr{D}}^n(\cpx{X})\in \mathscr{D}^{(-\infty,0]}(A\modcat)$ and $\Hom_{\Db{A\modcat}}(Q,\Omega_{\mathscr{D}}^n(\cpx{X})[i])=0$ for any projective $A$-module $Q$ and any integer $i>0$.

{\rm (2)}
If $\cpx{X}\in \mathscr{D}^{[s,t]}(A\modcat)$ for some integers $s\le t$, then $\Omega_{\mathscr{D}}^n(\cpx{X})\in \mathscr{D}^{[0,0]}(A\modcat),$ $\mathscr{D}^{[s+n,0]}(A\modcat),$ $\mathscr{D}^{[s+n,t+n]}(A\modcat)$ for cases $n\ge -s$, $-t\le n\le -s$, $n\le -t$, respectively. In particular, $\Omega_{\mathscr{D}}^{-s}(\cpx{X})$ is isomorphic to an $A$-module and $\cpx{X}\simeq \Omega_{\mathscr{D}}^{-t}(\cpx{X})[-t]$.

{\rm (3)} $\Omega_{\mathscr{D}}^{n+m}(\cpx{X}[m])\simeq \Omega_{\mathscr{D}}^n(\cpx{X})$.

{\rm (4)} $\Omega_{\mathscr{D}}^{n+m}(\cpx{X})\simeq \Omega_{\mathscr{D}}^m(\Omega_{\mathscr{D}}^n(\cpx{X}))$ for $m\ge 0$.

%{\rm (5)} $\Omega_{\mathscr{D}}^n(\cpx{X})\oplus Q$ is also an $n$-th syzygy of $\cpx{X}$, where $Q$ is a projective module.
%
{\rm (5)}
$\cpx{X}\in \Kb{\pmodcat{A}}$ if and only if any/some syzygy of $\cpx{X}$ is also in $\Kb{\pmodcat{A}}$.

{\rm (6)} $\Omega_{\mathscr{D}}^n(\cpx{X}\oplus \cpx{Y})\simeq \Omega_{\mathscr{D}}^n(\cpx{X})\oplus \Omega_{\mathscr{D}}^n(\cpx{Y})$.
\end{Lem}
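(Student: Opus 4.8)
The plan is to reduce every assertion to an elementary manipulation of the minimal projective resolution $\cpx{P}\in\Kfb{\pmodcat{A}}$ of $\cpx{X}$, after recording a few preliminary facts. First, a minimal projective resolution is a radical complex, so by Lemma \ref{lem-radical} it is unique up to isomorphism in $\C{\pmodcat{A}}$; moreover any brutal truncation, any shift, and any finite direct sum of radical complexes of projectives is again a radical complex of projectives which, being bounded above with bounded cohomology, lies in $\Kfb{\pmodcat{A}}$ and is therefore the minimal projective resolution of the object of $\Db{A\modcat}$ that it represents. Second, if $\cpx{P}\in\Kfb{\pmodcat{A}}$ is radical then $P^i=0$ for all $i>\sup(\cpx{P})$: otherwise, letting $j>\sup(\cpx{P})$ be the largest degree with $P^j\ne 0$, the vanishing $H^j(\cpx{P})=0$ forces $d_{\cpx{P}}^{j-1}\colon P^{j-1}\to P^j$ to be a split epimorphism, and a section of it yields, for every indecomposable summand $Z$ of $P^j$, maps $Z\to P^{j-1}$ and $P^j\to Z$ whose composite with $d_{\cpx{P}}^{j-1}$ is $\Id_Z$, contradicting that $d_{\cpx{P}}^{j-1}$ is radical. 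Finally, for a projective $A$-module $Q$ and any $\cpx{Y}\in\Kfb{\pmodcat{A}}$ one has $\Hom_{\Db{A\modcat}}(Q,\cpx{Y}[i])\cong H^i(\Hom_A(Q,\cpx{Y}))\cong\Hom_A(Q,H^i(\cpx{Y}))$, using that $\Kfb{\pmodcat{A}}\to\Db{A\modcat}$ is an equivalence and that $\Hom_A(Q,-)$ is exact.

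With these facts, parts (1), (3) and (6) are immediate. For (1), $\Omega_{\mathscr{D}}^n(\cpx{X})=\cpx{P}_{\le -n}[-n]$ is concentrated in degrees $\le 0$, so it lies in $\mathscr{D}^{(-\infty,0]}(A\modcat)$, and the Hom-vanishing for $i>0$ follows from the last displayed isomorphism since $H^i(\Omega_{\mathscr{D}}^n(\cpx{X}))=0$ for $i>0$. For (3), a minimal projective resolution of $\cpx{X}[m]$ is $\cpx{P}[m]$, and unwinding the definitions of the brutal truncation and the shift gives $(\cpx{P}[m])_{\le -(n+m)}=(\cpx{P}_{\le -n})[m]$, hence $(\cpx{P}[m])_{\le -(n+m)}[-(n+m)]=\cpx{P}_{\le -n}[-n]$. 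For (6), $\cpx{P}\oplus\cpx{Q}$ is a minimal projective resolution of $\cpx{X}\oplus\cpx{Y}$ when $\cpx{Q}$ is one of $\cpx{Y}$, and brutal truncation and shift commute with finite direct sums.

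The substantive part is (2). Let $\cpx{P}$ be the minimal projective resolution of $\cpx{X}\in\mathscr{D}^{[s,t]}(A\modcat)$; by the second preliminary fact $P^i=0$ for $i>t$, and $H^i(\cpx{P})=0$ for $i\notin[s,t]$. I would distinguish three cases according to $-n$. If $-n\ge t$, then $\cpx{P}_{\le -n}=\cpx{P}$, so $\Omega_{\mathscr{D}}^n(\cpx{X})=\cpx{P}[-n]$ has cohomology concentrated in degrees $[s+n,t+n]$; taking $n=-t$ gives $\cpx{X}\simeq\Omega_{\mathscr{D}}^{-t}(\cpx{X})[-t]$. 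If $s\le -n\le t$, then $\cpx{P}_{\le -n}$ is concentrated in degrees $\le -n$ and satisfies $H^i(\cpx{P}_{\le -n})=H^i(\cpx{P})=0$ for $i<s$, so its cohomology lies in $[s,-n]$ and $\Omega_{\mathscr{D}}^n(\cpx{X})\in\mathscr{D}^{[s+n,0]}(A\modcat)$. If $-n\le s$, then $H^i(\cpx{P})=0$ for all $i<-n$, so $\cdots\to P^{-n-1}\to P^{-n}\to\Coker(d_{\cpx{P}}^{-n-1})\to 0$ is a projective resolution of the module $M:=\Coker(d_{\cpx{P}}^{-n-1})$ placed in degree $-n$; hence $\cpx{P}_{\le -n}\simeq M[n]$ in $\D{A\modcat}$ and $\Omega_{\mathscr{D}}^n(\cpx{X})\simeq M\in\mathscr{D}^{[0,0]}(A\modcat)$, which for $n=-s$ is exactly the asserted statement that $\Omega_{\mathscr{D}}^{-s}(\cpx{X})$ is isomorphic to an $A$-module.

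For (4) and (5) I would use that $\cpx{P}_{\le -n}[-n]$ is itself a radical complex of projectives, hence the minimal projective resolution of $\Omega_{\mathscr{D}}^n(\cpx{X})$. Then, since $m\ge 0$, $\Omega_{\mathscr{D}}^m(\Omega_{\mathscr{D}}^n(\cpx{X}))=(\cpx{P}_{\le -n}[-n])_{\le -m}[-m]$, and the truncation-and-shift bookkeeping (together with $-n-m\le -n$) rewrites this as $\cpx{P}_{\le -(n+m)}[-(n+m)]=\Omega_{\mathscr{D}}^{n+m}(\cpx{X})$, proving (4). For (5), $\cpx{X}\in\Kb{\pmodcat{A}}$ iff its minimal projective resolution $\cpx{P}$ is a bounded complex (a bounded-above radical complex of projectives that is perfect is already bounded, since an infinite projective tail cannot be split off), and $\cpx{P}$ is bounded iff the bounded-above complex $\cpx{P}_{\le -n}$ is bounded iff $\Omega_{\mathscr{D}}^n(\cpx{X})\in\Kb{\pmodcat{A}}$. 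The main obstacle is the careful treatment of (2): one must pin down how brutal truncation interacts with the minimality of $\cpx{P}$ — in particular that $\cpx{P}$ has no projective terms strictly above $\sup(\cpx{X})$ and that truncating it from above yields the minimal projective resolution of a module — while keeping the cohomological-degree and shift conventions consistent across the three cases; the rest is bookkeeping.
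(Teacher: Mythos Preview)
Your proof is correct. The paper does not give its own argument for this lemma: it simply records the statement and cites \cite[Lemma~3.3]{wei17}, so there is no in-paper proof to compare against. Your approach---reducing every assertion to the explicit description $\Omega_{\mathscr{D}}^n(\cpx{X})=\cpx{P}_{\le -n}[-n]$ with $\cpx{P}$ the minimal projective resolution, together with the observation that brutal truncations and shifts of a radical complex in $\Kfb{\pmodcat{A}}$ are again radical (hence minimal)---is exactly the natural direct verification, and your handling of the three cases in~(2), including the key fact that a radical $\cpx{P}\in\Kfb{\pmodcat{A}}$ has $P^i=0$ for $i>\sup(\cpx{P})$, is clean and accurate.
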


\begin{Lem}\label{lem-omega-shift}
{\rm (\cite[Proposition 3.8]{wei17})} Let $\cpx{X}\ra \cpx{Y}\ra \cpx{Z}\ra \cpx{X}[1]$ be a triangle in $\Db{A\modcat}$, and $n$ be an integer. Then there is a trianle $\Omega_{\mathscr{D}}^n(\cpx{X})\ra \Omega_{\mathscr{D}}^n(\cpx{Y})\oplus P\ra \Omega_{\mathscr{D}}^n(\cpx{Z})\ra \Omega_{\mathscr{D}}^n(\cpx{X})[1]$ for some projective $A$-module $P$.
\end{Lem}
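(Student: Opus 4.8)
The plan is to realise $\Omega_{\mathscr{D}}^n$ by brutally truncating minimal projective resolutions and to produce the triangle from a mapping cone. Fix minimal projective resolutions $\cpx{P}$ of $\cpx{X}$ and $\cpx{R}$ of $\cpx{Z}$ in $\Kfb{\pmodcat{A}}$. Rotating the given triangle yields a morphism $\cpx{Z}[-1]\ra \cpx{X}$ in $\Db{A\modcat}$; since bounded above complexes of projectives are $K$-projective, it is represented by a genuine chain map $f\colon \cpx{R}[-1]\ra \cpx{P}$, with ${\rm Con}(f)^i=R^i\oplus P^i$. The canonical termwise split short exact sequence of complexes
\[
0\lra \cpx{P}\lra {\rm Con}(f)\lra \cpx{R}\lra 0
\]
(inclusion $P^i\ra R^i\oplus P^i$, projection $R^i\oplus P^i\ra R^i$) represents the (rotated) mapping cone triangle $\cpx{P}\ra {\rm Con}(f)\ra \cpx{R}\ra \cpx{P}[1]$, which is isomorphic to the given triangle by the usual comparison of triangles; in particular ${\rm Con}(f)\simeq \cpx{Y}$ in $\Db{A\modcat}$.

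Next I would invoke the standard uniqueness of minimal projective resolutions: as a bounded above complex of projectives quasi-isomorphic to $\cpx{Y}$, the complex ${\rm Con}(f)$ splits in $\C{\pmodcat{A}}$ as ${\rm Con}(f)\cong \cpx{Q}\oplus C$, where $\cpx{Q}$ is the minimal (radical) projective resolution of $\cpx{Y}$ and $C$ is contractible, i.e. a direct sum of complexes $0\ra P_j\xrightarrow{1} P_j\ra 0$ concentrated in consecutive degrees $m_j,m_j+1$ (compare Lemma~\ref{lem-radical}). Now apply the brutal truncation $(-)_{\le -n}$ to the short exact sequence above: since truncation merely deletes the terms in degrees $>-n$, the inclusion and the projection still commute with the truncated differentials, so one obtains a termwise split short exact sequence $0\ra \cpx{P}_{\le -n}\ra {\rm Con}(f)_{\le -n}\ra \cpx{R}_{\le -n}\ra 0$. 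Shifting by $[-n]$ and passing to $\D{A\modcat}$ gives a triangle
\[
\cpx{P}_{\le -n}[-n]\lra {\rm Con}(f)_{\le -n}[-n]\lra \cpx{R}_{\le -n}[-n]\lra \cpx{P}_{\le -n}[-n][1].
\]
By definition $\cpx{P}_{\le -n}[-n]=\Omega_{\mathscr{D}}^n(\cpx{X})$ and $\cpx{R}_{\le -n}[-n]=\Omega_{\mathscr{D}}^n(\cpx{Z})$ (here minimality of $\cpx{P},\cpx{R}$ is used), while ${\rm Con}(f)_{\le -n}=\cpx{Q}_{\le -n}\oplus C_{\le -n}$ with $\cpx{Q}$ not necessarily minimal — this is the source of the correction term. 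Indeed, a summand $0\ra P_j\xrightarrow{1}P_j\ra 0$ of $C$ in degrees $m_j,m_j+1$ truncates to $0$ if $m_j>-n$, to itself (still contractible) if $m_j+1\le -n$, and to the stalk complex $P_j$ in degree $-n$ if $m_j=-n$. Hence $C_{\le -n}[-n]\cong P\oplus(\text{contractible})$ with $P:=\bigoplus_{m_j=-n}P_j$ a projective $A$-module in degree $0$, so ${\rm Con}(f)_{\le -n}[-n]\simeq \Omega_{\mathscr{D}}^n(\cpx{Y})\oplus P$ in $\D{A\modcat}$. Substituting, the triangle above becomes $\Omega_{\mathscr{D}}^n(\cpx{X})\ra \Omega_{\mathscr{D}}^n(\cpx{Y})\oplus P\ra \Omega_{\mathscr{D}}^n(\cpx{Z})\ra \Omega_{\mathscr{D}}^n(\cpx{X})[1]$, as required.

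I expect the only genuine subtlety to be the bookkeeping in the last step: one has to identify precisely which contractible summands of ${\rm Con}(f)$ survive the truncation, and in which degree, in order to see that the error is exactly a stalk projective module rather than a longer complex. A minor technical matter is to fix the sign conventions for the mapping cone differential and the shift so that the truncated sequence is literally a short exact sequence of complexes; but since the statement only claims the existence of a triangle of the indicated shape, the object-level identifications together with the single honest triangle coming from the truncated short exact sequence suffice, and no compatibility of the identifications with the morphisms needs to be verified. One could alternatively reduce to the case $n=1$ via $\Omega_{\mathscr{D}}^{n+1}\simeq \Omega_{\mathscr{D}}^1\Omega_{\mathscr{D}}^n$ (Lemma~\ref{lem-prop-omega}(4)) together with the vanishing of $\Omega_{\mathscr{D}}^1$ on projectives, but the direct truncation argument above already handles every integer $n$ uniformly.
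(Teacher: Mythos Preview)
The paper does not prove this lemma; it merely quotes it from \cite[Proposition 3.8]{wei17}, so there is no in-paper argument to compare against. Your proposal is a correct self-contained proof: replacing $\cpx{X}$ and $\cpx{Z}$ by minimal projective resolutions, realising the triangle via a mapping cone, brutally truncating the resulting termwise split short exact sequence, and finally splitting off the contractible summands from the (non-minimal) cone to isolate $\Omega_{\mathscr{D}}^n(\cpx{Y})\oplus P$ is exactly the standard route, and your bookkeeping of which ``disk'' summands survive the truncation is accurate. The one small slip is the phrase ``with $\cpx{Q}$ not necessarily minimal'': you mean ${\rm Con}(f)$, since you explicitly took $\cpx{Q}$ to be minimal; this is just a typo and does not affect the argument.
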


Recall that the dual notation of syzygy complexes, that is , cosyzygy complexes. Note that $\Db{A\modcat}$ is equivalent to $\Kpb{\imodcat{A}}$ as triangulated categories. For a complex $\cpx{X}\in \Db{A\modcat}$, a injective resolution of $\cpx{X}$ is a complex $\cpx{I}\in\Kpb{\imodcat{A}}$ such that $\cpx{I}\simeq \cpx{X}$ in $\D{A\modcat}$. In case $X$ is an $A$-module, $\cpx{I}$ is just a usual injective resolution of $X$.

\begin{Def}{\rm (\cite[Definition 3.1$'$]{wei17})}
{\rm 
Let $\cpx{X}\in \Db{A\modcat}$ and $n\in \mathbb{Z}$. Let $\cpx{I}$ be a injective resolution of $\cpx{X}$. We say that a complex in $\Db{A\modcat}$ is an $n$\textit{-th cosyzygy} of $\cpx{X}$ provied that it is isomorphic to $\cpx{I}_{\ge n}[n]$ in $\D{A\modcat}$. In this case, the $n$-th cosyzygy of $\cpx{X}$ is denoted by $\Omega^{\mathscr{D}}_n(\cpx{X}_{\cpx{I}})$, or simply by $\Omega^{\mathscr{D}}_n(\cpx{X})$ if there is no danger of confusion.
}\end{Def}

The following list some basic properties of cosyzygy complexes in \cite{wei17}.

\begin{Lem}\label{lem-prop-omega-co}
{\rm (\cite[Lemma 3.3$'$]{wei17})}
Let $\cpx{X}, \cpx{Y}\in\Db{A\modcat}$, and $m,n,s,t$ be integers. Let $\cpx{I}$ be a injective resolutions of $\cpx{X}$. Then

{\rm (1)} $\Omega^{\mathscr{D}}_n(\cpx{X})\in \mathscr{D}^{[0,\infty)}(A\modcat)$ and $\Hom_{\Db{A\modcat}}(\Omega^{\mathscr{D}}_n(\cpx{X}),J[i])=0$ for any injective $A$-module $J$ and any integer $i>0$.

{\rm (2)}
If $\cpx{X}\in \mathscr{D}^{[s,t]}(A\modcat)$ for some integers $s\le t$, then $\Omega^{\mathscr{D}}_n(\cpx{X})\in \mathscr{D}^{[0,0]}(A\modcat),$
$\mathscr{D}^{[0,t-n]}(A\modcat),$
$\mathscr{D}^{[s-n,t-n]}(A\modcat)$
for cases $n\ge t$, $s\le n\le t$, $n\le s$, respectively. In particular, $\Omega^{\mathscr{D}}_{t}(\cpx{X})$ is isomorphic to an $A$-module and $\cpx{X}\simeq \Omega_{\mathscr{D}}^{s}(\cpx{X})[s]$.

{\rm (3)} $\Omega^{\mathscr{D}}_{n+m}(\cpx{X}[m])\simeq \Omega^{\mathscr{D}}_n(\cpx{X})$.

{\rm (4)} $\Omega^{\mathscr{D}}_{n+m}(\cpx{X})\simeq \Omega^{\mathscr{D}}_m(\Omega^{\mathscr{D}}_n(\cpx{X}))$ for $m\ge 0$.

{\rm (5)}
$\cpx{X}\in \Kb{\imodcat{A}}$ if and only if any/some cosyzygy of $\cpx{X}$ is also in $\Kb{\imodcat{A}}$.

{\rm (6)} $\Omega^{\mathscr{D}}_n(\cpx{X}\oplus \cpx{Y})\simeq \Omega^{\mathscr{D}}_n(\cpx{X})\oplus \Omega^{\mathscr{D}}_n(\cpx{Y})$.
\end{Lem}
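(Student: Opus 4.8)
The plan is to deduce Lemma~\ref{lem-prop-omega-co} from Lemma~\ref{lem-prop-omega} by passing to the opposite algebra via the $k$-duality $D=\Hom_k(-,E(k/\rad(k)))$, which avoids repeating the homotopy-theoretic bookkeeping. The first step is to record the properties of $D$ that are needed. It induces a contravariant triangle equivalence $\Db{A\modcat}\ra\Db{A\opp\modcat}$ with $D\circ D\cong\Id$; it carries $\pmodcat{A}$ onto $\imodcat{A\opp}$ and $\imodcat{A}$ onto $\pmodcat{A\opp}$; it commutes with cohomology up to reversing degrees, $H^i(D\cpx{Y})\cong D(H^{-i}(\cpx{Y}))$, so that it exchanges $\mathscr{D}^{[s,t]}(A\modcat)$ with $\mathscr{D}^{[-t,-s]}(A\opp\modcat)$ and $\mathscr{D}^{(-\infty,0]}(A\modcat)$ with $\mathscr{D}^{[0,\infty)}(A\opp\modcat)$; it satisfies $D(\cpx{Y}[m])\cong(D\cpx{Y})[-m]$ and intertwines brutal truncations, $D(\cpx{Y}_{\le m})\cong(D\cpx{Y})_{\ge -m}$; and, since a chain map is degreewise a radical homomorphism precisely when its $D$-image is, $D$ sends radical complexes to radical complexes, hence sends a minimal projective resolution of a complex over $A$ to a minimal injective resolution of its $D$-image over $A\opp$, and conversely.

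Combining these observations, if $\cpx{P}$ denotes the minimal projective resolution of $D\cpx{X}\in\Db{A\opp\modcat}$, then $D\cpx{P}$ is a minimal injective resolution of $D(D\cpx{X})\cong\cpx{X}$, and therefore
$$\Omega^{\mathscr{D}}_n(\cpx{X})\ \simeq\ D\bigl(\cpx{P}_{\le -n}[-n]\bigr)\ \simeq\ D\bigl(\Omega_{\mathscr{D}}^n(D\cpx{X})\bigr).$$
Under this identification, every assertion of Lemma~\ref{lem-prop-omega-co} about the cosyzygy $\Omega^{\mathscr{D}}_n(\cpx{X})$ over $A$ becomes the corresponding assertion of Lemma~\ref{lem-prop-omega} about the syzygy $\Omega_{\mathscr{D}}^n(D\cpx{X})$ over $A\opp$, after transport along $D$. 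Concretely: (1) follows from Lemma~\ref{lem-prop-omega}(1) using that $D$ swaps $\mathscr{D}^{(-\infty,0]}$ with $\mathscr{D}^{[0,\infty)}$, turns an injective $A$-module $J$ into a projective $A\opp$-module $DJ$, and converts $\Hom_{\Db{A\modcat}}(\Omega^{\mathscr{D}}_n(\cpx{X}),J[i])$ into $\Hom_{\Db{A\opp\modcat}}(DJ,\Omega_{\mathscr{D}}^n(D\cpx{X})[i])$; (2) follows from Lemma~\ref{lem-prop-omega}(2) applied to $D\cpx{X}\in\mathscr{D}^{[-t,-s]}(A\opp\modcat)$, since the substitution $(s,t)\mapsto(-t,-s)$ turns its three case-ranges and three target intervals, and its ``in particular'' clauses, into exactly those stated here; (3), (4) and (6) are formal consequences of the compatibility of $D$ with shifts, with iteration of (co)syzygies (which on the derived level is iterated brutal truncation), and with direct sums; and (5) follows from Lemma~\ref{lem-prop-omega}(5) together with the fact that $D$ restricts to an equivalence between $\Kb{\imodcat{A}}$ and $\Kb{\pmodcat{A\opp}}$.

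I expect the only genuine difficulty to be sign-and-direction bookkeeping: because $D$ is contravariant one must track carefully the shift rule $D(\cpx{Y}[m])\cong(D\cpx{Y})[-m]$ and the support flip $[s,t]\mapsto[-t,-s]$ through each of (1)--(6), and one must justify that the minimal injective resolution used to define $\Omega^{\mathscr{D}}_n$ may be taken to be $D$ of a minimal projective resolution (this is where the preservation of radical complexes by $D$ is used). A self-contained alternative is to mirror the proof of Lemma~\ref{lem-prop-omega} verbatim, replacing projective resolutions by injective ones and Lemma~\ref{lem-omega-shift} by its dual; but that merely reproduces the same argument, so the reduction via $D$ is the cleaner route.
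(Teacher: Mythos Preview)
The paper does not provide its own proof of this lemma; it is simply quoted from \cite[Lemma~3.3$'$]{wei17}, just as the non-dual Lemma~\ref{lem-prop-omega} is quoted from \cite[Lemma~3.3]{wei17}. Your proposal to deduce the cosyzygy statements from the syzygy statements via the duality $D$ is correct and is the standard way to pass between such dual results; the key identification $\Omega^{\mathscr{D}}_n(\cpx{X})\simeq D\bigl(\Omega_{\mathscr{D}}^n(D\cpx{X})\bigr)$ follows exactly as you outline from $D(\cpx{P}_{\le -n})\cong(D\cpx{P})_{\ge n}$ and $D([-n])=[n]\circ D$. One small caveat: in this paper the cosyzygy is defined with respect to an arbitrary (not necessarily minimal) injective resolution, so strictly speaking $\Omega^{\mathscr{D}}_n(\cpx{X})$ is only well defined up to an injective summand; your argument produces the minimal representative, which is enough for every assertion in (1)--(6), but you should remark that all six statements are insensitive to adding or removing bounded injective summands (or equivalently, that one may assume the injective resolution is minimal without loss of generality).
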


\begin{Lem}\label{lem-omega-coshift}
{\rm (\cite[Proposition 3.8$'$]{wei17})} Let $\cpx{X}\ra \cpx{Y}\ra \cpx{Z}\ra \cpx{X}[1]$ be a triangle in $\Db{A\modcat}$, and $n$ be an integer. Then there is a trianle $\Omega^{\mathscr{D}}_n(\cpx{X})\ra \Omega^{\mathscr{D}}_n(\cpx{Y})\oplus I\ra \Omega^{\mathscr{D}}_n(\cpx{Z})\ra \Omega^{\mathscr{D}}_n(\cpx{X})[1]$ for some injective $A$-module $I$.
\end{Lem}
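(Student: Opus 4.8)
The plan is to deduce this cosyzygy statement from its syzygy counterpart, Lemma~\ref{lem-omega-shift}, by transporting everything across the usual duality $D=\Hom_k\big(-,E(k/\rad(k))\big)$, which restricts to a contravariant triangle equivalence $\Db{A\modcat}\to\Db{A\opp\modcat}$ carrying finitely generated injective $A$-modules to finitely generated projective $A\opp$-modules. The one point that needs checking is the compatibility of $D$ with the cosyzygy operator: for every $\cpx{X}\in\Db{A\modcat}$ and every integer $n$ one has $D\big(\Omega^{\mathscr{D}}_n(\cpx{X})\big)\simeq\Omega_{\mathscr{D}}^{n}(D\cpx{X})$ in $\Db{A\opp\modcat}$, and hence, applying this over $A\opp$ and using $D^2\simeq\Id$, also $D\big(\Omega_{\mathscr{D}}^{n}(\cpx{W})\big)\simeq\Omega^{\mathscr{D}}_n(D\cpx{W})$ for $\cpx{W}\in\Db{A\modcat}$. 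To see the first isomorphism, let $\cpx{I}$ be the minimal injective resolution of $\cpx{X}$, a radical complex in $\Kpb{\imodcat{A}}$. Then $D\cpx{I}$ is a bounded-above complex of projective $A\opp$-modules quasi-isomorphic to $D\cpx{X}$, and it is again radical --- the defining condition for a radical map, that no indecomposable summand be carried isomorphically onto one on the other side, is preserved by $D$ since $D$ is an anti-equivalence and so preserves indecomposability and reflects isomorphisms --- so $D\cpx{I}$ is the minimal projective resolution of $D\cpx{X}$. Since, degreewise, $D$ sends the brutal truncation $\cpx{I}_{\ge n}$ to $(D\cpx{I})_{\le -n}$ and the shift $[n]$ to $[-n]$, we obtain $D(\cpx{I}_{\ge n}[n])\simeq(D\cpx{I})_{\le -n}[-n]=\Omega_{\mathscr{D}}^{n}(D\cpx{X})$.

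Granting this, the argument is short. Applying $D$ to the given triangle produces a triangle $D\cpx{Z}\to D\cpx{Y}\to D\cpx{X}\to D\cpx{Z}[1]$ in $\Db{A\opp\modcat}$; feeding it into Lemma~\ref{lem-omega-shift} over the algebra $A\opp$ yields a triangle
\[
\Omega_{\mathscr{D}}^{n}(D\cpx{Z})\to\Omega_{\mathscr{D}}^{n}(D\cpx{Y})\oplus P\to\Omega_{\mathscr{D}}^{n}(D\cpx{X})\to\Omega_{\mathscr{D}}^{n}(D\cpx{Z})[1]
\]
for some projective $A\opp$-module $P$, with the projective summand on the middle term. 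Applying $D$ once more, and using that a contravariant triangle functor carries distinguished triangles to distinguished triangles with the arrows reversed, we get a triangle whose successive terms are $D\Omega_{\mathscr{D}}^{n}(D\cpx{X})$, $D\big(\Omega_{\mathscr{D}}^{n}(D\cpx{Y})\oplus P\big)$ and $D\Omega_{\mathscr{D}}^{n}(D\cpx{Z})$; rewriting these via the compatibility above, via $D\big(\Omega_{\mathscr{D}}^{n}(D\cpx{Y})\oplus P\big)\simeq\Omega^{\mathscr{D}}_n(\cpx{Y})\oplus DP$, and via $D^2\simeq\Id$, this becomes
\[
\Omega^{\mathscr{D}}_n(\cpx{X})\to\Omega^{\mathscr{D}}_n(\cpx{Y})\oplus DP\to\Omega^{\mathscr{D}}_n(\cpx{Z})\to\Omega^{\mathscr{D}}_n(\cpx{X})[1].
\]
Since $P$ is a projective $A\opp$-module, $DP$ is an injective $A$-module, and taking $I:=DP$ completes the proof.

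The only genuine obstacle I anticipate lies in the bookkeeping inside the compatibility $D\,\Omega^{\mathscr{D}}_n\simeq\Omega_{\mathscr{D}}^{n}\,D$: one has to be sure that $D$ preserves \emph{minimality} of resolutions, so that the brutal truncation computes the cosyzygy on the nose rather than merely something isomorphic to it up to injective summands; Lemma~\ref{lem-radical}, on the uniqueness of radical complexes up to isomorphism, is exactly the tool that makes this precise. If one wished to avoid the duality altogether, one could instead imitate the proof of Lemma~\ref{lem-omega-shift} directly: lift $\cpx{X}\to\cpx{Y}$ to a chain map $\cpx{U}\to\cpx{V}$ of minimal injective resolutions, realize the given triangle as the one attached to the termwise-split short exact sequence $0\to\cpx{U}\to\cpx{V}\to{\rm Con}\to 0$, apply the brutal truncation $(-)_{\ge n}$ --- which is exact on termwise-split sequences and hence yields a triangle --- and then reconcile ${\rm Con}$ with the minimal injective resolution of $\cpx{Z}$; the two differ by a contractible complex of injectives, and the truncation of that contractible part is what produces the extra injective module $I$. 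The duality route is cleaner precisely because it offloads this last reconciliation onto Lemma~\ref{lem-omega-shift}.
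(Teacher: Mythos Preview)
Your argument is correct. The paper does not actually prove this lemma: it is quoted verbatim from \cite[Proposition~3.8$'$]{wei17}, so there is no ``paper's own proof'' to compare against. Your derivation via the duality $D$ from the syzygy version (Lemma~\ref{lem-omega-shift}) is a clean and legitimate route; the compatibility $D\big(\Omega^{\mathscr{D}}_n(\cpx{X})\big)\simeq\Omega_{\mathscr{D}}^{n}(D\cpx{X})$ is exactly right once one checks, as you do, that $D$ exchanges minimal injective and minimal projective resolutions (radicality is preserved by any additive anti-equivalence) and swaps the brutal truncations $(-)_{\ge n}[n]$ and $(-)_{\le -n}[-n]$.

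Two minor remarks. First, in the sentence deriving the second compatibility ``$D\big(\Omega_{\mathscr{D}}^{n}(\cpx{W})\big)\simeq\Omega^{\mathscr{D}}_n(D\cpx{W})$'' you write $\cpx{W}\in\Db{A\modcat}$, but what you actually need and use is $\cpx{W}\in\Db{A\opp\modcat}$ (namely $\cpx{W}=D\cpx{X}$, $D\cpx{Y}$, $D\cpx{Z}$); the derivation you sketch --- substitute $\cpx{X}=D\cpx{W}$ into the first compatibility and apply $D$ --- gives precisely that. Second, note that the paper's Definition of cosyzygy does not insist on a \emph{minimal} injective resolution, so strictly speaking $\Omega^{\mathscr{D}}_n$ is only well defined up to injective summands; your argument implicitly works with the minimal choice, which is harmless since the conclusion already allows an injective summand $I$ on the middle term. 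The alternative direct approach you outline at the end (lift to injective resolutions, truncate the termwise-split cone) is essentially the dual of Wei's original argument for Lemma~\ref{lem-omega-shift} and would be the proof one finds in \cite{wei17}.
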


\subsection{Recollements}

In this subsection, we recall the notation of recollements introduced by Beilinson, Bernstein and Deligne in \cite{BBD}, which is a very useful tool for representation theory and algebraic geometry.

\begin{Def}{\rm
Let $\mathscr{D}$, $\mathscr{D}'$ and $\mathscr{D}''$ be triangulated categories with shift functors denoted by $[1]$.

{\rm (1)}
{\rm (\cite{BBD})} A \emph{recollement} of $\mathscr{D}$ by $\mathscr{D}'$ and $\mathscr{D}''$ is a diagram of six triangle functors
\begin{align}\label{diag:rec}
\xymatrixcolsep{4pc}\xymatrix{\mathscr{D}' \ar[r]|{i_*=i_!} &\mathscr{D} \ar@<-2ex>[l]|{i^*} \ar@<2ex>[l]|{i^!} \ar[r]|{j^!=j^*}  &\mathscr{D}'', \ar@<-2ex>[l]|{j_!} \ar@<2ex>[l]|{j_{*}}
}
\end{align}
which satisfies

{\rm (R1)} $(i^\ast,i_\ast)$,\,$(i_!,i^!)$,\,$(j_!,j^!)$ ,\,$(j^\ast,j_\ast)$
are adjoint pairs;

{\rm (R2)}
$i_*,~j_*,~j_!$ are fully faithful$;$

{\rm (R3)}
$j^*\circ i_*=0$ $($thus $i^*\circ j_!=0$ and $i^!\circ j_*=0$$);$

{\rm (R4)}
for any object $X$ in $\mathscr{D}$, there are two triangles in $\mathscr{D}$ induced by counit and unit adjunctions$:$
$$ \xymatrix@R=0.5pc{i_*i^!(X)\ar[r] &X \ar[r] & j_*j^*(X) \ar[r]& i_*i^!(X)[1]\\
j_!j^*(X) \ar[r] &X \ar[r] & i_*i^*(X) \ar[r]& j_!j^*(X)[1]} $$
The upper two rows
\begin{align*}
\xymatrixcolsep{4pc}\xymatrix{\mathscr{D}' \ar[r]|{i_*} &\mathscr{D} \ar@<-2ex>[l]|{i^*}  \ar[r]|{j^*}  &\mathscr{D}'' \ar@<-2ex>[l]|{j_!}
}
\end{align*}
is said to be a \emph{left recollemet} of $\mathscr{D}$ by $\mathscr{D}'$ and $\mathscr{D}''$ if the four functors $i^*$, $i_*$, $j_!$ and $j^!$ satisfy the conditions in {\rm (R1)-(R4)} involving them.  A \emph{right recollement} is defined similarly via the lower two rows.

{\rm (2)}
{\rm (\cite{akly17})}
A \emph{ladder} of $\mathscr{D}$ by $\mathscr{D}'$ and $\mathscr{D}''$ is a finite or infinite diagram with triangle functors
\vspace{-10pt}$${\setlength{\unitlength}{0.7pt}
\begin{picture}(200,170)
\put(0,70){$\xymatrix@!=3pc{\mathscr{D}'
\ar@<+2.5ex>[r]|{i_{n-1}}\ar@<-2.5ex>[r]|{i_{n+1}} &\mathscr{D}
\ar[l]|{j_n} \ar@<-5.0ex>[l]|{j_{n-2}} \ar@<+5.0ex>[l]|{j_{n+2}}
\ar@<+2.5ex>[r]|{j_{n-1}} \ar@<-2.5ex>[r]|{j_{n+1}} &
\mathscr{D}''\ar[l]|{i_n} \ar@<-5.0ex>[l]|{i_{n-2}}
\ar@<+5.0ex>[l]|{i_{n+2}}}$}
\put(52.5,10){$\vdots$}
\put(137.5,10){$\vdots$}
\put(52.5,130){$\vdots$}
\put(137.5,130){$\vdots$}
\end{picture}}$$
such that any three consecutive rows form a recollement. The rows are labelled by a subset of $\mathbb{Z}$ and multiple occurence of the same recollement is allowed. The \emph{height} of a ladder is the number of recollements
contained in it $($counted with multiplicities$).$ It is an element of $\mathbb{N}\cup \{0,\infty\}$. A recollement is viewed to be a ladder of height $1$.
}\end{Def}

Recall that a sequence of triangulated categories $\mathscr{D}'\stackrel{F}{\to}\mathscr{D}\stackrel{G}{\to}\mathscr{D}''$ is said to be a \emph{short exact sequence} (up to direct summands) if $F$ is fully faithful, $G\circ F=0$ and the induced functor $\overline{G}\colon \mathscr{D}/\mathscr{D}'\to\mathscr{D}''$ is an equivalence (up to direct summands). The following result is well-known, see \cite[1.4.4, 1.4.5, 1.4.8]{BBD} and \cite[Chapter III, Lemma 1.2 (1) and Chapter IV, Proposition 1.11]{BR07}.

\begin{Prop}
\label{prop:recollement-vs-ses}
$(1)$ The two rows of a left recollement are short exact sequences of triangulated categories.
Conversely, assume that there is a short exact sequence of triangulated categories {\rm (}possibly up to direct summands{\rm)}
$$\xymatrixcolsep{2pc}\xymatrix{\mathscr{D}' \ar[r]^{i_*} &\mathscr{D}   \ar[r]^{j^*}  &\mathscr{D}''.
}$$
Then $i_*$ has a left adjoint {\rm (}respectively, right adjoint{\rm)} if and only if $j^*$ has a left adjoint {\rm (}respectively, right adjoint{\rm)}. In this case, $i_*$ and $j^*$ together with their left adjoints {\rm (}respectively, right adjoints{\rm)} form a left recollement {\rm (}respectively, right recollement{\rm)} of $\mathscr{D}$ in terms of $\mathscr{D}'$ and $\mathscr{D}''$.

$(2)$ Assume that there is a diagram 
\begin{align*}
\xymatrixcolsep{4pc}\xymatrix{\mathscr{D}' \ar[r]|{i_*=i_!} &\mathscr{D} \ar@<-2ex>[l]|{i^*} \ar@<2ex>[l]|{i^!} \ar[r]|{j^!=j^*}  &\mathscr{D}'' \ar@<-2ex>[l]|{j_!} \ar@<2ex>[l]|{j_{*}}
}
\end{align*} satisfying the condition {\rm (R1)}. If it is a recollement, then all the three rows are short exact sequences of triangulated categories. Conversely, if any one of the three rows is a short exact sequence of triangulated categories, then the diagram is a recollement.

$(3)$ Given a recollement \eqref{diag:rec}, assume that $\mathscr{D}$, $\mathscr{D}'$ and $\mathscr{D}''$ admit small coproducts. Then both $j_!$ and $i^*$ preserve compact objects.
\end{Prop}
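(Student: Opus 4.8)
The three parts are of quite different natures, so the plan is to treat them in turn, relying throughout on the standard formalism relating Verdier quotients and fully faithful adjoints: a functor carrying a fully faithful left (respectively, right) adjoint is a Verdier quotient functor exhibiting its target, up to idempotent completion, as the localisation of its source at its kernel, and conversely every left or right adjoint of a Verdier quotient functor is fully faithful and splits the corresponding (co)unit. I will also use that a left adjoint between triangulated categories with small coproducts preserves coproducts, and that the right adjoint of a composite $G\circ F$ is $F^{R}\circ G^{R}$, so that $GF=0$ forces $F^{R}G^{R}=0$.

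For $(1)$, the forward direction runs as follows. In a left recollement $i_{*}$ is fully faithful and $j^{*}i_{*}=0$, and the triangle $j_{!}j^{*}(X)\to X\to i_{*}i^{*}(X)\to j_{!}j^{*}(X)[1]$ shows that $\ker(j^{*})$ equals the essential image of $i_{*}$, which is thick; hence $j^{*}$ factors through the Verdier quotient $q\colon\mathscr{D}\to\mathscr{D}/\mathrm{im}(i_{*})$. Since $j^{*}$ carries the fully faithful left adjoint $j_{!}$, it is itself a localisation at $\ker(j^{*})$, so the induced functor $\overline{j^{*}}\colon\mathscr{D}/\mathrm{im}(i_{*})\to\mathscr{D}''$ is an equivalence up to direct summands; this is the first short exact sequence, and the row $\mathscr{D}''\xrightarrow{j_{!}}\mathscr{D}\xrightarrow{i^{*}}\mathscr{D}'$ is obtained the same way, now with $j_{!}$ fully faithful and $i^{*}$ a localisation carrying the fully faithful right adjoint $i_{*}$. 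For the converse, suppose $\mathscr{D}'\xrightarrow{i_{*}}\mathscr{D}\xrightarrow{j^{*}}\mathscr{D}''$ is short exact up to direct summands and $j^{*}$ has a left adjoint $j_{!}$. Then $j_{!}$ is fully faithful, $j^{*}j_{!}\cong\Id$, so $j_{!}j^{*}$ is an idempotent endofunctor of $\mathscr{D}$ and the cone $L:=\mathrm{cone}(j_{!}j^{*}\to\Id)$ is a well-defined triangle functor landing in $\ker(j^{*})=\mathrm{im}(i_{*})$; composing $L$ with a quasi-inverse of $i_{*}$ onto its (idempotent-completed) image produces a functor $i^{*}$, and the adjunction $i^{*}\dashv i_{*}$, the vanishing $j^{*}i_{*}=0$, and the two triangles of (R4) are read off the defining triangle of $L$, so $(i^{*},i_{*},j_{!},j^{*})$ is a left recollement. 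The case where $i_{*}$ has a left adjoint is dual, and the statements for the right adjoints $i^{!},j_{*}$ follow by passing to opposite categories.

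For $(2)$, if the diagram satisfying (R1) is a recollement, then $(i^{*},i_{*},j_{!},j^{*})$ is a left recollement and $(i_{*},i^{!},j_{*},j^{*})$ is a right recollement, so applying $(1)$ to each shows that all three rows
$$\mathscr{D}''\xrightarrow{j_{!}}\mathscr{D}\xrightarrow{i^{*}}\mathscr{D}',\qquad \mathscr{D}'\xrightarrow{i_{*}}\mathscr{D}\xrightarrow{j^{*}}\mathscr{D}'',\qquad \mathscr{D}''\xrightarrow{j_{*}}\mathscr{D}\xrightarrow{i^{!}}\mathscr{D}'$$
are short exact. Conversely, assume (R1) and that one of these rows is short exact. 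If it is the middle row, apply the converse of $(1)$ once with the left adjoints $i^{*}$ of $i_{*}$ and $j_{!}$ of $j^{*}$ to obtain the left recollement, and once with the right adjoints $i^{!}$ and $j_{*}$ (available by (R1)) to obtain the right recollement; the remaining vanishings $i^{*}j_{!}=0$ and $i^{!}j_{*}=0$ are the adjoints of $j^{*}i_{*}=0$, and together these data constitute the recollement. If the short exact row is the top one, $\mathscr{D}''\xrightarrow{j_{!}}\mathscr{D}\xrightarrow{i^{*}}\mathscr{D}'$, I would first show that the middle row is also short exact: $i_{*}$ is fully faithful as the right adjoint of the localisation $i^{*}$, the vanishing $j^{*}i_{*}=0$ is adjoint to $i^{*}j_{!}=0$, and a short diagram chase with the localisation triangles of $i^{*}$ together with $j^{*}j_{!}\cong\Id$ identifies $\ker(j^{*})$ with $\mathrm{im}(i_{*})$ up to summands; the middle-row case then applies. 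The bottom row is dual.

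Finally, $(3)$ is a direct adjunction computation. Let $C\in\mathscr{D}''$ be compact and $\{X_{\lambda}\}_{\lambda}$ a family in $\mathscr{D}$; since $j^{*}=j^{!}$ is a left adjoint (of $j_{*}$) it commutes with coproducts, hence
\begin{align*}
\Hom_{\mathscr{D}}\big(j_{!}(C),\coprod_{\lambda}X_{\lambda}\big)
&\cong\Hom_{\mathscr{D}''}\big(C,\coprod_{\lambda}j^{*}(X_{\lambda})\big)\\
&\cong\coprod_{\lambda}\Hom_{\mathscr{D}''}\big(C,j^{*}(X_{\lambda})\big)
\cong\coprod_{\lambda}\Hom_{\mathscr{D}}\big(j_{!}(C),X_{\lambda}\big),
\end{align*}
using the adjunction $(j_{!},j^{*})$ and compactness of $C$, so $j_{!}(C)$ is compact. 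Likewise, for $C\in\mathscr{D}$ compact and $\{Y_{\lambda}\}$ in $\mathscr{D}'$, the functor $i_{*}=i_{!}$ is a left adjoint (of $i^{!}$) and so commutes with coproducts, and the same chain of isomorphisms with $(i^{*},i_{*})$ in place of $(j_{!},j^{*})$ shows $i^{*}(C)$ is compact. I expect the main obstacle of the whole proof to be the converse of $(1)$: upgrading the cone assignment $X\mapsto\mathrm{cone}(j_{!}j^{*}(X)\to X)$ to an honest triangle functor right adjoint to $i_{*}$, and tracking the ``up to direct summands'' proviso consistently — this is the one point where one must invoke (or re-derive a slice of) the Bousfield (co)localisation formalism; everything else is formal bookkeeping with adjunctions and distinguished triangles.
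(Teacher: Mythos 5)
The paper offers no proof of this proposition at all---it is imported as well known, with citations to \cite[1.4.4, 1.4.5, 1.4.8]{BBD} and \cite{BR07}---and your outline is precisely the standard Bousfield (co)localisation argument given in those sources: Verdier quotient functors with fully faithful adjoints for (1), assembling the left and right halves (plus the adjoint vanishings of $j^*i_*=0$) for (2), and the adjunction-plus-compactness computation for (3), which is complete as written. The only loose point is dismissing the direction of (1) in which $i_*$ has a left adjoint as ``dual'': it is really the mirror argument through the left orthogonal ${}^{\perp}\mathrm{im}(i_*)$ (not a passage to opposite categories, which would interchange left and right adjoints), but it uses exactly the localisation machinery you already invoke, so this is a presentational slip rather than a genuine gap.
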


\begin{Def}{\rm
We say that a recollement \eqref{diag:rec} \emph{extends one step downwards}  if both $i^!$ and $j_*$ have right adjoints. In this case, let $i_{?}$ and $j^{\#}$ be the right adjoints of $i^!$ and $j_*$, respectively. Then we have the following diagram:
\begin{align}\label{ladder-2}
\xymatrixcolsep{4pc}\xymatrix{\mathscr{D}' \ar[r]|{i_*=i_!} \ar@<-4ex>[r]|{i_{?}}
&\mathscr{D} \ar@<-2ex>[l]|{i^*} \ar@<2ex>[l]|{i^!} \ar[r]|{j^!=j^*} \ar@<-4ex>[r]|{j^{\#}} &\mathscr{D}''. \ar@<-2ex>[l]|{j_!} \ar@<2ex>[l]|{j_{*}}
}
\end{align}
By Proposition~\ref{prop:recollement-vs-ses}, the lower three rows also form a recollement. This means that (\ref{ladder-2}) form a ladder of height $2$. Similarly, we have the notion of extending the recollement one step upwards.
}\end{Def}

Now, we consider the recollements of derived module categories. Let $F:\Db{A\modcat}\to \Db{B\modcat}$ be a triangle functor. We say that $F$ \emph{restricts to} $\Kb{\rm proj}$ (respectively, $\Kb{\rm inj}$) if $F$ sends $\Kb{\pmodcat{A}}$ (respectively, $\Kb{\imodcat{A}}$) to $\Kb{\pmodcat{B}}$ (respectively, $\Kb{\imodcat{B}}$). Let $F:\D{A\Modcat}\to \D{B\Modcat}$ be a triangle functor. We say that $F$ \emph{restricts to} $\Db{\rm mod}$ (respectively, $\Kb{\rm proj}$, $\Kb{\rm inj}$) in a similar sense.

\begin{Lem}
{\rm (See \cite[Lemmas 3.1 and 3.2]{ww22})}
Let $A$ and $B$ be two Artin algebras, $s\le t\in \mathbb{Z}$, and $F:\D{A}\ra \D{B}$ be a triangle functor. Then the following is true.

$(1)$ If $F(A)\in\mathscr{K}^{[s,t]}(\pmodcat{B})$, then $F(\mathscr{K}^{[u,v]}(\pmodcat{B}))\subseteq\mathscr{K}^{[u+s,v+t]}(\pmodcat{B})$ for any integers $u\le v$.

$(2)$ If $F(D(A))\in\mathscr{K}^{[s,t]}(\imodcat{B})$, then $F(\mathscr{K}^{[u,v]}(\imodcat{B}))\subseteq\mathscr{K}^{[u+s,v+t]}(\imodcat{B})$ for any integers $u\le v$.

$(3)$ If $F(A\modcat)\subseteq\mathscr{D}^{[s,t]}(B\modcat)$, then $F(\mathscr{D}^{[u,v]}(B\modcat))\subseteq\mathscr{D}^{[u+s,v+t]}(B\modcat)$ for any integers $u\le v$.
\end{Lem}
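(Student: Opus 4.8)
The plan is to prove (1), (2) and (3) by a single device: induction on the width $v-u$, splitting off the top layer of an object of $\mathscr{K}^{[u,v]}(\pmodcat{A})$ (resp.\ $\mathscr{K}^{[u,v]}(\imodcat{A})$, $\mathscr{D}^{[u,v]}(A\modcat)$ --- I read the domains as the source categories of $F$ demand) by a truncation triangle, applying the triangle functor $F$, and then bounding the middle term of the resulting triangle in $\D{B}$. Throughout I will use that a triangle functor is additive, commutes with shifts, and, under each hypothesis, carries the relevant thick subcategory of $\D{A}$ (namely $\Kb{\pmodcat{A}}$, resp.\ $\Kb{\imodcat{A}}$) into the corresponding one over $B$, so that every object produced below genuinely lies in a homotopy category of projectives or injectives.

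For (1): the base case $u=v$ reduces to showing $F(Q)\in\mathscr{K}^{[s,t]}(\pmodcat{B})$ for every $Q\in\pmodcat{A}$, for then $F(Q[-u])\simeq F(Q)[-u]\in\mathscr{K}^{[s+u,t+u]}(\pmodcat{B})$. Since $Q$ is a direct summand of some $A^{n}$, $F(Q)$ is a direct summand in $\Kb{\pmodcat{B}}$ of $F(A)^{n}\in\mathscr{K}^{[s,t]}(\pmodcat{B})$, so this follows from the closure of $\mathscr{K}^{[s,t]}(\pmodcat{B})$ under direct summands in $\Kb{\pmodcat{B}}$ (see the next paragraph). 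For the inductive step, take a genuine complex $\cpx{P}$ concentrated in $[u,v]$ with $u<v$; the degreewise split short exact sequence $0\to\cpx{P}_{\ge u+1}\to\cpx{P}\to P^{u}[-u]\to 0$ yields a triangle $\cpx{P}_{\ge u+1}\to\cpx{P}\to P^{u}[-u]\to\cpx{P}_{\ge u+1}[1]$ in $\Kb{\pmodcat{A}}$. Applying $F$, the induction hypothesis gives $F(\cpx{P}_{\ge u+1})\in\mathscr{K}^{[u+1+s,\,v+t]}(\pmodcat{B})$ and the base case gives $F(P^{u}[-u])\in\mathscr{K}^{[u+s,\,u+t]}(\pmodcat{B})$. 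I then invoke the elementary fact that in a triangle $\cpx{X}\to\cpx{Y}\to\cpx{Z}\to\cpx{X}[1]$ of $\Kb{\pmodcat{B}}$ one has $\cpx{Y}\in\mathscr{K}^{I\cup J}(\pmodcat{B})$ whenever $\cpx{X}\in\mathscr{K}^{I}(\pmodcat{B})$ and $\cpx{Z}\in\mathscr{K}^{J}(\pmodcat{B})$ --- because after rotating, $\cpx{Y}$ is the mapping cone of a chain map $\cpx{Z}[-1]\to\cpx{X}$, whose $i$-th component is $Z^{i}\oplus X^{i}$. As $[u+1+s,v+t]\cup[u+s,u+t]=[u+s,v+t]$, this yields $F(\cpx{P})\in\mathscr{K}^{[u+s,\,v+t]}(\pmodcat{B})$ and closes the induction.

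The hard part is the summand-closure statement: a direct summand in $\Kb{\pmodcat{B}}$ of an object of $\mathscr{K}^{[a,b]}(\pmodcat{B})$ again lies in $\mathscr{K}^{[a,b]}(\pmodcat{B})$. I would prove this via minimal representatives. By Lemma~\ref{lem-radical}, such a summand $\cpx{X}$ is isomorphic in $\Kb{\pmodcat{B}}$ to a bounded \emph{radical} complex of projectives, and it suffices to show this radical complex has no nonzero term outside $[a,b]$. If its top term $P^{b'}$ were nonzero with $b'>b$, the projection $\cpx{X}\to\top(P^{b'})[-b']$ is a chain map that is not null-homotopic: a homotopy would force $\top(P^{b'})$ to be a quotient of the image of the radical map $P^{b'-1}\to P^{b'}$, hence contained in the radical of a semisimple module, i.e.\ zero --- a contradiction (the symmetric bottom case $a'<a$ uses the radical map $P^{a'}\to P^{a'+1}$). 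But $\cpx{X}$ is a summand of a complex concentrated in $[a,b]$, and any chain map from such a complex to $M[i]$ with $i\notin[-b,-a]$ vanishes, since $M$ sits in a single degree outside the support; applying this with $i=-b'$ gives the contradiction. Keeping the two ends of the complex straight is the only genuine subtlety.

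Part (3) runs on the same scheme but more smoothly: for $\cpx{X}\in\mathscr{D}^{[u,v]}(A\modcat)$ with $u<v$, use the canonical truncation triangle $\tau_{\le v-1}\cpx{X}\to\cpx{X}\to H^{v}(\cpx{X})[-v]\to(\tau_{\le v-1}\cpx{X})[1]$, where $\tau_{\le v-1}\cpx{X}\in\mathscr{D}^{[u,v-1]}(A\modcat)$ and $H^{v}(\cpx{X})\in A\modcat$; applying $F$, feeding in $F(H^{v}(\cpx{X}))\in\mathscr{D}^{[s,t]}(B\modcat)$ together with the induction hypothesis, the long exact cohomology sequence places $F(\cpx{X})$ in $\mathscr{D}^{[u+s,v+t]}(B\modcat)$ at once --- no summand issue arises, since cohomology commutes with direct sums. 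For (2) I would argue dually: either repeat (1) with minimal complexes of injectives (using that every injective is a summand of some $D(A)^{n}$, with $\soc$ replacing $\top$), or, more efficiently, apply the $k$-duality $D$, which exchanges $\pmodcat{(-)}$ with $\imodcat{(-)}$ over the opposite algebras, reverses the grading of complexes, and turns $F$ into a triangle functor between $\D{A\opp}$ and $\D{B\opp}$ satisfying the hypothesis of (1); undoing the degree reversal $[a,b]\mapsto[-b,-a]$ converts the conclusion of (1) into the bound claimed in (2).
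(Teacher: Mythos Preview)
The paper does not prove this lemma; it is quoted with attribution to \cite[Lemmas~3.1 and~3.2]{ww22}, so there is no in-paper argument to compare against. Your proposal is a correct self-contained proof. The induction on $v-u$ via brutal (for (1) and (2)) or canonical (for (3)) truncation triangles, together with the mapping-cone description of the middle term of a triangle, is exactly the right mechanism, and the reduction of (2) to (1) via $k$-duality is legitimate once one observes that $D_B\circ F\circ D_A$ is a covariant triangle functor $\D{A^{\opp}}\to\D{B^{\opp}}$ taking $A^{\opp}$ to $D_B(F(D(A)))\in\mathscr{K}^{[-t,-s]}(\pmodcat{B^{\opp}})$.

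One small correction in your summand-closure step for (1): for a minimal complex $\cpx{P}$ and any $i$ with $P^{i}\neq 0$, the non-null-homotopy of the projection $\cpx{P}\to\top(P^{i})[-i]$ is governed by the radical differential $d^{i}\colon P^{i}\to P^{i+1}$ --- a homotopy would write the surjection $P^{i}\twoheadrightarrow\top(P^{i})$ as $d^{i}s^{i+1}$, which is impossible since $\im d^{i}\subseteq\rad P^{i+1}$ is annihilated by any map to a semisimple module. This single observation treats the top and the bottom of $\cpx{P}$ uniformly; your account of the top case names $P^{b'-1}\to P^{b'}$, which is the differential making the projection a \emph{chain map}, not the one obstructing the homotopy, but the conclusion is unaffected. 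A cleaner alternative, already available from Lemma~\ref{lem-radical}, is: if $\cpx{P}\oplus\cpx{P}'\simeq\cpx{Q}$ in $\Kb{\pmodcat{B}}$ with $\cpx{P}$ minimal and $\cpx{Q}\in\mathscr{K}^{[a,b]}(\pmodcat{B})$, replace $\cpx{P}'$ and $\cpx{Q}$ by their minimal representatives to obtain an isomorphism in $\Cb{\pmodcat{B}}$, whence $P^{i}$ is a direct summand of $Q^{i}_{\min}=0$ for $i\notin[a,b]$.
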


The following results are well-known, see \cite{akly17,JYZ23}.
\begin{Lem}
Let $A$ and $B$ be two Artin algebras. Suppose that there are triangle functors
\begin{align*}
\xymatrixcolsep{4pc}\xymatrix{
\D{B\Modcat} \ar@<-1ex>[r]|{G}
&\D{A\Modcat} \ar@<-1ex>[l]|{F} 
}
\end{align*}
between the unbounded derived categories of $A$ and $B$ such that $(F,G)$ is an adjoint pair. Then the following is true.

$(1)$ $F$ restricts to $\Kb{{\rm proj}}$ if and only if $G$ restricts to $\Db{{\rm mod}}$.

$(2)$ $F$ restricts to $\Db{{\rm mod}}$ if and only if $G$ restricts to $\Kb{{\rm inj}}$.
\end{Lem}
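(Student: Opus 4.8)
The plan is to prove parts (1) and (2) of the last lemma by the standard adjunction argument that links a triangle functor restricting to $\Kb{\mathrm{proj}}$ (or $\Kb{\mathrm{inj}}$) with the behaviour of its adjoint on compact objects, using that $\Kb{\pmodcat{A}}$ is the subcategory of compact objects in $\D{A\Modcat}$, that $\Db{A\modcat}\simeq\Kfb{\pmodcat{A}}$, and that $D(A)$ generates $\Db{A\modcat}$ as a thick subcategory via injectives. For part (1), suppose $F$ restricts to $\Kb{\mathrm{proj}}$; to see that $G$ restricts to $\Db{\mathrm{mod}}$, it suffices to check $G(N)\in\Db{B\modcat}$ for each $B$-module $N$, equivalently $\Hom_{\D{A\Modcat}}(P,G(N)[i])$ vanishes for all but finitely many $i$ and all finitely generated projective $A$-modules $P$ (one has to be a little careful: $\Db{A\modcat}$ inside $\D{A\Modcat}$ is characterized as the complexes $\cpx{X}$ with $\bigoplus_i H^i(\cpx{X})$ finitely generated and bounded, and this can be tested against the compact generators $P$ together with a finiteness-of-homology statement). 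By the adjunction $(F,G)$ we have $\Hom(P,G(N)[i])\cong\Hom(F(P),N[i])$, and since $F(P)\in\add(F(A))\subseteq\Kb{\pmodcat{B}}$ is a bounded complex of projectives, the right-hand side is nonzero for only finitely many $i$; a bookkeeping argument over the finitely many $P$ running through the indecomposable projectives then yields $G(N)\in\Db{B\modcat}$. Conversely, if $G$ restricts to $\Db{\mathrm{mod}}$, then for $P$ a finitely generated projective $A$-module and any $B$-module $N$, $\Hom_{\D{B\Modcat}}(F(P),N[i])\cong\Hom_{\D{A\Modcat}}(P,G(N)[i])$ vanishes for $|i|\gg 0$ because $G(N)$ is cohomologically bounded, and this forces $F(P)$ to lie in $\Kb{\pmodcat{B}}$ — here one invokes that a compact object $\cpx{Y}$ of $\D{B\Modcat}$ lies in $\Kb{\pmodcat{B}}$ iff $\Hom(\cpx{Y},N[i])=0$ for $|i|\gg0$ for every module $N$ (i.e. $\cpx{Y}$ has finite projective dimension as a complex), together with compactness of $F(P)$, which follows from Proposition~\ref{prop:recollement-vs-ses}(3) applied in the recollement context, or directly from $F$ having a right adjoint $G$ that preserves coproducts.

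Part (2) is dual: one runs the same argument with $D(A)$ and $D(B)$ in place of $A$ and $B$, and with injective resolutions in place of projective ones, using that $\Db{A\modcat}\simeq\Kpb{\imodcat{A}}$ and that $\add(D(A))$ cogenerates. Concretely, if $F$ restricts to $\Db{\mathrm{mod}}$, then for an injective $B$-module $J$ and an $A$-module $M$ we compute $\Hom_{\D{A\Modcat}}(G(M)\ ?\ ,\ ?)$ — more precisely we need the other adjunction: since $(F,G)$ is an adjoint pair we only directly have $\Hom(F(-),-)\cong\Hom(-,G(-))$, so to test $G$ on injectives we instead test $F$ on modules and dualize. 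The clean way is: $G$ restricts to $\Kb{\mathrm{inj}}$ iff for every $A$-module $M$ and every $B$-module $N$, $\Hom_{\D{B\Modcat}}(N,G(M)[i])$ is "bounded" in $i$ in the appropriate sense, and $\Hom(N,G(M)[i])\cong\Hom(F(N),M[i])$ by adjunction (run with $N$ ranging over all modules), which is bounded precisely when $F(N)\in\Db{B\modcat}$ for all $N$, i.e. when $F$ restricts to $\Db{\mathrm{mod}}$; the characterization of $\Kb{\imodcat{B}}$ inside $\D{B\Modcat}$ as the complexes of finite injective dimension (those $\cpx{Y}$ with $\Hom(N,\cpx{Y}[i])=0$ for $|i|\gg0$ for all modules $N$) closes the loop.

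\textbf{Main obstacle.} The routine part is the $\Hom$-shuffling via adjunction; the delicate point is the two characterizations used as black boxes, namely (a) that a complex $\cpx{X}\in\D{A\Modcat}$ belongs to $\Db{A\modcat}$ (resp. to $\Kb{\pmodcat{A}}$, resp. to $\Kb{\imodcat{A}}$) exactly when suitable $\Hom$-groups against the compact (co)generators are finitely supported in the shift variable, and (b) that $F(P)$ is compact so that these $\Hom$-tests are even applicable and detect membership in $\Kb{\pmodcat{B}}$. For (a) one must be careful that cohomological boundedness plus finite generation of total cohomology is what $\Db{A\modcat}$ means, and that finite projective dimension of a compact complex is equivalent to the vanishing-of-high-$\Ext$ condition; for (b) one uses that a left adjoint between compactly generated triangulated categories whose right adjoint preserves coproducts sends compacts to compacts, or simply cites Proposition~\ref{prop:recollement-vs-ses}(3). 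Since the excerpt explicitly says "the following results are well-known" and cites \cite{akly17,JYZ23}, in the write-up I would state these characterizations precisely as a preliminary observation (or cite them) and then give the short adjunction argument, keeping the proof to a few lines per part.
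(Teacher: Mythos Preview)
The paper does not prove this lemma; it is stated as ``well-known'' with references to \cite{akly17,JYZ23}, so there is no in-paper proof to compare against and I assess your argument on its own.

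Your forward direction of (1) is fine (modulo the slip that $G(N)$ should land in $\Db{A\modcat}$, not $\Db{B\modcat}$). The converse, however, has a genuine gap. You want to conclude $F(P)\in\Kb{\pmodcat{B}}$ from the vanishing of $\Hom(F(P),N[i])$ for $|i|\gg 0$, and you propose to do this by first establishing that $F(P)$ is compact. But in $\D{B\Modcat}$ the compact objects are precisely $\Kb{\pmodcat{B}}$, so ``$F(P)$ is compact'' is exactly the conclusion you are after; and neither of your two proposed justifications applies here: we are not in a recollement, and a right adjoint $G$ need not preserve coproducts in general. The standard fix is to first show $F(A)\in\Db{B\modcat}$ directly via duality: one has
\[
D\big(H^i(F(A))\big)\;\cong\;\Hom_{\D{B\Modcat}}\big(F(A),D(B)[-i]\big)\;\cong\;\Hom_{\D{A\Modcat}}\big(A,G(D(B))[-i]\big)\;=\;H^{-i}\big(G(D(B))\big),
\]
and since $G(D(B))\in\Db{A\modcat}$ by hypothesis this is bounded with $k$-finite terms, whence $F(A)\in\Db{B\modcat}$. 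Then for each simple $B$-module $S$ the vanishing of $\Hom(F(A),S[i])\cong H^i(G(S))$ for $i\gg 0$ (again since $G(S)\in\Db{A\modcat}$) shows that $F(A)$ has finite projective dimension as a complex, i.e.\ lies in $\Kb{\pmodcat{B}}$.

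Your sketch of (2) has the right shape but the object-tracking is tangled: you write ``$G(M)$'' for $M$ an $A$-module and place the $\Hom$ in $\D{B\Modcat}$, whereas $G$ has source $\D{B\Modcat}$. Once the directions are sorted, the argument is exactly dual to (1), using the identity $\Hom_{\D{B\Modcat}}(Y,D(B)[i])\cong D(H^{-i}(Y))$ to pass between cohomological boundedness of $F(X)$ and the behaviour of $G(D(B))$ against modules and simples.
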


\begin{Lem}\label{lem-rec-prop}
Let $A$, $B$ and $C$ be Artin algebras.
Suppose that there is a recollement among the derived categories
$\D{A\Modcat}$, $\D{B\Modcat}$ and $\D{C\Modcat}$:
\begin{align}\label{rec-der}
\xymatrixcolsep{4pc}\xymatrix{
\D{B\Modcat} \ar[r]|{i_*=i_!} &\D{A\Modcat} \ar@<-2ex>[l]|{i^*} \ar@<2ex>[l]|{i^!} \ar[r]|{j^!=j^*}  &\D{C\Modcat}. \ar@<-2ex>[l]|{j_!} \ar@<2ex>[l]|{j_{*}}
}
\end{align}
Then $i^{*}$ and $j_{!}$ restrict to $\mathscr{K}^b({\rm proj})${\rm ;} $i_{*}$ and $j^{!}$ restrict to $\mathscr{D}^b({\rm mod})$. Moreover, the following hold true.

{\rm (1)} The following conditions are equivalent$:$

{\rm (i)} the recollement {\rm (\ref{rec-der})} extends one step downwards$;$

{\rm (ii)} $i_{*}$ restricts to $\mathscr{K}^b({\rm proj})$$;$

{\rm (iii)} $i_{*}(B)\in\Kb{\pmodcat{A}}$$;$

{\rm (iv)} $j^{*}$ restricts to $\mathscr{K}^b({\rm proj})$$;$

{\rm (v)} $j^{*}(A)$ restricts to $\mathscr{K}^b({\pmodcat{C}})$$;$

{\rm (vi)} $i^!$ restricts to $\mathscr{D}^b({\rm mod})$$;$

{\rm (vii)} $j_*$ restricts to $\mathscr{D}^b({\rm mod})$$;$

{\rm (viii)} $i^!$ has a right adjoint$;$

{\rm (ix)} $j_*$ has a right adjoint.

In this case, the recollement {\rm (\ref{rec-der})} restricts to a left recollement and a right recollement
\begin{align*}
\xymatrixcolsep{4pc}\xymatrix{
\Kb{\pmodcat{B}} \ar[r]|{i_*=i_!} &\Kb{\pmodcat{A}} \ar@<-2ex>[l]|{i^*}  \ar[r]|{j^!=j^*}  &\Kb{\pmodcat{C}}, \ar@<-2ex>[l]|{j_!}
}
\end{align*}
\begin{align*}
\xymatrixcolsep{4pc}\xymatrix{
\Db{B\modcat} \ar[r]|{i_*=i_!} &\Db{A\modcat}  \ar@<2ex>[l]|{i^!} \ar[r]|{j^!=j^*}  &\Db{C\modcat}. \ar@<2ex>[l]|{j_{*}}
}
\end{align*}

{\rm (2)} The following conditions are equivalent$:$

{\rm (i)} the recollement {\rm (\ref{rec-der})} extends one step upwards$;$

{\rm (ii)} $i_{*}$ restricts to $\mathscr{K}^b({\rm inj})$$;$

{\rm (iii)} $i_{*}(D(B))\in \Kb{\imodcat{A}}$$;$

{\rm (iv)} $j^{*}$ restricts to $\mathscr{K}^b({\rm inj})$$;$

{\rm (v)} $j^{*}(D(A))\in \Kb{\imodcat{C}}$$;$

{\rm (vi)} $i^*$ restricts to $\mathscr{D}^b({\rm mod})$$;$

{\rm (vii)} $j_{!}$ restricts to $\mathscr{D}^b({\rm mod})$$;$

{\rm (viii)} $i^*$ has a left adjoint$;$

{\rm (ix)} $j_{!}$ has a left adjoint.

In this case, the recollement {\rm (\ref{rec-der})} restricts to a left recollement and a right recollement
\begin{align*}
\xymatrixcolsep{4pc}\xymatrix{
\Db{B\modcat} \ar[r]|{i_*=i_!} &\Db{A\modcat}  \ar@<-2ex>[l]|{i^*}  \ar[r]|{j^!=j^*}  &\Db{C\modcat}, \ar@<-2ex>[l]|{j_!}
}
\end{align*}
\begin{align*}
\xymatrixcolsep{4pc}\xymatrix{
\Kb{\imodcat{B}}  \ar[r]|{i_*=i_!} &\Kb{\imodcat{A}} \ar@<2ex>[l]|{i^!} \ar[r]|{j^!=j^*}  &\Kb{\imodcat{C}}. \ar@<2ex>[l]|{j_{*}}
}
\end{align*}
\end{Lem}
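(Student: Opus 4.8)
The plan is to reduce the whole statement to a handful of structural inputs: (i) the compact objects of $\D{A\Modcat}$ are exactly the perfect complexes $\Kb{\pmodcat{A}}$ (and symmetrically $\Kb{\imodcat{A}}$ is the ``coperfect'' analogue); (ii) Neeman's Brown representability, which guarantees that a coproduct‑preserving triangle functor out of $\D{A\Modcat}$ has a right adjoint; (iii) the standard fact that for an adjoint pair $(F,G)$ of triangle functors with source of $F$ compactly generated, $F$ preserves compact objects if and only if $G$ preserves small coproducts; and (iv) the adjunction lemma and the ``shift lemma'' stated immediately above. By Proposition~\ref{prop:recollement-vs-ses}(3), $i^{*}$ and $j_{!}$ preserve compact objects, hence by (i) restrict to $\Kb{{\rm proj}}$; applying the adjunction lemma to the adjoint pairs $(i^{*},i_{*})$ and $(j_{!},j^{!})$ then shows $i_{*}$ and $j^{!}$ restrict to $\Db{{\rm mod}}$. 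This is the unconditional part.

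For part (1) I would split the nine conditions into an ``$i$-side'' $\{(\mathrm{ii}),(\mathrm{iii}),(\mathrm{vi}),(\mathrm{viii})\}$ and a ``$j$-side'' $\{(\mathrm{iv}),(\mathrm{v}),(\mathrm{vii}),(\mathrm{ix})\}$, with $(\mathrm{i})$ being $(\mathrm{viii})\wedge(\mathrm{ix})$ by definition. On the $i$-side: $(\mathrm{ii})\Leftrightarrow(\mathrm{iii})$ by the shift lemma (every bounded complex of projectives is built from ${_B}B$ by shifts and mapping cones, so $i_{*}(B)\in\Kb{\pmodcat{A}}$ forces $i_{*}(\Kb{\pmodcat{B}})\subseteq\Kb{\pmodcat{A}}$); $(\mathrm{ii})\Leftrightarrow(\mathrm{vi})$ by the adjunction lemma for the pair $(i_{!},i^{!})=(i_{*},i^{!})$; and $(\mathrm{ii})\Leftrightarrow(\mathrm{viii})$ by combining (iii) above ($i_{*}=i_{!}$ preserves compacts iff $i^{!}$ preserves coproducts) with Brown representability ($i^{!}$ preserves coproducts iff $i^{!}$ has a right adjoint). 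The $j$-side is the verbatim analogue using the pair $(j^{*},j_{*})$ in place of $(i_{!},i^{!})$, giving $(\mathrm{iv})\Leftrightarrow(\mathrm{v})\Leftrightarrow(\mathrm{vii})\Leftrightarrow(\mathrm{ix})$.

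What remains, and what I expect to be the crux, is the bridge $(\mathrm{ii})\Leftrightarrow(\mathrm{iv})$ between the two sides, as this is the only place where the gluing structure of the recollement is used in an essential way rather than by formal adjunction calculus. For $(\mathrm{ii})\Rightarrow(\mathrm{iv})$ I would feed $X=A$ into the canonical triangle $j_{!}j^{*}(X)\to X\to i_{*}i^{*}(X)\to j_{!}j^{*}(X)[1]$: both $X$ and $i^{*}(X)$ are compact, $(\mathrm{ii})$ makes $i_{*}i^{*}(X)$ compact, so $j_{!}j^{*}(A)$ is compact; and $j_{!}$ reflects compactness, since for $\{Z_{\lambda}\}$ in $\D{C\Modcat}$ one has $\Hom(Y,\coprod Z_{\lambda})\cong\Hom(j_{!}Y,\coprod j_{!}Z_{\lambda})$ (because $j^{*}$ preserves coproducts and $j^{*}j_{!}=\Id$), whence compactness of $j_{!}Y$ descends to $Y$; thus $j^{*}(A)\in\Kb{\pmodcat{C}}$, which is $(\mathrm{v})$. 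For $(\mathrm{iv})\Rightarrow(\mathrm{ii})$ I would use the same triangle with $X$ an arbitrary object of $\Kb{\pmodcat{A}}$: $(\mathrm{iv})$ together with the unconditional restriction of $j_{!}$ makes $j_{!}j^{*}(X)$ compact, hence $i_{*}i^{*}(X)$ is compact; since $i^{*}$ is a Verdier quotient preserving compacts it is dense up to direct summands on compact objects (Thomason--Neeman), so every $Y\in\Kb{\pmodcat{B}}$ is a summand of some such $i^{*}(X)$, and then $i_{*}(Y)$ is a summand of the compact object $i_{*}i^{*}(X)$, giving $(\mathrm{ii})$. Once $(\mathrm{ii})\Leftrightarrow(\mathrm{iv})$ is in hand the two sides merge and $(\mathrm{i})$ follows; the restricted left and right recollements are then immediate, because conditions (R1)--(R4) for the original recollement restrict verbatim to the triangulated subcategories $\Kb{{\rm proj}}$ and $\Db{{\rm mod}}$ once all the relevant functors are known to land there.

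Part (2) is the dual of part (1): replace $\Kb{{\rm proj}}$ by $\Kb{{\rm inj}}$, ``preserves compacts'' by ``preserves products'', ``right adjoint'' by ``left adjoint'', and Brown representability by its dual, and use the \emph{second} clause of the adjunction lemma ($F$ restricts to $\Db{{\rm mod}}$ iff $G$ restricts to $\Kb{{\rm inj}}$). Note that this clause, combined with the unconditional facts that $i_{*}$ and $j^{*}$ restrict to $\Db{{\rm mod}}$, forces $i^{!}$ and $j_{*}$ to restrict to $\Kb{{\rm inj}}$ unconditionally, which is exactly what makes the dual of the bridge argument go through, now via the triangle $i_{*}i^{!}(X)\to X\to j_{*}j^{*}(X)\to i_{*}i^{!}(X)[1]$ applied to $X=D(A)$, with $i^{!}$ and $j_{*}$ playing the roles of $i^{*}$ and $j_{!}$. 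In all cases the main obstacle is the bridge step $(\mathrm{ii})\Leftrightarrow(\mathrm{iv})$: the rest is bookkeeping with the adjunction lemma, the shift lemma and Brown representability, whereas this step genuinely exploits the gluing triangles together with the density of $i^{*}$ (resp. the product/coproduct reflection properties of $j_{*}$, resp. $j_{!}$).
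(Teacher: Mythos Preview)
The paper does not give its own proof of this lemma: the sentence immediately preceding it (``The following results are well-known, see \cite{akly17,JYZ23}'') covers both this lemma and the adjunction lemma before it. So there is nothing to compare against except the cited literature, and your outline is in line with the standard arguments found there.

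Your argument is correct. One simplification is available for the bridge step, which you flagged as the crux. You prove $(\mathrm{ii})\Leftrightarrow(\mathrm{iv})$ via the gluing triangle together with Thomason--Neeman density of $i^{*}$ on compact objects; this works, but it is heavier than needed. By Proposition~\ref{prop:recollement-vs-ses}(2) the bottom row of the recollement, namely $j_{*}$ followed by $i^{!}$, is already a short exact sequence of triangulated categories, and Proposition~\ref{prop:recollement-vs-ses}(1) applied to that sequence gives $(\mathrm{viii})\Leftrightarrow(\mathrm{ix})$ in one stroke. Combined with the equivalences $(\mathrm{ii})\Leftrightarrow(\mathrm{viii})$ and $(\mathrm{iv})\Leftrightarrow(\mathrm{ix})$ that you already obtained from Brown representability, this closes the loop without Thomason's theorem. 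The same remark applies dually in part~(2), using the top row. A minor slip: in your argument that $j_{!}$ reflects compactness, the relevant inputs are that $j_{!}$ is fully faithful and preserves coproducts (as a left adjoint), not that $j^{*}$ does; the conclusion is unaffected.
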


\subsection{Nonnegative functors}
In this subsection, we shall recall the definition and some results of the nonnegative functors (see \cite{hp17,hx10}).

\begin{Def}{\rm A triangle functor $F: \Db{A\modcat}\ra \Db{B\modcat}$ is called {\bf nonnegative} if $F$ satisfies the following conditions:

$(1)$ $F(X)$ is isomorphic to a complex with zero cohomology in all negative degrees for all $X\in A\modcat$; and

$(2)$ $F(P)$ is isomorphic to a complex in $\Kb{\pmodcat{B}}$ with zero terms in all negative degrees for all $P\in\pmodcat{A}$.
}
\end{Def}
%
%For a nonnegative $F: \Db{A\modcat}\ra \Db{B\modcat}$, the following lemma describes the images of modules in $A\modcat$ under $F$.
%
%\begin{Lem}\label{lem-basic-non}
%{\rm (\cite[Lemma 4.5]{hp17})}
%Let $F: \Db{A\modcat}\ra \Db{B\modcat}$ be a nonnegative functor.
%For each $X\in A\modcat$, there is a triangle
%$$\cpx{P}_X\lraf{i_X}F(X)\lraf{\pi_X} M_X\lraf{\mu_X}\cpx{P}_X[1]$$
%in $\Db{B\modcat}$ with $M_X\in B\modcat$ and $\cpx{P}_X\in\mathscr{D}^{[1,n_X]}(\pmodcat{B})$ for some $n_X>0$.
%\end{Lem}

Suppose that $F: \Db{A\modcat}\ra \Db{B\modcat}$ is a nonnegative functor. By \cite[Lemma 4.5]{hp17}, 
for each $X\in A\modcat$, we can fix a triangle  $$\cpx{Q}_X\lraf{i_X}F(X)\lraf{\pi_X} V_X\lraf{\mu_X}\cpx{Q}_X[1]$$
in $\Db{B\modcat}$ with $V_X\in B\modcat$ and $\cpx{Q}_X\in\mathscr{K}^{[1,n_X]}(\pmodcat{B})$ for some $n_X>0$. In \cite[Section 4]{hp17}, Hu and Pan construct a functor  $\overline{F}:\stmodcat{A}\ra \stmodcat{B},\; X\mapsto V_X$, which is called the \emph{stable functor} of $F$. This stable functor has the following properties.

\begin{Lem}\label{lem-non}{\rm (see \cite[Proposition 4.11]{hx10} and \cite[Proposition 4.13]{hp17})} Let $F: \Db{A\modcat}\ra \Db{B\modcat}$ be a nonnegative functor. Suppose that $$0 \lra X\lra Y \lra Z\lra 0$$ is an exact sequence in $A\modcat$. Then there is an exact sequence
$$0 \lra \overline{F}(X) \lra \overline{F}(Y)\oplus Q \lra \overline{F}(Z)\lra 0$$ in $B\modcat$ for some projective $B$-module $Q$.
\end{Lem}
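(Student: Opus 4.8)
The statement to prove is Lemma~\ref{lem-non}, which asserts that a nonnegative triangle functor $F:\Db{A\modcat}\ra\Db{B\modcat}$ induces, via its stable functor $\overline{F}$, a "half-exact up to projective summand" behaviour: every short exact sequence $0\to X\to Y\to Z\to 0$ in $A\modcat$ is sent by $\overline{F}$ to a short exact sequence $0\to\overline{F}(X)\to\overline{F}(Y)\oplus Q\to\overline{F}(Z)\to 0$ in $B\modcat$ with $Q$ projective. The plan is to follow the construction of $\overline{F}$ recalled just above the statement: for each $A$-module $W$ we have a fixed triangle $\cpx{Q}_W\xrightarrow{i_W}F(W)\xrightarrow{\pi_W}V_W\xrightarrow{\mu_W}\cpx{Q}_W[1]$ in $\Db{B\modcat}$ with $V_W=\overline{F}(W)\in B\modcat$ and $\cpx{Q}_W\in\mathscr{K}^{[1,n_W]}(\pmodcat{B})$. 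Applying the triangle functor $F$ to the given short exact sequence (viewed as a triangle $X\to Y\to Z\to X[1]$ in $\Db{A\modcat}$) yields a triangle $F(X)\to F(Y)\to F(Z)\to F(X)[1]$ in $\Db{B\modcat}$.

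First I would assemble the octahedral/$3\times3$ diagram relating the triangle $F(X)\to F(Y)\to F(Z)\to$ to the three defining triangles for $X$, $Y$, $Z$. Concretely, one produces a commutative diagram of triangles whose rows are the defining triangles for $X,Y,Z$ and whose columns are $\cpx{Q}_X\to\cpx{Q}_Y\to\cpx{Q}_Z\to$, $F(X)\to F(Y)\to F(Z)\to$, and $V_X\to V_Y\to V_Z\to$; this is a standard consequence of the axioms (the nine-lemma for triangulated categories, using that the maps $F(X)\to F(Y)$ etc. lift compatibly — here one uses that $\mathrm{Hom}_{\Db{B\modcat}}(\cpx{Q}_X,V_Y[-1])$-type obstruction groups vanish because $\cpx{Q}_X$ is a bounded complex of projectives concentrated in \emph{positive} degrees and $V_Y$ is a module, so $\mathrm{Hom}(\cpx{Q}_X[k],V_Y)=0$ for $k\le 0$). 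This produces in particular a triangle $V_X\to V_Y\oplus(\text{something})\to V_Z\to V_X[1]$; the "something" arises from the connecting map and the fact that the lift need not be unique. The cleaner route, which is the one I expect the authors take, is to invoke the cited result \cite[Proposition 4.11]{hx10} / \cite[Proposition 4.13]{hp17} almost directly, reducing to checking the hypotheses of those propositions — but since we are asked to prove the statement, I would reconstruct the argument: build the third triangle $V_X\to \cpx{C}\to V_Z\to V_X[1]$ where $\cpx{C}$ is the cone-completion, show $\cpx{C}$ is isomorphic to $V_Y\oplus Q$ for a bounded complex of projectives $Q$ concentrated in a single degree $0$ (hence a projective module), and then use that a triangle among three modules which splits appropriately gives a short exact sequence.

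The two key technical points are: (a) identifying the third object $\cpx{C}$ in the resulting triangle as $V_Y\oplus Q$ with $Q$ an \emph{honest projective module}, and (b) converting the triangle of modules $0$-concentrated complexes into an actual short exact sequence $0\to\overline{F}(X)\to\overline{F}(Y)\oplus Q\to\overline{F}(Z)\to 0$. For (a): from the $3\times 3$ diagram, $\cpx{C}$ sits in a triangle $\cpx{Q}_Y'\to\cpx{C}\to V_Y\to\cpx{Q}_Y'[1]$ where $\cpx{Q}_Y'$ is built from $\cpx{Q}_X,\cpx{Q}_Y,\cpx{Q}_Z$ and a shift, so a priori $\cpx{Q}_Y'\in\mathscr{K}^{[0,N]}(\pmodcat{B})$ for some $N$; one must then argue, using the long exact cohomology sequence and the fact that $\cpx{C}$ has cohomology concentrated in degree $0$ (because $V_X,V_Z$ are modules, the triangle $V_X\to\cpx{C}\to V_Z\to V_X[1]$ forces $H^i(\cpx{C})=0$ for $i\ne 0,1$, and a further argument using nonnegativity kills degree $1$ — this is where condition (1) in the definition of nonnegative, that $F(X)$ has no negative cohomology, and condition (2), do real work), that $\cpx{C}$ is quasi-isomorphic to a module, and that the contribution $Q$ splitting off is projective because it is a bounded complex of projectives with zero cohomology except that it IS a stalk — more precisely one shows $\cpx{C}\cong V_Y\oplus Q$ in $\Db{B\modcat}$ with $Q$ a projective module by a syzygy/truncation argument. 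For (b): a triangle $L\to M\to N\to L[1]$ in $\Db{B\modcat}$ with $L,M,N$ all isomorphic to modules and with $\mathrm{Hom}(N,L[1])$ accounting for the class, corresponds to a short exact sequence $0\to L\to M\to N\to 0$ precisely when the connecting map $N\to L[1]$, viewed in $\mathrm{Ext}^1_B(N,L)$, is the class of that sequence — and here it always is, because the triangle comes from applying a functor to an honest short exact sequence's triangle, so $H^{-1}$ of the connecting vanishes and $H^0$ gives exactness on the nose.

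\medskip

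\noindent\textbf{Expected main obstacle.} The crux is step (a): proving that the extra summand $Q$ appearing in $\overline{F}(Y)\oplus Q$ is a genuine projective $B$-\emph{module} (a stalk complex in degree $0$) and not merely a bounded complex of projectives. This requires carefully controlling the cohomological concentration of the cone object through the $3\times 3$ diagram, and it is exactly here that both defining properties of a nonnegative functor — no negative cohomology on modules, and $F$ sending projectives into $\mathscr{K}^{\ge 0}(\pmodcat{B})$ — must be combined with the truncation/syzygy formalism of Section~\ref{sec-syzygy}. Once the cone is pinned down to be $\overline{F}(Y)\oplus(\text{projective module})$, extracting the short exact sequence from the resulting triangle of modules is routine. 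In the write-up I would most likely simply cite \cite[Proposition 4.11]{hx10} and \cite[Proposition 4.13]{hp17} for this, since the statement is quoted verbatim from there; the sketch above indicates how their proof goes in case a self-contained argument is wanted.
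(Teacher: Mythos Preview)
The paper provides no proof of this lemma at all: it is stated with the parenthetical citation ``(see \cite[Proposition 4.11]{hx10} and \cite[Proposition 4.13]{hp17})'' and then used as a black box. Your final remark --- that in the write-up you would simply cite those references --- is therefore exactly what the paper does, so on that level you are aligned.

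Your reconstruction sketch goes well beyond what the paper offers, and it is broadly faithful to how the cited references argue: build a $3\times 3$ diagram from the triangle $F(X)\to F(Y)\to F(Z)\to F(X)[1]$ and the defining triangles $\cpx{Q}_W\to F(W)\to V_W\to$, then analyze the third column to extract a triangle of modules $V_X\to V_Y\oplus Q\to V_Z\to$, and finally read off a short exact sequence. Your identification of the ``main obstacle'' --- showing the extra summand is an honest projective \emph{module} rather than just a bounded complex of projectives, and that this is where both nonnegativity conditions are used --- is on target. If you wanted to make the sketch self-contained you would need to be a bit more careful about the vanishing $\Hom(\cpx{Q}_W,V_{W'}[-1])=0$ (to get the required fillers in the $3\times3$ diagram) and about why the resulting triangle of degree-zero objects yields a genuine short exact sequence; but since the paper itself does not attempt any of this, there is nothing to compare against beyond the citation.
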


\begin{Lem}\label{lem-non-func}
Let $F: \Db{A\modcat}\ra \Db{B\modcat}$ be a nonnegative functor and $n\ge 0$. 

$(1)$ If $X\in \Omega^n(A\modcat)$, then $\overline{F}(X) \in \Omega^n(B\modcat)$.

$(2)$ If $X\in \Omega^{\infty}(A\modcat)$, then $\overline{F}(X) \in \Omega^{\infty}(B\modcat)$.
\end{Lem}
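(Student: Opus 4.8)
The plan is to prove Lemma \ref{lem-non-func} by induction on $n$, using the behavior of the stable functor $\overline{F}$ on short exact sequences (Lemma \ref{lem-non}) together with the closure properties of syzygy categories and the explicit description $\Omega^n(A\modcat)=\{K\oplus P\mid K\cong \Omega^n(M),\ P\in\pmodcat{A}\}$ recalled before Definition \ref{ed-def}.

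For part (1), first I would handle the base case $n=0$, which is trivial since $\Omega^0(A\modcat)=A\modcat$ and $\overline{F}$ already lands in $B\modcat$; alternatively one can start at $n=1$. For the inductive step, suppose $X\in\Omega^{n}(A\modcat)$. Then $X\cong\Omega(Y)\oplus P$ for some $Y\in\Omega^{n-1}(A\modcat)$ and some projective $A$-module $P$ (using $\Omega^n = \Omega\circ\Omega^{n-1}$ on objects up to projective summands). Take a projective cover $0\to \Omega(Y)\to Q\to Y\to 0$ with $Q\in\pmodcat{A}$. Applying Lemma \ref{lem-non} to this exact sequence yields an exact sequence
$$0\lra \overline{F}(\Omega(Y))\lra \overline{F}(Q)\oplus Q'\lra \overline{F}(Y)\lra 0$$
in $B\modcat$ for some projective $B$-module $Q'$. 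Now I need to know that $\overline{F}(Q)$ is projective: this follows from the definition of nonnegative functor, since $F(Q)\in\Kb{\pmodcat{B}}$ with zero terms in negative degrees, so in the fixed triangle $\cpx{Q}_Q\to F(Q)\to V_Q\to\cpx{Q}_Q[1]$ one can arrange $V_Q=\overline{F}(Q)$ to be a projective $B$-module (this is part of the construction in \cite[Section 4]{hp17} — I would cite it, or give the short argument splitting off the degree-zero projective term). Granting this, $\overline{F}(Q)\oplus Q'$ is projective, and by the inductive hypothesis $\overline{F}(Y)\in\Omega^{n-1}(B\modcat)$; hence the above exact sequence, having projective middle term, exhibits $\overline{F}(\Omega(Y))$ as a syzygy of $\overline{F}(Y)$, so $\overline{F}(\Omega(Y))\in\Omega^{n}(B\modcat)$. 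Finally, since $\overline{F}$ is additive (it is a functor on stable categories respecting direct sums up to projectives — or one uses $\overline{F}(X)\oplus(\text{proj})\cong\overline{F}(\Omega(Y))\oplus\overline{F}(P)\oplus(\text{proj})$ and $\overline{F}(P)$ projective), we get $\overline{F}(X)\in\Omega^n(B\modcat)$, completing the induction.

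For part (2), if $X\in\Omega^{\infty}(A\modcat)$ then in particular $X\in\Omega^{n}(A\modcat)$ for every $n\ge 0$, so by part (1) $\overline{F}(X)\in\Omega^n(B\modcat)$ for every $n\ge 0$. This is not quite the definition of $\Omega^{\infty}(B\modcat)$ — being an $n$-th syzygy for all $n$ does not a priori produce a single infinite coresolution — so I would instead work directly with an infinite exact sequence $0\to X\to P_0\to P_1\to\cdots$ with $P_i\in\pmodcat{A}$, break it into short exact sequences $0\to \Omega^{i}(X)\to P_i\to \Omega^{i+1}(X)\to 0$ (with $\Omega^0(X)=X$), apply Lemma \ref{lem-non} to each to obtain $0\to \overline{F}(\Omega^i(X))\to \overline{F}(P_i)\oplus Q_i'\to\overline{F}(\Omega^{i+1}(X))\to 0$ with $\overline{F}(P_i)\oplus Q_i'$ projective, and then splice these back together into an infinite exact sequence $0\to \overline{F}(X)\to R_0\to R_1\to\cdots$ with each $R_i$ projective (the splicing is the standard horseshoe-type bookkeeping, using that the connecting maps compose to zero). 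This shows $\overline{F}(X)\in\Omega^{\infty}(B\modcat)$.

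I expect the main obstacle to be the point that $\overline{F}$ sends projective modules to projective modules (equivalently, that in the defining triangle for a projective $P$ the object $V_P$ can be taken projective). This is exactly where the second defining condition of a nonnegative functor is used: $F(P)$ is represented by a bounded complex of projectives concentrated in degrees $\ge 0$, and the truncation triangle $\cpx{Q}_P\to F(P)\to V_P\to$ with $\cpx{Q}_P\in\mathscr{K}^{[1,n_P]}(\pmodcat{B})$ forces $V_P\simeq H^0$ of a complex of projectives in a single degree, i.e. a projective module. I would either cite the relevant statement from \cite{hp17,hx10} verbatim or spell out this one-line truncation argument; the rest of the proof is then routine induction and splicing as above.
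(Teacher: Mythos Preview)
Your proposal is correct and follows essentially the same approach as the paper: break the (finite or infinite) coresolution of $X$ by projectives into short exact sequences, apply Lemma \ref{lem-non} to each, use that $\overline{F}$ sends projectives to projectives, and splice the resulting sequences back together. The only cosmetic differences are that the paper organizes part (1) as a single direct splicing rather than an induction, and that in your part (2) the intermediate kernels should be called $K_i$ (as in the paper) rather than $\Omega^i(X)$, since they are cokernels in a coresolution, not syzygies; you are also more explicit than the paper about why $\overline{F}(P)$ is projective, which is a point the paper uses without comment.
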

\begin{proof}
(1) It follows from $X\in \Omega^n(A\modcat)$ that there is an $A$-module $Y$ such that we have the following exact sequence in $A\modcat$
$$0\lra X\lra P_{0}\lraf{f_0} P_{1}\lraf{f_1} P_{2}\lraf{f_2}\cdots \lraf{f_{n-2}} P_{n-1}\lraf{f_{n-1}} Y\lra 0$$
with $P_{i}\in\pmodcat{A}$ for each $i\in \mathbb{N}$.
Let $K_i$ be the kernel of $f_i$. Then $K_0=X$ and we have the following short exact sequences
\begin{equation*}
\begin{cases}
\xymatrix@C=1.5em@R=0.1em{
0\ar[r]& X\ar[r] & P_{0}\ar[r]& K_{1}\ar[r]&0,\\
0\ar[r]& K_{1}\ar[r] & P_{1}\ar[r]& K_{2}\ar[r]&0,\\
0\ar[r]& K_{2}\ar[r] & P_{2}\ar[r]& K_{3}\ar[r]&0,\\
&  &\vdots & &\\
0\ar[r]& K_{n-1}\ar[r] & P_{n-1}\ar[r]& Y\ar[r]&0.
}
\end{cases}
\end{equation*}
Applying the functor $\overline{F}$ to the above short exact sequences and by Lemma \ref{lem-non}(3), we can get the following short exact sequences
\begin{equation*}
\begin{cases}
\xymatrix@C=1.5em@R=0.1em{
0\ar[r]& \overline{F}(X)\ar[r]     & Q_{0}\ar[r]& \overline{F}(K_{1})\ar[r]&0,\\
0\ar[r]& \overline{F}(K_{1})\ar[r] & Q_{1}\ar[r]& \overline{F}(K_{2})\ar[r]&0,\\
0\ar[r]& \overline{F}(K_{2})\ar[r] & Q_{2}\ar[r]& \overline{F}(K_{3})\ar[r]&0,\\
&  &\vdots & &\\
0\ar[r]& \overline{F}(K_{n-1})\ar[r] & Q_{n-1}\ar[r]& \overline{F}(Y)\ar[r]&0
}
\end{cases}
\end{equation*}
where $_BQ_{i}$ is projective for each $0\le i\le n-1$.
And we get the following exact sequence
$$\xymatrix@C=1.5em@R=0.1em{
0\ar[r]&\overline{F}(X)\ar[r]
&Q_{0}\ar[r]&
Q_{1}\ar[r]&Q_{2}\ar[r]& \cdots \ar[r]& Q_{n-1}\ar[r]& \overline{F}(Y)\ar[r]&0.
}$$
Then  $\overline{F}(X)\in \Omega^{n}(B\modcat)$.

(2)  It follows from $X\in \Omega^{\infty}(A\modcat)$ that the
following exact sequence
$$0\lra X\lra P_{0}\lraf{f_0} P_{1}\lraf{f_1} P_{2}\lraf{f_2}\cdots$$
with $P_{i}\in\pmodcat{A}$ for each $i\in \mathbb{N}$.
Let $K_i$ be the kernel of $f_i$. Then $K_0=X$ and we have the following short exact sequences
\begin{equation*}
\begin{cases}
\xymatrix@C=1.5em@R=0.1em{
0\ar[r]& X\ar[r] & P_{0}\ar[r]& K_{1}\ar[r]&0,\\
0\ar[r]& K_{1}\ar[r] & P_{1}\ar[r]& K_{2}\ar[r]&0,\\
0\ar[r]& K_{2}\ar[r] & P_{2}\ar[r]& K_{3}\ar[r]&0,\\
&  &\vdots & &
}
\end{cases}
\end{equation*}
Applying the functor $\overline{F}$ to the above short exact sequences and by Lemma \ref{lem-non}(3), we can get the following short exact sequences
\begin{equation*}
\begin{cases}
\xymatrix@C=1.5em@R=0.1em{
0\ar[r]& \overline{F}(X)\ar[r]     & Q_{0}\ar[r]& \overline{F}(K_{1})\ar[r]&0,\\
0\ar[r]& \overline{F}(K_{1})\ar[r] & Q_{1}\ar[r]& \overline{F}(K_{2})\ar[r]&0,\\
0\ar[r]& \overline{F}(K_{2})\ar[r] & Q_{2}\ar[r]& \overline{F}(K_{3})\ar[r]&0,\\
&  &\vdots & &
}
\end{cases}
\end{equation*}
where $_BQ_{i}$ is projective for each $i\in\mathbb{N}$.
And we get the following exact sequence
$$\xymatrix@C=1.5em@R=0.1em{
0\ar[r]&\overline{F}(X)\ar[r]
&Q_{0}\ar[r]&
Q_{1}\ar[r]&Q_{2}\ar[r]& \cdots.
}$$
Then  $\overline{F}(X)\in \Omega^{\infty}(B\modcat)$.
\end{proof}

\section{Recollements and extension dimensions}\label{sect-ext-rec}

To investigate relationships among extension dimensions of algebras in recollements, it may be convenient to introduce the notion of \emph{homological widths of functors}. 

Let $A$ and $B$ be two Artin algebras. Suppose that $\mathscr{X}$ and $\mathscr{Y}$ are full subcategories of $\D{A\Modcat}$ and $\D{B\Modcat}$, respectively, and that  $A\modcat\subseteq \mathscr{X}$. For a given additive functor $F:\mathscr{X}\ra \mathscr{Y}$, set
$$
\inf{(F)}:=\inf\{n\in\mathbb{Z}\mid H^n(F(X))\neq 0 \mbox{ for some } X\in A\modcat\},
$$
$$
\sup{(F)}:=\sup\{n\in\mathbb{Z}\mid H^n(F(X))\neq 0 \mbox{ for some } X\in A\modcat\}.
$$
Note that $\inf(F)=+\infty$ if and only if $F(X)=0$ in $\D{B}$ for all $X\in A\modcat$ if and only if $\sup(F)=-\infty$. We define the \textit{homological width} of  $F$ to be $w(F):=\sup{(F)}-\inf{(F)}$.

\begin{Lem}\label{lem-wid}
Let $F:\D{A\Modcat}\ra \D{B\Modcat}$ be a triangle functor. Then the following statements are ture.

$(1)$ Suppose that $F$ has a left adjoint $E$. If $E(B)\in \Kb{\pmodcat{A}}$, then $$\inf(F)= -\sup(E(B)),\;\sup(F)= w(E(B))-\sup(E(B)),\;w(F)=w(E(B)).$$
%In particular, $E(B)\in\mathscr{K}^{[-\sup(F),-\inf(F)]}(\pmodcat{A})$.
If, in addition, $F(A)\in \Kb{\pmodcat{B}}$, then $$\sup(F(A))=w(E(B))-\sup(E(B))\text{ and }w(F(A))\ge w(E(B)).$$

$(2)$ Suppose that $F$ has a right adjoint $G$. If $G(D(B))\in \Kb{\imodcat{A}}$, then $$\inf(F)= -\inf(G(D(B)))-cw(G(D(B))),\;\sup(F)= -\inf(G(D(B))),\;w(F)=cw(G(D(B))).$$
%In particular, $G(B)\in\mathscr{K}^{[-\sup(F),-\inf(F)]}(\imodcat{A})$.
If, in addition, $F(D(A))\in \Kb{\imodcat{B}}$, then $$\inf(F(D(A))= -\inf(G(D(B)))-cw(G(D(B)))\text{ and }cw(F(D(A)))\ge cw(G(D(B))).$$
\end{Lem}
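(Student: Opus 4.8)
The plan is to prove the four quantitative identities in part $(1)$ by a direct ``shift-counting'' argument using the adjunction, and then dualize everything to get part $(2)$.

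First I would set up part $(1)$. Write $s := \sup(E(B))$ and $a := \inf(E(B))$, so $w(E(B)) = s - a$, and since $E(B) \in \Kb{\pmodcat{A}}$ we may take $E(B)$ to be a radical bounded complex of projectives concentrated in degrees $[a,s]$. For any $A$-module $X$ and any $n \in \mathbb{Z}$, the adjunction gives $H^n(F(X)) = \Hom_{\D{B}}(B, F(X)[n]) \cong \Hom_{\D{A}}(E(B), X[n]) \cong \Hom_{\Kb{\pmodcat{A}}}(E(B), X[n])$, the last step because $E(B)$ is a bounded complex of projectives and $X[n]$ is a stalk complex. Now $\Hom$ in the homotopy category between a bounded projective complex supported in degrees $[a,s]$ and a stalk complex $X[n]$ in degree $-n$ vanishes unless $a \le -n \le s$, i.e. $-s \le n \le -a$; and at the two extreme degrees one can detect nonvanishing for a suitable choice of $X$. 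Concretely, taking $X = A$ (or more carefully a large enough direct sum / the regular module), the complex $\Hom_A(E(B), A)$ computes these cohomologies and, because $E(B)$ is radical, $H^{-s}(\Hom_A(E(B), A)) = \Hom_A(E(B)^{s}, A) \ne 0$ and similarly at the bottom. This pins down $\inf(F) = -s$ and $\sup(F) = -a$, hence $w(F) = s - a = w(E(B))$, which unravels to exactly the three claimed formulas $\inf(F) = -\sup(E(B))$, $\sup(F) = w(E(B)) - \sup(E(B))$, $w(F) = w(E(B))$. For the ``in addition'' clause: if also $F(A) \in \Kb{\pmodcat{B}}$, then $\sup(F(A))$ is by definition at least $\sup\{n : H^n(F(A)) \ne 0\}$, and we have just computed that $\sup\{n : H^n(F(A)) \ne 0\} = \sup(F) = w(E(B)) - s$ provided $A$ is among the test modules realizing the supremum—this requires the small observation that the global $\sup$ over all $X \in A\modcat$ is already attained at $X = A$, since every module is a quotient of a projective and the relevant top cohomology is controlled by the top term of the projective resolution. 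Then $w(F(A)) = \sup(F(A)) - \inf(F(A)) \ge \sup(F(A)) - \bigl(-s\bigr)$ would need $\inf(F(A)) \le -s = \inf(F)$, which is automatic; combined with $\sup(F(A)) = w(E(B)) - s$ this gives $w(F(A)) \ge w(E(B))$.

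Then part $(2)$ is the Matlis/Nakayama dual of part $(1)$. The functor $F$ has right adjoint $G$, and one uses $H^n(F(X)) \cong D\Hom_{\D{B}}(F(X)[n], D(B)) $... more precisely the cleanest route is to pass to opposite algebras: apply part $(1)$ to the pair of adjoint functors obtained by applying $D$ everywhere, noting that $D$ interchanges projectives and injectives, turns $\Kb{\pmodcat{A}}$ into $\Kb{\imodcat{A^{\opp}}}$, turns homological width into homological cowidth, and turns a right adjoint into a left adjoint. Tracking the degree reversal caused by $D$ (which sends degree $i$ to degree $-i$) converts $\inf(E(B)) = -\sup(F)$-type statements into the stated formulas $\inf(F) = -\inf(G(D(B))) - cw(G(D(B)))$, $\sup(F) = -\inf(G(D(B)))$, $w(F) = cw(G(D(B)))$, and similarly the ``in addition'' clause. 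I would write this dualization explicitly enough to be checkable but would not re-derive the homotopy-category vanishing lemma from scratch.

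I expect the main obstacle to be the claim that the global supremum and infimum defining $\inf(F)$, $\sup(F)$ over \emph{all} $X \in A\modcat$ are already witnessed by the single module ${}_AA$ (equivalently, by projectives), together with the precise nonvanishing at the two boundary degrees; this is where radicality of $E(B)$ is essential, and one must argue that $\Hom_{\Kb{\pmodcat{A}}}(E(B), P)$ for $P$ projective is just $\Hom_A(E(B)^{-\bullet}, P)$ with no homotopies to kill, so its top cohomology $\Hom_A(E(B)^{s}, -)$ applied to $P = A$ is nonzero precisely because $E(B)^{s} \ne 0$ (the differential into $E(B)^{s}$ being radical prevents $\Hom_A(E(B)^s, A) \to \Hom_A(E(B)^{s-1}, A)$ from being injective on a summand, so the cohomology is nonzero). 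Everything else is bookkeeping with the truncation/shift conventions already fixed in Section 2.
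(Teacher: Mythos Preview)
Your setup conflates $\inf(E(B))$ (the cohomological infimum) with the bottom degree of the minimal projective representative: for a module $M$ in degree $0$ with $\pd_A(M) = d > 0$ one has $\inf(M) = \sup(M) = 0$ but the minimal complex lives in $[-d,0]$ and $w(M) = d$. The paper correctly takes the bottom degree to be $\sup(E(B)) - w(E(B))$. More seriously, the witness $X = A$ does \emph{not} detect the lower endpoint $\inf(F)$. If $\cpx{P}$ is the minimal representative of $E(B)$ with top degree $t$, then $\Hom_{\K{A}}(\cpx{P}, A[-t]) = \ker\bigl((d^{t-1})^* : \Hom_A(P^t,A) \to \Hom_A(P^{t-1},A)\bigr)$, and radicality of $d^{t-1}$ does not make this kernel nonzero. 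For $A$ the path algebra of $1\to 2$, $B = k$, and $E = S_2[-1]\otimes_k -$ (so $\cpx{P} = (P_1 \hookrightarrow P_2)$ in degrees $[0,1]$), the map $(d^0)^*$ is injective, hence $H^{-1}(F(A)) = 0$ while $\inf(F) = -1$. Your final paragraph attempts precisely this nonvanishing and fails for this reason; the correct witness for $\inf(F)$ is $X = H^t(\cpx{P})$, not $A$. Likewise, in the ``in addition'' clause you assert that $\inf(F(A)) \le \inf(F)$ is automatic, but since $A$ is merely one test module the trivial direction is $\inf(F(A)) \ge \inf(F)$, so your route to $w(F(A)) \ge w(E(B))$ also breaks.

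The paper sidesteps the search for witnesses for the three main identities. After the easy vanishing gives $\inf(F) \ge -t$ and $\sup(F) \le -s$ (with $t$ the top and $s$ the bottom degree of $\cpx{P}$), it observes that $\Hom_{\K{A}}(E(B), X[n]) = 0$ for \emph{all} $X \in A\modcat$ and all $n \notin [\inf(F),\sup(F)]$ forces $E(B) \in \mathscr{K}^{[-\sup(F),-\inf(F)]}(\pmodcat{A})$, whence $w(E(B)) \le w(F)$; combining the two inequalities then pins down all three quantities simultaneously. For the ``in addition'' clause the paper argues $H^{-s}(F(A)) \ne 0$ and $H^{-t}(F(A)) \ne 0$ directly from $d^s$ not being a split injection and $P^t \ne 0$. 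Part $(2)$ is carried out in the paper by a parallel direct computation rather than by formal dualization.
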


\begin{proof}
(1) For each $n\in \mathbb{Z}$ and $X\in A\modcat$,
\begin{align}\label{iso-ch}
H^n(F(X))\simeq\Hom_{\D{B}}(B,F(X)[n])\simeq\Hom_{\D{A}}(E(B),X[n])\simeq\Hom_{\K{A}}(E(B),X[n]).
\end{align}
Denote $t:=\sup(E(B))$ and $s:=\sup(E(B))-w(E(B))$.
Since $E(B)\in \Kb{\pmodcat{A}}$, $E(B)$ can be written as
$$
\cpx{P}:\; 0\lra P^{s}\lraf{d^{s}} P^{s+1}\lraf{d^{s+1}} P^{s+2}\cdots \lra P^{t-1}\lraf{d^{t-1}} P^{t} \lra 0
$$
with each term being projective. It follows the isomorphisms (\ref{iso-ch}) that
$$H^n(F(X))\simeq\Hom_{\K{A}}(E(B),X[n])=0$$ 
for all $n<-t$ or $n>-s$. Thus
$$\inf(F)\ge -t,\;\sup(F)\le -s,\;w(F)\le t-s=w(E(B)).$$

Denote $u:=\inf(F)$ and $v:=\sup(F)$. It follows from the isomorphisms (\ref{iso-ch})
that $\Hom_{\K{A}}(E(B),X[n])=0$ for all $n<u$ or $n>v$. This implies $E(B)\in \mathscr{K}^{[-v,-u]}(\pmodcat{A})$ and $w(E(B))\le v-u=w(F)$. Hence $w(E(B))=w(F)$.

Suppose $F(A)\in \Kb{\pmodcat{B}}$. Due to  $w(E(B))=t-s$,  $d^s$ is not a split injection and $P^t\neq 0$.  Also
$H^n(F(A))\simeq\Hom_{\K{A}}(E(B),A[n])$. Then $H^n(F(A))\neq 0$ for $n=-s$ or $n=-t$. Thus $\sup(F(A))=-s$ and $w(F(A))\ge t-s=w(E(B))$.

(2) %To calculate cohomologies of complexes, we conside the usual duality  $$D:=\Hom_k(-,J): B\modcat\lra B^{\rm {op}}\modcat,$$ where $\rad(k)$ denotes the radical of $k$ and $J$ denotes the injective envelope of $k/\rad(k)$.
For each $n\in \mathbb{Z}$ and $X\in A\modcat$,
\begin{align}\label{iso-dh}
D(H^n(F(X)))&\simeq\Hom_{k}(H^n(F(X)),J)
\simeq\Hom_{\K{k}}(F(X)[n],J)\nonumber\\
&\simeq\Hom_{\K{k}}(B\otimes_BF(X)[n],J)
\simeq\Hom_{\K{B}}(F(X)[n],D(B))\nonumber\\
&\simeq\Hom_{\D{B}}(F(X)[n],D(B))\simeq\Hom_{\D{A}}(X[n],G(D(B)))\nonumber\\
&\simeq\Hom_{\K{A}}(X[n],G(D(B))).
\end{align}
Denote $s:=\inf(D(B))$ and $t:=\inf(D(B))+cw(D(B))$.
Since $G(D(B))\in \Kb{\imodcat{A}}$, $G(D(B))$ can be written as
$$
\cpx{I}:\; 0\lra I^{s}\lraf{d^{s}} I^{s+1}\lraf{d^{s+1}} I^{s+2}\cdots \lra I^{t-1}\lraf{d^{t-1}} I^{t} \lra 0
$$
with each term being injective. It follows from (\ref{iso-dh}) that $H^n(F(X))=0$ for $n<-t$ or $n>-s$. Then $$\inf(F)\ge -t,\;\sup(F)\le -s,\;w(F)\le t-s=cw(G(D(B))).$$

Denote $u:=\inf(F)$ and $v:=\sup(F)$. It follows from (\ref{iso-dh}) that $\Hom_{\K{A}}(X[n],G(D(B)))=0$ for all $X\in A\modcat$ and $n<-u$ or $n>-v$. This implies $G(D(B))\in \mathscr{K}^{[-v,-u]}(\imodcat{A})$ and $w(F)=cw(G(D(B)))\le v-u=w(F)$. Hence $cw(G(D(B)))=w(F)$.

Suppose $F(D(A))\in \Kb{\imodcat{B}}$.  It follows from (\ref{iso-dh}) that  $$D(H^n(F(D(A))))\simeq\Hom_{\K{A}}(D(A)[n],G(D(B))).$$ Due to  $cw(G(D(B)))=t-s$,  $d^{t-1}$ is not a split surjection and $I^s\neq 0$. Thus
$H^n(F(A))\neq 0$ for $n=-s$ or $n=-t$.
Hence $\inf(F(D(A))= -t$ and $cw(G(D(B)))\ge cw(G(D(B)))$.
\end{proof}

\begin{Lem}\label{lemma-fun-sys}
Let $F:\D{A\Modcat}\ra \D{B\Modcat}$ be a triangle functor with $w(F)<\infty$ and $t\in \mathbb{Z}$. Let $X$ be an $A$-module and there exists an $A$-module $M$ such that $X\in [M]_{m+1}$ for some nonegative integer $m$. Then

$(1)$ $\Omega_{\mathscr{D}}^{p}(F(X)[t])\in [ \Omega_{\mathscr{D}}^p(F(M)[t])]_{m+1}$ for any $p\ge -\inf(F)+t$.

$(2)$ $\Omega^{\mathscr{D}}_p(F(X)[t])\in [ \Omega^{\mathscr{D}}_p(F(M)[t])]_{m+1}$ for any $p\ge \sup(F)-t$.
\end{Lem}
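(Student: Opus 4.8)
The plan is to reduce the statement to the behavior of syzygy (resp. cosyzygy) complexes under short exact sequences, using the stacked short exact sequences that witness $X\in [M]_{m+1}$, and then apply Lemma~\ref{lem-omega-shift} (resp. Lemma~\ref{lem-omega-coshift}) after transporting everything through $F$. So first I would unfold the definition of $X\in[M]_{m+1}$ to obtain the system of short exact sequences
$$0\to M_i\to X_i\oplus X_i'\to X_{i+1}\to 0,\quad 0\le i\le m-1,$$
(with $X_0=X$, $M_i\in[M]_1$, $X_{i+1}\in[M]_{m-i}$) exactly as in the proof of Lemma~\ref{lem-ext-pi}(1). Since $F$ is a triangle functor, applying $F$ to each short exact sequence — viewed as a triangle in $\D{A\Modcat}$ — and then shifting by $[t]$ yields triangles $F(M_i)[t]\to F(X_i\oplus X_i')[t]\to F(X_{i+1})[t]\to F(M_i)[t][1]$ in $\Db{B\modcat}$. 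The point where we must be careful is that these complexes should lie in $\Db{B\modcat}$, i.e.\ be cohomologically bounded; this is where the hypothesis $w(F)<\infty$ enters, guaranteeing $\inf(F),\sup(F)\in\mathbb{Z}$, hence $F$ carries $A\modcat$ (and all the modules appearing above) into $\mathscr{D}^{[\inf(F),\sup(F)]}(B\modcat)$.

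Next, for part~(1), I would apply the syzygy functor $\Omega_{\mathscr{D}}^p$ to each of these triangles. By Lemma~\ref{lem-omega-shift}, each triangle
$$F(M_i)[t]\to F(X_i)[t]\oplus F(X_i')[t]\to F(X_{i+1})[t]\to F(M_i)[t][1]$$
produces a triangle
$$\Omega_{\mathscr{D}}^p(F(M_i)[t])\to \Omega_{\mathscr{D}}^p(F(X_i)[t])\oplus\Omega_{\mathscr{D}}^p(F(X_i')[t])\oplus P_i\to \Omega_{\mathscr{D}}^p(F(X_{i+1})[t])\to \Omega_{\mathscr{D}}^p(F(M_i)[t])[1]$$
for some projective $B$-module $P_i$, using also Lemma~\ref{lem-prop-omega}(6) to split $\Omega_{\mathscr{D}}^p$ across the direct sum. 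Now the key numerical input: since $F(X_{i+1})[t]\in\mathscr{D}^{[\inf(F)-t,\sup(F)-t]}(B\modcat)$ and we take $p\ge -\inf(F)+t$, i.e.\ $p\ge -(\inf(F)-t)$, Lemma~\ref{lem-prop-omega}(2) tells us $\Omega_{\mathscr{D}}^p(F(X_{i+1})[t])$ (and likewise for $X_i$, $X_i'$, $M_i$, $M$, all of which lie in the same cohomological range after applying $F[t]$) is isomorphic to an honest $B$-module. Therefore each of the above triangles is (up to the projective summand, which is harmless in $[-]_1$) an honest short exact sequence of $B$-modules. Feeding these $m$ short exact sequences back into the definition of $[-]_{m+1}$, together with $\Omega_{\mathscr{D}}^p(F(M_i)[t])\in[\Omega_{\mathscr{D}}^p(F(M)[t])]_1$ — which follows because $M_i\in[M]_1=\add(M)$, so $F(M_i)[t]\in\add(F(M)[t])$ in $\Db{B\modcat}$ and $\Omega_{\mathscr{D}}^p$ preserves $\add$ by Lemma~\ref{lem-prop-omega}(6) — gives $\Omega_{\mathscr{D}}^p(F(X)[t])\in[\Omega_{\mathscr{D}}^p(F(M)[t])]_{m+1}$, as desired. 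An induction on $m$ (base case $m=0$: $X\in\add(M)$, trivial) makes this rigorous, mirroring the structure of Lemma~\ref{lem-ext-pi}(1).

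For part~(2) the argument is formally dual: apply the cosyzygy functor $\Omega^{\mathscr{D}}_p$ to the same triangles, invoke Lemma~\ref{lem-omega-coshift} and Lemma~\ref{lem-prop-omega-co}(6) to get triangles with an injective summand, and use Lemma~\ref{lem-prop-omega-co}(2): for $p\ge\sup(F)-t = \sup(F[t]$-range$)$, each $\Omega^{\mathscr{D}}_p$ of a complex in $\mathscr{D}^{[\inf(F)-t,\sup(F)-t]}(B\modcat)$ is isomorphic to a $B$-module, so again the triangles become short exact sequences of modules, and the same bookkeeping yields $\Omega^{\mathscr{D}}_p(F(X)[t])\in[\Omega^{\mathscr{D}}_p(F(M)[t])]_{m+1}$. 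The main obstacle — really the only subtle point — is the uniform cohomological bound: one needs that a single pair of integers $\inf(F)\le\sup(F)$ controls the location of $F(N)$ for \emph{all} the modules $N$ that show up (the $X_i$, $X_i'$, $M_i$, and $M$ itself), which is exactly what $w(F)<\infty$ provides, and then that the chosen threshold on $p$ is large enough to collapse every relevant (co)syzygy complex to a module simultaneously. Everything else is a routine transcription of the proof of Lemma~\ref{lem-ext-pi}(1) with ``$\Omega^s$ of a module'' replaced by ``$\Omega_{\mathscr{D}}^p$ of a complex'' and ``short exact sequence'' replaced by ``triangle, which is a short exact sequence once the outer terms are modules.''
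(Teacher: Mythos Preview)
Your proposal is correct and follows essentially the same route as the paper: unfold $X\in[M]_{m+1}$ into a chain of short exact sequences, push them through $F(-)[t]$ to obtain triangles in $\Db{B\modcat}$, apply $\Omega_{\mathscr{D}}^p$ (resp.\ $\Omega^{\mathscr{D}}_p$) via Lemma~\ref{lem-omega-shift} (resp.\ Lemma~\ref{lem-omega-coshift}), and use the cohomological bound from $w(F)<\infty$ together with Lemma~\ref{lem-prop-omega}(2) (resp.\ Lemma~\ref{lem-prop-omega-co}(2)) to see that for $p$ above the stated threshold all terms are modules, whence the triangles are short exact sequences and the $[-]_{m+1}$ bookkeeping goes through. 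The only cosmetic difference is that the paper makes the passage from ``triangle of modules'' to ``short exact sequence'' explicit by applying the cohomological functors $H^i$ for $i=-1,0,1$, whereas you phrase it as the triangle being ``an honest short exact sequence''; these amount to the same observation.
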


\begin{proof}
It follows from $X\in [M]_{m+1}$ that there are short exact sequences
\begin{equation*}
\begin{cases}
\xymatrix@C=1.5em@R=0.1em{
0\ar[r]& M_0\ar[r] & X\oplus X_{0}'\ar[r]& X_{1}\ar[r]&0,\\
0\ar[r]& M_1\ar[r] & X_{1}\oplus X_{1}'\ar[r]& X_{2}\ar[r]&0,\\
&&\vdots&&\\
0\ar[r]& M_{m-1}\ar[r] & X_{m-1}\oplus X_{m-1}'\ar[r]& X_{m}\ar[r]&0
}
\end{cases}
\end{equation*}
in $A\modcat$ for some $A$-modules $X_i'$ such that $M_i\in [M]_1$ and $X_{i+1}\in [M]_{m-i}$ for each $0\le i\le m-1$. 
Then we have triangles in $\Db{A\modcat}$:
\begin{equation}\label{tri-bb}
\begin{cases}
\xymatrix@C=1.5em@R=0.1em{
 M_0\ar[r] & X\oplus X_{0}'\ar[r]& X_{1}\ar[r]&M_0[1],\\
 M_1\ar[r] & X_{1}\oplus X_{1}'\ar[r]& X_{2}\ar[r]&M_1[1],\\
&\vdots&&\\
 M_{m-1}\ar[r] & X_{m-1}\oplus X_{m-1}'\ar[r]& X_{m}\ar[r]&M_{m-1}[1]
}
\end{cases}
\end{equation}
Applying the functor $[t]\circ F$ to the triangles (\ref{tri-bb}), we have triangles in $\Db{B\modcat}$:
\begin{equation}\label{f-t-tri}
\begin{cases}
\xymatrix@C=1.5em@R=0.1em{
 F(M_0)[t]\ar[r] & F(X)[t]\oplus F(X_{0}')[t]\ar[r]& F(X_{1})[t]\ar[r]&F(M_0)[t+1],\\
 F(M_1)[t]\ar[r] & F(X_{1})[t]\oplus F(X_{1}')[t]\ar[r]& F(X_{2})[t]\ar[r]&F(M_1)[t+1],\\
&\vdots&&\\
 F(M_{m-1})[t]\ar[r] & F(X_{m-1})[t]\oplus F(X_{m-1}')[t]\ar[r]& F(X_{m})[t]\ar[r]&F(M_{m-1})[t+1]
}
\end{cases}
\end{equation}

(1) By Lemma \ref{lem-omega-shift}, for each $p\in \mathbb{Z}$, applying $ \Omega_{\mathscr{D}}^p$ to the triangle (\ref{f-t-tri}), we have triangles in $\Db{B\modcat}$:
{\small\begin{equation}\label{tri-omega-fx}
\begin{cases}
\xymatrix@C=1.5em@R=0.1em{
 \Omega_{\mathscr{D}}^p(F(M_0)[t])\ar[r] 
& \Omega_{\mathscr{D}}^p(F(X)[t])\oplus \Omega_{\mathscr{D}}^p(F(X_{0}')[t])\oplus Q_0\ar[r]
& \Omega_{\mathscr{D}}^p(F(X_{1})[t])\ar[r]& \Omega_{\mathscr{D}}^p(F(M_0)[t])[1],\\
 \Omega_{\mathscr{D}}^p(F(M_1)[t])\ar[r] 
& \Omega_{\mathscr{D}}^p(F(X_{1})[t])\oplus \Omega_{\mathscr{D}}^p(F(X_{1}')[t])\oplus Q_1
\ar[r]& \Omega_{\mathscr{D}}^p(F(X_{2})[t])\ar[r]&\Omega_{\mathscr{D}}^p(F(M_1)[t])[1],\\
&\vdots&&\\
 \Omega_{\mathscr{D}}^p(F(M_{m-1})[t])\ar[r] & \Omega_{\mathscr{D}}^p(F(X_{m-1})[t])\oplus \Omega_{\mathscr{D}}^p(F(X_{m-1}')[t])\oplus Q_{m-1}
\ar[r]& \Omega_{\mathscr{D}}^p(F(X_{m})[t])\ar[r]&\Omega_{\mathscr{D}}^p(F(M_{m-1})[t])[1]
}
\end{cases}
\end{equation}}
for some $Q_i\in\pmodcat{B}$, $0\le i\le m-1$. 

Suppose $p\ge -\inf(F)+t$.
Notice $w(F)<\infty$. Then for each $W\in A\modcat$, $F(W)\in \mathscr{D}^{[\inf(F),\sup(F)]}(B\modcat)$ and $F(W)[t] \in \mathscr{D}^{[\inf(F)-t,\sup(F)-t]}(B\modcat).$  
By Lemma \ref{lem-prop-omega} (2), 
all term in the triangles (\ref{tri-omega-fx}) are $B$-modules. Applying the cohomological functors $H^i:=\Hom_{\Db{B\modcat}}(B,-[i])$, $i=-1,0,1$ 
to the triangles (\ref{tri-omega-fx}), we have short exact sequences
{\small\begin{equation}\label{tri-omega-fx-ex}
\begin{cases}
\xymatrix@C=1.5em@R=0.1em{
0\ar[r]& \Omega_{\mathscr{D}}^p(F(M_0)[t])\ar[r] 
& \Omega_{\mathscr{D}}^p(F(X)[t])\oplus \Omega_{\mathscr{D}}^p(F(X_{0}')[t])\oplus Q_0\ar[r]
& \Omega_{\mathscr{D}}^p(F(X_{1})[t])\ar[r]&0,\\
0\ar[r]&  \Omega_{\mathscr{D}}^p(F(M_1)[t])\ar[r] 
& \Omega_{\mathscr{D}}^p(F(X_{1})[t])\oplus \Omega_{\mathscr{D}}^p(F(X_{1}')[t])\oplus Q_1
\ar[r]& \Omega_{\mathscr{D}}^p(F(X_{2})[t])\ar[r]&0,\\
&&\vdots&\\
0\ar[r]&  \Omega_{\mathscr{D}}^p(F(M_{m-1})[t])\ar[r] & \Omega_{\mathscr{D}}^p(F(X_{m-1})[t])\oplus \Omega_{\mathscr{D}}^p(F(X_{m-1}')[t])\oplus Q_{m-1}
\ar[r]& \Omega_{\mathscr{D}}^p(F(X_{m})[t])\ar[r]&0
}
\end{cases}
\end{equation}}
Then $ \Omega_{\mathscr{D}}^p(F(X)[t])\in [ \Omega_{\mathscr{D}}^p(F(M)[t])]_{m+1}$.

(2) By Lemma \ref{lem-omega-coshift}, for each $p\in \mathbb{Z}$, applying $ \Omega^{\mathscr{D}}_p$ to the triangle (\ref{f-t-tri}), we have triangles in $\Db{B\modcat}$:
{\small\begin{equation}\label{tri-omega-fx-co}
\begin{cases}
\xymatrix@C=1.5em@R=0.1em{
\Omega^{\mathscr{D}}_p(F(M_0)[t])\ar[r] 
&\Omega^{\mathscr{D}}_p(F(X)[t])\oplus \Omega^{\mathscr{D}}_p(F(X_{0}')[t])\oplus J_0\ar[r]
&\Omega^{\mathscr{D}}_p(F(X_{1})[t])\ar[r]&\Omega^{\mathscr{D}}_p(F(M_0)[t])[1],\\
\Omega^{\mathscr{D}}_p(F(M_1)[t])\ar[r] 
&\Omega^{\mathscr{D}}_p(F(X_{1})[t])\oplus \Omega^{\mathscr{D}}_p(F(X_{1}')[t])\oplus J_1
\ar[r]&\Omega^{\mathscr{D}}_p(F(X_{2})[t])\ar[r]&\Omega^{\mathscr{D}}_p(F(M_1)[t])[1],\\
&\vdots&&\\
\Omega^{\mathscr{D}}_p(F(M_{m-1})[t])\ar[r] &\Omega^{\mathscr{D}}_p(F(X_{m-1})[t])\oplus \Omega^{\mathscr{D}}_p(F(X_{m-1}')[t])\oplus J_{m-1}
\ar[r]&\Omega^{\mathscr{D}}_p(F(X_{m})[t])\ar[r]&\Omega^{\mathscr{D}}_p(F(M_{m-1})[t])[1]
}
\end{cases}
\end{equation}}
for some $J_i\in\imodcat{B}$, $0\le i\le m-1$. 

Suppose $p\ge \sup(F)-t$.
Note that $F(W)[t] \in \mathscr{D}^{[\inf(F)-t,\sup(F)-t]}(B\modcat)$  for all $W\in A\modcat$.
By Lemma \ref{lem-prop-omega}(2), 
all term in the triangles (\ref{tri-omega-fx-co}) are $B$-modules. Applying the cohomological functors $H^i:=\Hom_{\Db{B\modcat}}(B,-[i])$, $i=-1,0,1$ 
to the triangles (\ref{tri-omega-fx-co}), we have short exact sequences
{\small\begin{equation}\label{tri-omega-fx-ex-co}
\begin{cases}
\xymatrix@C=1.5em@R=0.1em{
0\ar[r]& \Omega^{\mathscr{D}}_p(F(M_0)[t])\ar[r] 
& \Omega^{\mathscr{D}}_p(F(X)[t])\oplus \Omega^{\mathscr{D}}_p(F(X_{0}')[t])\oplus Q_0\ar[r]
& \Omega^{\mathscr{D}}_p(F(X_{1})[t])\ar[r]&0,\\
0\ar[r]& \Omega^{\mathscr{D}}_p(F(M_1)[t])\ar[r] 
& \Omega^{\mathscr{D}}_p(F(X_{1})[t])\oplus \Omega^{\mathscr{D}}_p(F(X_{1}')[t])\oplus Q_1
\ar[r]& \Omega^{\mathscr{D}}_p(F(X_{2})[t])\ar[r]&0,\\
&&\vdots&\\
0\ar[r]&  \Omega^{\mathscr{D}}_p(F(M_{m-1})[t])\ar[r] & \Omega^{\mathscr{D}}_p(F(X_{m-1})[t])\oplus \Omega^{\mathscr{D}}_p(F(X_{m-1}')[t])\oplus Q_{m-1}
\ar[r]& \Omega^{\mathscr{D}}_p(F(X_{m})[t])\ar[r]&0
}
\end{cases}
\end{equation}}
Then $\Omega^{\mathscr{D}}_p(F(X)[t])\in [ \Omega^{\mathscr{D}}_p(F(M)[t])]_{m+1}$.
\end{proof}

\begin{Lem}\label{lem-adjoint}
Let $A$ and $B$ be two Artin algebras. Suppose that there are triangle functors
\begin{align*}
\xymatrixcolsep{4pc}\xymatrix{
\D{B\Modcat} \ar@<-2ex>[r]|{G}\ar@<2ex>[r]|{E}
&\D{A\Modcat} \ar[l]|{F}
}
\end{align*}
between the unbounded derived categories of $A$ and $B$ such that $(E,F)$ and $(F,G)$ are adjoint pairs. If $E$ and $F$ restrict to $\Kb{{\rm proj}}$, then the following is true.

$(1)$ $w(F)\le w(G)<\infty$, $\sup(F)+\sup(G)=w(F(A))=w(G)$, and $\inf(F)+\inf(G)=-w(E(B))=-w(F)$.

$(2)$ If $G$ is fully faithful, then $\ed(B)\le \ed(A)+w(F(A))$.

$(3)$ If $F$ is fully faithful, then $\ed(A)\le \ed(B)+w(F(A))$.

If, in addition, $G$ restricts to $\Kb{{\rm proj}}$, then the following is true.

$(4)$ If $G$ is fully faithful, then $$\oed(B)\le \oed(A)\text{ and }\ed(\Omega^{\infty}(B\modcat))\le \ed(\Omega^{\infty}A\modcat)).$$

$(5)$ If $F$ is fully faithful, then $$\oed(A)\le \oed(B)\text{ and }\ed(\Omega^{\infty}(A\modcat))\le \ed(\Omega^{\infty}(B\modcat)).$$
\end{Lem}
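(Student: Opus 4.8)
\emph{Approach.} The plan is to establish (1) first --- its homological--width identities govern all the numerics below --- and then to obtain (2)--(5) by one uniform device: given a module over one of the algebras, reduce the complex it becomes under $G$ or $F$ to an honest module by a suitable (co)syzygy, transport the resulting comparison back using the syzygy calculus of Section~\ref{sec-syzygy} (Lemmas~\ref{lem-prop-omega}--\ref{lem-omega-coshift}), and conclude with Lemmas~\ref{lemma-fun-sys} and~\ref{lem-ext-pi}.

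\emph{Part (1).} Since $E$ restricts to $\Kb{{\rm proj}}$, the adjoint pair $(E,F)$ forces $F$ to restrict to $\Db{{\rm mod}}$, and then the pair $(F,G)$ forces $G$ to restrict to $\Db{{\rm mod}}$ and to $\Kb{{\rm inj}}$; in particular $F(A)\in\Kb{\pmodcat{B}}$ and $E(B)\in\Kb{\pmodcat{A}}$. Applying Lemma~\ref{lem-wid}(1) to $F$ with left adjoint $E$ gives $\inf(F)=-\sup(E(B))$, $\sup(F)=\sup(F(A))=w(E(B))-\sup(E(B))$, $w(F)=w(E(B))$ and $w(F(A))\ge w(E(B))$; as $F(A)\in\mathscr{D}^{[\inf(F),\sup(F)]}(B\modcat)$ and the minimal radical projective complex representing $F(A)$ has top term in degree $\sup(F(A))=\sup(F)$, that complex is supported in the degree window $[\sup(F)-w(F(A)),\sup(F)]$. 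Feeding this into the adjunction chain $H^{n}(G(Y))\cong\Hom_{\D{A}}(A,G(Y)[n])\cong\Hom_{\D{B}}(F(A),Y[n])\cong\Hom_{\K{B}}(F(A),Y[n])$ yields $\inf(G)=-\sup(F)$ and $\sup(G)=w(F(A))-\sup(F)$, whence $w(G)=w(F(A))$; all identities in (1) follow --- $w(F)=w(E(B))\le w(F(A))=w(G)<\infty$, $\sup(F)+\sup(G)=w(F(A))=w(G)$, and, using $\sup(F)-\inf(F)=w(F)$, $\inf(F)+\inf(G)=-w(F)=-w(E(B))$. Finally, $G$ fully faithful means the counit $FG\to\Id$ is invertible and $F$ fully faithful means the unit $\Id\to GF$ is invertible; these isomorphisms are what will shuttle modules back and forth.

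\emph{Parts (2) and (3).} Assume $G$ fully faithful and put $u=\inf(F)$, $v=\sup(F)$. For $Y\in B\modcat$ one has $G(Y)\in\mathscr{D}^{[-v,\,w(F(A))-v]}(A\modcat)$ independently of $Y$, so $W_Y:=\Omega_{\mathscr{D}}^{v}(G(Y))$ is an $A$-module (Lemma~\ref{lem-prop-omega}(2)), and brutal truncation of a minimal projective resolution of $G(Y)$ yields a triangle $\cpx{Q}_{Y}\to G(Y)\to W_Y[v]\to\cpx{Q}_{Y}[1]$ with $\cpx{Q}_{Y}$ a bounded complex of projectives supported in a degree interval independent of $Y$. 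Applying $F$ and the isomorphism $Y\cong FG(Y)$ gives a triangle $F(\cpx{Q}_{Y})\to Y\to F(W_Y)[v]\to F(\cpx{Q}_{Y})[1]$; by Part (1) the perfect complex $F(\cpx{Q}_{Y})$ is supported in degrees $\ge 1-w(F(A))$, so applying $\Omega_{\mathscr{D}}^{w(F(A))}$ and Lemma~\ref{lem-omega-shift} annihilates it and leaves $\Omega^{w(F(A))}(Y)\oplus(\text{proj})\cong\Omega_{\mathscr{D}}^{w(F(A))}(F(W_Y)[v])$. Since $\ed(A)=m$ means $W_Y\in A\modcat\subseteq[M]_{m+1}$ for a fixed $M$, Lemma~\ref{lemma-fun-sys}(1) (with the shift $t=v$, which is legitimate because $w(F(A))\ge w(F)$) bounds the right-hand side by $[\Omega_{\mathscr{D}}^{w(F(A))}(F(M)[v])]_{m+1}$; hence $\Omega^{w(F(A))}(Y)\in[M']_{m+1}$ uniformly in $Y$ for a fixed $B$-module $M'$, and Lemma~\ref{lem-ext-pi}(3) desuspends this to $B\modcat\subseteq[M'']_{m+w(F(A))+1}$, i.e.\ $\ed(B)\le\ed(A)+w(F(A))$. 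Part (3) is the mirror argument under $F$ fully faithful, using the cosyzygy calculus (Lemmas~\ref{lem-prop-omega-co},~\ref{lem-omega-coshift}): one reduces $F(X)$ to a $B$-module $W_X$ by a triangle $\cpx{J}_{X}\to F(X)\to W_X[-v]\to$ with $\cpx{J}_{X}$ a bounded complex of injectives, applies $G$ (which restricts to $\Kb{{\rm inj}}$ by Part (1)), kills $G(\cpx{J}_{X})$ with a high cosyzygy, and invokes Lemmas~\ref{lemma-fun-sys}(2) and~\ref{lem-ext-pi}(2), obtaining $\ed(A)\le\ed(B)+w(F(A))$.

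\emph{Parts (4) and (5).} Now $G$ --- hence, with the standing hypotheses, each of $E,F,G$ --- restricts to $\Kb{{\rm proj}}$, and the extra input is that the reduction module inherits syzygy depth. If $Y=\Omega^{l}(Z)$ for a $B$-module $Z$, the brutal-truncation triangle of a projective resolution of $Z$ has the form $\cpx{R}\to Z\to Y[l]\to\cpx{R}[1]$ with $\cpx{R}$ a bounded complex of projectives built from the first $l$ resolution terms; applying $G$ (which now sends bounded projective complexes to bounded projective complexes) and then a sufficiently high syzygy identifies $W_Y$, up to a projective summand and a uniformly bounded syzygy shift, with an $l$-th syzygy of the $A$-module $\Omega_{\mathscr{D}}^{v}(G(Z))$; thus $W_Y\in\Omega^{l}(A\modcat)$ (up to that uniform shift). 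Running the argument of (2) with $Y\in\Omega^{l}(B\modcat)$ for $l$ beyond the stabilization index of the sequence $\ed(\Omega^{n}(A\modcat))$, one may therefore replace the use of $\ed(A)$ by $\ed(\Omega^{l}(A\modcat))=\oed(A)$, getting $\ed(\Omega^{l+w(F(A))}(B\modcat))\le\oed(A)$ and hence $\oed(B)\le\oed(A)$; applying it instead to $Y\in\Omega^{\infty}(B\modcat)$ --- where a single infinite projective coresolution of $Y$ shows $W_Y\in\Omega^{\infty}(A\modcat)$ --- gives $\ed(\Omega^{\infty}(B\modcat))\le\ed(\Omega^{\infty}(A\modcat))$. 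Part (5) is the symmetric statement: $F$ fully faithful gives $X\cong GF(X)$, one reduces $F(X)$ to a $B$-module $W_X$ by syzygies (legitimate now that $G$ restricts to $\Kb{{\rm proj}}$), $W_X\in\Omega^{l}(B\modcat)$ whenever $X\in\Omega^{l}(A\modcat)$, and the same bookkeeping yields $\oed(A)\le\oed(B)$ and $\ed(\Omega^{\infty}(A\modcat))\le\ed(\Omega^{\infty}(B\modcat))$.

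\emph{Main obstacle.} The difficulty is not conceptual but arithmetical: one must use Part (1) to place the minimal projective (resp.\ injective) complex of $F(A)$ (resp.\ the appropriate ``box'' for $G$) in an explicit degree window, so that precisely $w(F(A))$ syzygies (resp.\ cosyzygies) both kill the perfect error term $F(\cpx{Q}_{Y})$ (resp.\ $G(\cpx{J}_{X})$) and stay within the range hypothesis of Lemma~\ref{lemma-fun-sys}; and in (4)--(5) one must align the syzygy that trivializes the truncation complex $\cpx{R}$ with the syzygy defining $W_Y$, and treat $\Omega^{\infty}$ via a limit along an infinite coresolution rather than as the intersection of the $\Omega^{n}$.
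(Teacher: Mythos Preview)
Your argument for parts (1)--(3) matches the paper's proof essentially line for line: the same width identities from Lemma~\ref{lem-wid}, the same reduction of $G(Y)$ (resp.\ $F(X)$) to a module via a brutal truncation triangle, the same transfer back through $F$ (resp.\ $G$) followed by Lemma~\ref{lemma-fun-sys} and Lemma~\ref{lem-ext-pi}. The only cosmetic difference is that the paper names its reduced module $X$ and indexes everything by $\inf(G)$ and $\sup(G)$, while you call it $W_Y$ and index by $\sup(F)$; by part~(1) these are the same numbers.

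For parts (4)--(5) you take a genuinely different route. The paper observes that a suitable shift of $G$ (namely $[\sup(G)-w(G(B))]\circ G$) is a \emph{nonnegative} functor in the sense of Hu--Pan, so its stable functor $\overline{G'}$ sends short exact sequences to short exact sequences up to projectives (Lemma~\ref{lem-non}); Lemma~\ref{lem-non-func} then gives $U_Y\in\Omega^{\alpha}(A\modcat)$ (resp.\ $\Omega^{\infty}$) directly, with no shift. You instead argue by hand: pull back the resolution of $Z$ (where $Y=\Omega^{l}(Z)$) through $G$, and kill the perfect part by a high syzygy. This is correct in spirit and amounts to unpacking what the stable functor does, but two points deserve more care than your sketch gives them. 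First, the value of $p$ needed to annihilate $G(\cpx{R})$ is governed by $w(G(B))$, not by $w(F(A))$, so the module you actually control is $\Omega^{c}(W_Y)$ for the fixed offset $c=w(G(B))-w(G)\ge 0$ rather than $W_Y$ itself; this is harmless for $\oed$ but should be said. Second, for $\Omega^{\infty}$ you only obtain $\Omega^{c}(W_Y)\in\Omega^{\infty}(A\modcat)$ and hence $\Omega^{q}(Y)\in[\text{fixed}]_{m+1}$ for some uniform $q$; to conclude $\ed(\Omega^{\infty}(B\modcat))\le m$ you still need the observation that every $Y\in\Omega^{\infty}(B\modcat)$ is (up to projectives) a $q$-th syzygy of another element of $\Omega^{\infty}(B\modcat)$, obtained from its own infinite coresolution. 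The paper's nonnegative-functor packaging avoids both wrinkles automatically; your direct method buys independence from that machinery at the cost of this extra bookkeeping.
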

\begin{proof}
(1) By Lemma \ref{lem-wid}(1), $w(F)=w(E(B))\le w(F(A))=w(G)$.
It follows from Lemma \ref{lem-wid}(1) that
$\sup(F)= w(E(B))-\sup(E(B)),\;\sup(G)= w(F(A))-\sup(F(A))\text{ and }\sup(F(A))= w(E(B))-\sup(E(B))=\sup(F).$
Thus $\sup(F)+\sup(G)=w(F(A))=w(G)$. Similarly, it follows from Lemma \ref{lem-wid}(1) that $\inf(F)=-\sup(E(B))$, $\inf(G)=-\sup(F(A))$ and $\sup(F(A))= w(E(B))-\sup(E(B))$. Then $\inf(F)+\inf(G)=-w(E(B))$.

%(3)  It follows from a similar argument as in (2).
%
(2) By Lemma \ref{lem-wid}(1), $w(G)=w(F(A))<\infty$. Then for each $Y\in B\modcat$, $G(Y)\in \mathscr{D}^{[\inf(G),\sup(G)]}(A\modcat)$. Thus there is a canonical triangle
\begin{align}\label{tri-pres}
\cpx{P} \lra G(Y)\lra X[-\inf(G)]\lra  \cpx{P}[1]
\end{align}
for some $X\in A\modcat$, where $\cpx{P}\in \mathscr{K}^{[\inf(G)+1,\sup(G)]}(\pmodcat{A})$.
Applying the functor $F$ to the triangle (\ref{tri-pres}), we get a new triangle 
\begin{align}\label{tri-pres-2}
F(\cpx{P} ) \lra FG(Y)\lra F(X[-\inf(G)])\lra  F(\cpx{P} )[1].
\end{align}
By Lemma \ref{lem-wid}(1), $F(A)\in \mathscr{K}^{[-\sup(G),-\inf(G)]}(\pmodcat{A})$. 
Then \begin{align}\label{fg-relation}
F(\cpx{P})\in F(\mathscr{K}^{[\inf(G)+1,\sup(F)]}(\pmodcat{A})) \subseteq 
\mathscr{K}^{[-\sup(G)+\inf(G)+1,-\inf(G)+\sup(G)]}(\pmodcat{A})
\end{align}
Then $\Omega_{\mathscr{D}}^p(F(\cpx{P}))=0$ for $p\ge -\inf(F)-\inf(G)$. 
By Lemma \ref{lem-omega-shift}, there is a projective $B$-module $P$ such that 
\begin{align}\label{fgy-sim-gy}
\Omega_{\mathscr{D}}^p(F(X[-\inf(G)]))\simeq \Omega_{\mathscr{D}}^p(FG(Y))\oplus P
\end{align}
for any $p\ge -\inf(F)-\inf(G)$. 

Suppose $\ed(A)=m$. By Definition \ref{ed-def}, there exists an $A$-module $M$ such that $$A\modcat=[M]_{m+1}.$$
In particular, $X\in [M]_{m+1}$. By Lemma \ref{lemma-fun-sys}(1), $$\Omega_{\mathscr{D}}^{p}(F(X)[-\inf(G)])\in [ \Omega_{\mathscr{D}}^p(F(M)[-\inf(G)])]_{m+1}$$ for any $p\ge -\inf(F)-\inf(G)$.

By (1), $-\inf(F)-\inf(G)=w(F)\le w(G)=:v$. 
Then $$\Omega_{\mathscr{D}}^{v}(F(X)[-\inf(G)])\in [ \Omega_{\mathscr{D}}^v(F(M)[-\inf(G)])]_{m+1}.$$
Since $G$ is fully faithful, $Y\simeq FG(Y)$.
By the isomorphism (\ref{fgy-sim-gy}), $$\Omega_{\mathscr{D}}^v(Y) \simeq \Omega_{\mathscr{D}}^v(FG(Y)) \in [ \Omega_{\mathscr{D}}^v(F(M)[-\inf(G)])]_{m+1}.$$
Due to $Y\in B\modcat$, $\Omega^{v}(Y)=\Omega_{\mathscr{D}}^v(Y) $.
By Lemma \ref{lem-ext-pi}(3), $$Y \in [\Omega^{-v}(\Omega_{\mathscr{D}}^v(F(M)[-\inf(G)]))\oplus\big(\bigoplus_{i=0}^{v-1}\Omega^{-i}(A)\big)]_{m+v+1}.$$
By Definition \ref{ed-def}, $\ed(B)\le m+v=\ed(A)+w(F(A))$.

(3) By Lemma \ref{lem-wid}(1), $w(F)=w(E(B))<\infty$. Then $F(X)\in \mathscr{D}^{[\inf(F),\sup(F)]}(B\modcat)$ for each $X\in A\modcat$. Thus there is a canonical triangle
\begin{align}\label{tri-ires}
Y[-\sup(F)] \lra F(X)\lra \cpx{J}\lra  Y[-\sup(F)] [1]
\end{align}
for some $Y\in B\modcat$, where $\cpx{J}\in \mathscr{K}^{[\inf(F),\sup(F)-1]}(\imodcat{B})$.
Applying the functor $G$ to the triangle (\ref{tri-ires}), we get a new triangle 
\begin{align}\label{tri-ires-2}
G(Y)[-\sup(F)] \lra GF(X)\lra G(\cpx{J})\lra  G(Y)[-\sup(F)] [1].
\end{align}
By Lemma \ref{lem-wid}(2), $G(D(B))\in \mathscr{K}^{[-\sup(F),-\inf(F)]}(\imodcat{A})$. Then \begin{align}\label{gj-relation}
G(\cpx{J})\in G(\mathscr{K}^{[\inf(F),\sup(F)-1]}(\imodcat{B}))\subseteq \mathscr{K}^{[\inf(F)-\sup(F),\sup(F)-\inf(F)-1]}(\imodcat{A}).
\end{align}
Then $\Omega^{\mathscr{D}}_p(G(\cpx{J}))=0$ for any $p\ge \sup(F)-\inf(F)$. 
By Lemma \ref{lem-omega-coshift}, there is an injective $A$-module $I$ such that 
\begin{align}\label{gfx-sim-gy}
\Omega^{\mathscr{D}}_p(GF(X))\oplus I\simeq \Omega^{\mathscr{D}}_p(G(Y)[-\sup(F)])\text{ for all } p\ge \sup(F)-\inf(F).
\end{align}

Notice $w(G)=w(F(A))<\infty$. Then $G(Y)\in \mathscr{D}^{[\inf(G),\sup(G)]}(A\modcat)$ and $$G(Y)[-\sup(F)] \in \mathscr{D}^{[\sup(F)+\inf(G),\sup(F)+\sup(G)]}(A\modcat).$$ 

Suppose $\ed(B)=n$. By Definition \ref{ed-def}, there exists a $B$-module $N$ such that $$B\modcat=[N]_{n+1}.$$
In particular, $Y\in [N]_{n+1}$. By Lemma \ref{lemma-fun-sys}(2), 
$$\Omega^{\mathscr{D}}_p(G(Y)[-\sup(F)] )\in [ \Omega^{\mathscr{D}}_p(G(N)[-\sup(F)])]_{n+1} \text{ for any } p\ge \sup(F)+\sup(F).$$
By (1), $v:=\sup(F)+\sup(G)=w(F(A))\ge w(F)=\sup(F)-\inf(F)$.
Then $$\Omega^{\mathscr{D}}_v(G(Y)[-\sup(F)])\in [\Omega^{\mathscr{D}}_v(G(N)[-\sup(F)])]_{n+1}.$$
Since $F$ is fully faithful, $X\simeq GF(X)$.
By the isomorphism (\ref{gfx-sim-gy}), $$\Omega^{\mathscr{D}}_v(X)\in [\Omega^{\mathscr{D}}_v(G(N)[-\sup(F)])]_{n+1}.$$
Due to $X\in A\modcat$, $\Omega^{-v}(X)=\Omega^{\mathscr{D}}_v(X)$.
By Lemma \ref{lem-ext-pi}(2), $$X \in [\Omega^{v}(\Omega^{\mathscr{D}}_v(G(N)[-\sup(F)])\oplus D(A))\oplus\big(\bigoplus_{i=0}^{v-1}\Omega^{i}(D(A))\big)]_{n+v+1}.$$
By Definition \ref{ed-def}, $\ed(A)\le n+v=\ed(B)+w(F(A))$.

(4) By Lemma \ref{lem-wid}(1), $\mid w(G)\mid \le w(G(B))<\infty$. Then 
$$G(B\modcat)\subseteq \mathscr{D}^{[\inf(G),\sup(G)]}(A\modcat) \text{ and }G(B)\in \mathscr{K}^{[\sup(G)-w(G(B)),\sup(G)]}(\pmodcat{A}).$$
Thus $[\sup(G)-w(G(B))]\circ G:\Db{B\modcat}\ra \Db{A\modcat}$ is nonnegative and 
$$G(B\modcat)[\sup(G)-w(G(B))]\subseteq \mathscr{D}^{[w(G(B))-w(G),w(G(B))]}(A\modcat) \subseteq  \mathscr{D}^{[0,w(G(B))]}(A\modcat).$$ 
Then for each $Y\in B\modcat$, we can fix a triangle  $$\cpx{P}_Y\lra G(Y)[\sup(G)-w(G(B))]\lra  U_Y\lra \cpx{P}_Y[1]$$
in $\Db{A\modcat}$ with $U_Y\in A\modcat$ and $\cpx{P}_Y\in\mathscr{K}^{[1,w(G(B))]}(\pmodcat{A})$.
Furthermore, we have  a triangle  
\begin{align}\label{lem-tri-omega-j}
\cpx{P}_Y[w(G(B))-\sup(G)]\lra G(Y)\lra  U_Y[w(G(B))-\sup(G)]\lra \cpx{P}_Y[w(G(B))-\sup(G)][1]
\end{align}
in $\Db{A\modcat}$ with $\cpx{P}_Y[w(G(B))-\sup(G)]\in\mathscr{K}^{[\sup(G)-w(G(B))+1,\sup(G)]}(\pmodcat{A})$.
Applying the functor $F$ to the triangles (\ref{lem-tri-omega-j}), we have a triangle in $\Db{B\modcat}$:
\begin{align}\label{lem-tri-omega-jj}
F(\cpx{P}_Y[w(G(B))-\sup(G)])\ra FG(Y)\ra  F(U_Y[w(G(B))-\sup(G)])\ra F(\cpx{P}_Y[w(G(B))-\sup(G)])[1]
\end{align}
By Lemma \ref{lem-wid}(1), $F(A)\in \mathscr{K}^{[\sup(F)-w(F(A)),\sup(F)]}(\pmodcat{B})$. Then
\begin{align*}
F(\cpx{P}_Y[w(G(B))-\sup(G)])
\in \mathscr{K}^{[\sup(F)-w(F(A))+\sup(G)-w(G(B))+1,\sup(F)+\sup(G)]}(\pmodcat{B}).
\end{align*}
By (1), $\sup(F)+\sup(G)=w(F(A))$. Then
\begin{align*}
F(\cpx{P}_Y[w(G(B))-\sup(G)])
\in \mathscr{K}^{[-w(G(B))+1,w(F(A))]}(\pmodcat{B}).
\end{align*}
Then $\Omega_{\mathscr{D}}^p(F(\cpx{P}_Y[w(G(B))-\sup(G)]))=0$ for $p\ge w(G(B))$. By Lemma \ref{lem-omega-shift} and the triangle (\ref{lem-tri-omega-jj}), there is a projective $B$-module $Q$ such that 
\begin{align}\label{lem-jj-relation-omega}
\Omega_{\mathscr{D}}^p(FG(Y))\oplus Q\simeq  \Omega_{\mathscr{D}}^p(F(U_Y[w(G(B))-\sup(G)]))\text{ for }p\ge w(G(B)).
\end{align}

Suppose $\oed(A)=m$. By the definition of the $\Omega$-extension dimension, there exist $\alpha \ge 0$ and $M\in A\modcat$ such that  $\Omega^{ \alpha}(A\modcat)\subseteq [M]_{m+1}$.  
Suppose $Y\in \Omega^{ \alpha}(B\modcat)$. By Lemma \ref{lem-non-func}(1), $U_Y\in \Omega^{\alpha}(A\modcat)$. Then $U_Y\in [M]_{m+1}$.
By Lemma \ref{lemma-fun-sys} (1),$\text{ for any } p\ge -\inf(F)+w(G(B))-\sup(G),$
\begin{align*}
 \Omega_{\mathscr{D}}^p(F(U_Y[w(G(B))-\sup(G)]))\in [ \Omega_{\mathscr{D}}^p(F(M[w(G(B))-\sup(G)]))]_{m+1}.
\end{align*}
Note that
\begin{align*}
-\inf(F)+w(G(B))-\sup(G)&=\inf(G)+w(E(B))+w(G(B))-\sup(G)\qquad\text{(by  (1))}\\
&=-w(G)+w(F)+w(G(B))\qquad\text{(by Lemma \ref{lem-wid}(1))}\\
&\le w(G(B))\qquad\text{(by (1))}.
\end{align*}
Then
\begin{align}\label{lem-iv-b-n-jw}
 \Omega_{\mathscr{D}}^p(F(U_Y[w(G(B))-\sup(G)]))\in [ \Omega_{\mathscr{D}}^p(F(M[w(G(B))-\sup(G)]))]_{m+1}
\text{ for any } p\ge w(G(B)).
\end{align}
By the relations (\ref{lem-jj-relation-omega}) and (\ref{lem-iv-b-n-jw}), we get
\begin{align}\label{lem-iv-b-n-fin-jw}
\Omega_{\mathscr{D}}^p(FG(Y))\in [ \Omega_{\mathscr{D}}^p(F(M[w(G(B))-\sup(G)]))]_{m+1}
\text{ for any } p\ge w(G(B)).
\end{align}
Since $G$ is fully faithful, $Y\simeq FG(Y)$ in $\Db{B\modcat}$. 
Then 
\begin{align}\label{lem-iv-b-n-fin-jw-fin}
\Omega_{\mathscr{D}}^p(Y)\in [ \Omega_{\mathscr{D}}^p(F(M[w(G(B))-\sup(G)]))]_{m+1}
\text{ for any } p\ge w(G(B)).
\end{align}
By the definition of the $\Omega$-extension dimension, $\oed(B)\le m=\oed(A)$.
Similarly, by Lemma \ref{lem-non-func}(2), we can obtain $\ed(\Omega^{\infty}(B\modcat))\le \ed(\Omega^{\infty}A\modcat)).$

(5)
By Lemma \ref{lem-wid}(1), $\mid w(F)\mid \le w(F(A))<\infty$. Then 
$$F(A\modcat)\subseteq \mathscr{D}^{[\inf(F),\sup(F)]}(B\modcat) \text{ and }F(A)\in \mathscr{K}^{[\sup(F)-w(F(A)),\sup(F)]}(\pmodcat{B}).$$
Thus $[\sup(F)-w(F(A))]\circ F:\Db{A\modcat}\ra \Db{B\modcat}$ is nonnegative and 
$$F(A\modcat)[\sup(F)-w(F(A))]\subseteq \mathscr{D}^{[w(F(A))-w(F),w(F(A))]}(B\modcat) \subseteq  \mathscr{D}^{[0,w(F(A))]}(B\modcat).$$ 
Then for each $X\in A\modcat$, we can fix a triangle  $$\cpx{Q}_X\lra  F(X)[\sup(F)-w(F(A))]\lra  V_X\lra \cpx{Q}_X[1]$$
in $\Db{B\modcat}$ with $V_X\in B\modcat$ and $\cpx{Q}_X\in\mathscr{K}^{[1,w(F(A))]}(\pmodcat{B})$.
Furthermore, we have  a triangle  
\begin{align}\label{LEM-tri-omega-i}
\cpx{Q}_X[w(F(A))-\sup(F)]\ra  F(X)\ra  V_X[w(F(A))-\sup(F)]\ra \cpx{Q}_X[w(F(A))-\sup(F)][1]
\end{align}
in $\Db{B\modcat}$ with $\cpx{Q}_X[w(F(A))-\sup(F)]\in\mathscr{K}^{[\sup(F)-w(F(A))+1,\sup(F)]}(\pmodcat{B})$.
Applying the functor $G$ to the triangles (\ref{LEM-tri-omega-i}), we have a triangle in $\Db{A\modcat}$:
\begin{align}\label{LEM-tri-omega-ii}
G(\cpx{Q}_X[w(F(A))-\sup(F)])\ra  GF(X)\ra G( V_X[w(F(A))-\sup(F)])\ra G( \cpx{Q}_X[w(F(A))-\sup(F)])[1].
\end{align}
By Lemma \ref{lem-wid}(1), $G(B)\in \mathscr{K}^{[\sup(G)-w(G(B)),\sup(G)]}(\pmodcat{A})$. Then
\begin{align*}
G(\cpx{Q}_X[w(F(A))-\sup(F)])\in  \mathscr{K}^{[\sup(G)-w(G(B))+\sup(F)-w(F(A))+1,\sup(F)+\sup(G)]}(\pmodcat{A}).
\end{align*}
By (1), $\sup(F)+\sup(G)=w(F(A))$. Then
$$G(\cpx{Q}_X[w(F(A))-\sup(F)])\in  \mathscr{K}^{[-w(G(B))+1,w(F(A))]}(\pmodcat{A}).$$
Then $\Omega_{\mathscr{D}}^p(G(\cpx{Q}_X[w(F(A))-\sup(F)]))=0$ for $p\ge  w(G(B))$. By Lemma \ref{lem-omega-shift} and the triangle (\ref{LEM-tri-omega-ii}), there is a projective $A$-module $P$ such that 
\begin{align}\label{LEM-ii-relation-omega}
\Omega_{\mathscr{D}}^p(GF(X))\oplus P\simeq  \Omega_{\mathscr{D}}^p(G( V_X[w(F(A))-\sup(F)]))\text{ for any }p\ge w(G(B)).
\end{align}

Suppose $\oed(B)=n$. By the definition of the $\Omega$-extension dimension, there exist $\beta\ge 0$ and $N\in B\modcat$ such that  $\Omega^{ \beta}(B\modcat)\subseteq [N]_{n+1}$.  Suppose $X\in \Omega^{\beta}(A\modcat)$. By Lemma \ref{lem-non-func}(1), $V_X\in \Omega^{\beta}(B\modcat)$. Then $V_X\in [N]_{n+1}$.
By Lemma \ref{lemma-fun-sys} (1),$\text{ for any } p\ge -\inf(G)+w(F(A))-\sup(F),$
$$\Omega_{\mathscr{D}}^{p}(G( V_X[w(F(A))-\sup(F)]))\in [ \Omega_{\mathscr{D}}^p(G( N[w(F(A))-\sup(F)]))]_{n+1}.$$
By Lemma \ref{lem-wid}(1), $\inf(G)=-\sup(F(A))$ and $\sup(F(A))=\sup(F)$. Then $-\inf(G)+w(F(A))-\sup(F)=w(F(A))$.
Thus \begin{align}\label{LEM-iv-b-n}
\Omega_{\mathscr{D}}^{p}(G( V_X[w(F(A))-\sup(F)]))\in [ \Omega_{\mathscr{D}}^p(G( N[w(F(A))-\sup(F)]))]_{n+1}\text{ for any } p\ge w(F(A)).
\end{align}
By Lemma \ref{lem-wid}(1), $w(G(B))\ge w(F(A))$.
By the relations (\ref{LEM-ii-relation-omega}) and (\ref{LEM-iv-b-n}), we get
\begin{align}\label{LEM-iv-b-n-fin}
\Omega_{\mathscr{D}}^p(GF(X)) \in [ \Omega_{\mathscr{D}}^p(G( N[w(F(A))-\sup(F)]))]_{n+1} \text{ for any } p\ge w(G(B)).
\end{align}
Since $F$ is fully faithful, $X\simeq GF(X)$ in $\Db{A\modcat}$. 
Then 
\begin{align}\label{LEM-iv-b-n-fin-FIN}
\Omega_{\mathscr{D}}^p(X) \in [ \Omega_{\mathscr{D}}^p(G( N[w(F(A))-\sup(F)]))]_{n+1} \text{ for any } p\ge w(G(B)).
\end{align}
By the definition of the $\Omega$-extension dimension, $\oed(A)\le n=\oed(B)$.
Similarly, by Lemma \ref{lem-non-func}(2), we can obtain $\ed(\Omega^{\infty}(A\modcat))\le \ed(\Omega^{\infty}B\modcat)).$
\end{proof}
An immediate consequence is the following. 
\begin{Koro}\label{cor-der}
Suppose that $F:\D{A}\ra \D{B}$ is a triangle equivalence. Then
$$\mid\ed(A)-\ed(B)\mid\le w(F(A)),$$
$$\oed(B)=\oed(A)\text{ and }\ed(\Omega^{\infty}(B\modcat))= \ed(\Omega^{\infty}A\modcat)).$$
\end{Koro}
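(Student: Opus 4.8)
The plan is to obtain Corollary~\ref{cor-der} as a direct application of Lemma~\ref{lem-adjoint} to the pair $(F,F^{-1})$. Since $F$ is a triangle equivalence, a quasi-inverse $F^{-1}\colon\D{B\Modcat}\to\D{A\Modcat}$ is simultaneously a left adjoint and a right adjoint of $F$; so, setting $E:=F^{-1}$ and $G:=F^{-1}$, the diagram of Lemma~\ref{lem-adjoint} is in force with $(E,F)$ and $(F,G)$ adjoint pairs, and both $F$ and $G$ fully faithful.

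The only hypothesis of Lemma~\ref{lem-adjoint} that requires an argument is that $E$ and $F$ (and, for the last two assertions, also $G$) restrict to $\Kb{{\rm proj}}$. This is the standard fact that a triangle equivalence between unbounded derived categories of module categories preserves compact objects, combined with the identification of the compact objects of $\D{A\Modcat}$ with the perfect complexes $\Kb{\pmodcat{A}}$ (and similarly for $B$); hence $F(\Kb{\pmodcat{A}})\subseteq\Kb{\pmodcat{B}}$ and $F^{-1}(\Kb{\pmodcat{B}})\subseteq\Kb{\pmodcat{A}}$. In particular $F(A)\in\Kb{\pmodcat{B}}$, so that $w(F(A))$ is finite and all quantities in the statement are well defined. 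I expect this verification to be the only non-formal point; everything else is bookkeeping with the parts of Lemma~\ref{lem-adjoint}.

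With these hypotheses in hand, the conclusions follow by assembling the relevant parts of Lemma~\ref{lem-adjoint}. Applying part (2) with $G=F^{-1}$ fully faithful gives $\ed(B)\le\ed(A)+w(F(A))$, and applying part (3) with $F$ fully faithful gives $\ed(A)\le\ed(B)+w(F(A))$; together these yield $\mid\ed(A)-\ed(B)\mid\le w(F(A))$. Since $G=F^{-1}$ also restricts to $\Kb{{\rm proj}}$, part (4) (with $G$ fully faithful) gives $\oed(B)\le\oed(A)$ and $\ed(\Omega^{\infty}(B\modcat))\le\ed(\Omega^{\infty}(A\modcat))$, while part (5) (with $F$ fully faithful) gives the reverse inequalities $\oed(A)\le\oed(B)$ and $\ed(\Omega^{\infty}(A\modcat))\le\ed(\Omega^{\infty}(B\modcat))$. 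Combining the two directions produces $\oed(A)=\oed(B)$ and $\ed(\Omega^{\infty}(A\modcat))=\ed(\Omega^{\infty}(B\modcat))$, which completes the proof.
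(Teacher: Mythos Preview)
Your proposal is correct and follows essentially the same approach as the paper: the paper also takes a quasi-inverse $G$ of $F$, observes that $(G,F)$ and $(F,G)$ are adjoint pairs with both functors fully faithful and restricting to $\Kb{{\rm proj}}$, and then invokes Lemma~\ref{lem-adjoint}. Your write-up is slightly more explicit about which parts of Lemma~\ref{lem-adjoint} yield which inequality, but the argument is the same.
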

\begin{proof}
Suppose that $G:\D{B}\to\D{A}$ is a quasi-inverse of $F$. Then $F$ and $G$ restrict to $\Kb{{\rm proj}}$, and $G$ is also a triangle equivalence. In particular, $F$ and $G$ are fully faithful.
Note that $(G,F)$ and $(F,G)$ are adjoint pairs. Then we have triangle functors
\begin{align*}
\xymatrixcolsep{4pc}\xymatrix{
\D{B\Modcat} \ar@<-2ex>[r]|{G}\ar@<2ex>[r]|{G}
&\D{A\Modcat} \ar[l]|{F}
}
\end{align*}
Thus the result follows from Lemma \ref{lem-adjoint}.
\end{proof}

\begin{Theo}\label{thm-ext-rec}
Let $A$, $B$ and $C$ be three Artin algebras.
Suppose that there is a recollement among the derived categories
$\D{A\Modcat}$, $\D{B\Modcat}$ and $\D{C\Modcat}$$:$
\begin{align}\label{thm-ext-rec-f}
\xymatrixcolsep{4pc}\xymatrix{
\D{B\Modcat} \ar[r]|{i_*=i_!} &\D{A\Modcat} \ar@<-2ex>[l]|{i^*} \ar@<2ex>[l]|{i^!} \ar[r]|{j^!=j^*}  &\D{C\Modcat}. \ar@<-2ex>[l]|{j_!} \ar@<2ex>[l]|{j_{*}}
}
\end{align}

{\rm (1)} Suppose that the recollement $(\ref{thm-ext-rec-f})$ extends one step downwards. Then 

{\rm (i)} $\ed(B)\le \ed(A)+w(i_{*}(B))$.

{\rm (ii)} $\ed(C)\le \ed(A)+w(j^{*}(A))$.

{\rm (iii)} $ \ed(A)\le 2\ed(B)+\ed(C)+\max\{w(i_*(B),w(j^*(A))\}+2.$

{\rm (2)} Suppose that the recollement $(\ref{thm-ext-rec-f})$ extends one step downwards and extends one step upwards. Then 

{\rm (i)} $\ed(A)\le \ed(B)+\ed(C)+\max\{w(i_*(B),w(j^*(A))\}+1.$

{\rm (ii)} $\max\{ \oed(B),\oed(C)\}\le \oed(A)\le \oed(B)+\oed(C)+1.$

{\rm (iii)} 
%$\max\{ \ed(\Omega^{\infty}(B\modcat)),\ed(\Omega^{\infty}(C\modcat))\}\le \ed(\Omega^{\infty}(A\modcat))\le \ed(\Omega^{\infty}(B\modcat))+\ed(\Omega^{\infty}(C\modcat))+1.$
$\max\{ \ed(\Omega^{\infty}(B\modcat)),\ed(\Omega^{\infty}(C\modcat))\}\le \ed(\Omega^{\infty}(A\modcat)),\text{ and}$
$$ \ed(\Omega^{\infty}(A\modcat))\le \ed(\Omega^{\infty}(B\modcat))+\ed(\Omega^{\infty}(C\modcat))+1.$$

{\rm (iv)} If, in addition, $\gd(C)<\infty$, then $$\oed(B)=\oed(A)\text{ and } \ed(\Omega^{\infty}(B\modcat))=\ed(\Omega^{\infty}(A\modcat)).$$

{\rm (v)} If, in addition, $\gd(B)<\infty$, then $$\oed(C)=\oed(A)\text{ and } \ed(\Omega^{\infty}(C\modcat))=\ed(\Omega^{\infty}(A\modcat)).$$
\end{Theo}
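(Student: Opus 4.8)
The plan is to run everything through two engines: Lemma~\ref{lem-adjoint}, which converts an adjoint triple of triangle functors restricting to $\Kb{{\rm proj}}$ into comparisons of extension dimensions, and the counit/unit triangles of axiom (R4), which present an $A$-module as a two-sided extension assembled from data over $B$ and data over $C$; the restriction hypotheses needed to invoke these are read off from Lemma~\ref{lem-rec-prop}. Throughout part (1) I would use that, since the recollement extends one step downwards, $i^{*},i_{*},j_{!},j^{*}$ all restrict to $\Kb{{\rm proj}}$, $i^{*},i_{*},j^{!},j_{*}$ all restrict to $\Db{{\rm mod}}$, and the ladder has height $2$, so that there are adjunction chains $i^{*}\dashv i_{*}\dashv i^{!}\dashv i_{?}$ and $j_{!}\dashv j^{*}\dashv j_{*}\dashv j^{\#}$.

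For (1)(ii) I would simply apply Lemma~\ref{lem-adjoint}(2) to the triple $(j_{!},j^{*},j_{*})$ with middle term $j^{*}$: the outer terms restrict to $\Kb{{\rm proj}}$ and $j_{*}$ is fully faithful, so the lemma yields $\ed(C)\le\ed(A)+w(j^{*}(A))$ at once. For (1)(i) I would argue in the same spirit through $i_{*}\colon\D{B\Modcat}\to\D{A\Modcat}$ followed by its right adjoint $i^{!}$ (which restricts to $\Db{{\rm mod}}$ and satisfies $i^{!}i_{*}\simeq\mathrm{Id}$), mimicking the proof of Lemma~\ref{lem-adjoint}(2): given $Y\in B\modcat$, the object $i_{*}(Y)$ lies in $\Db{A\modcat}$ with homological width controlled by $w(i_{*}(B))$ via Lemma~\ref{lem-wid}, and I would filter it using $\ed(A)$ and the canonical truncation triangle, apply $i^{!}$, and convert the resulting bounded complex back to a module with Lemma~\ref{lem-ext-pi}, landing at $\ed(B)\le\ed(A)+w(i_{*}(B))$. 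For (1)(iii) I would take $X\in A\modcat$ and use the (R4) triangle $j_{!}j^{*}(X)\to X\to i_{*}i^{*}(X)\to j_{!}j^{*}(X)[1]$: the complex $j^{*}(X)$ sits in $\Db{C\modcat}$ with cohomology confined to an interval of length at most $w(j^{*}(A))$ and with cohomologies in $[N]_{\ed(C)+1}$ for a fixed $N$, so reassembling through the truncation triangles (Lemma~\ref{lem-wrd}, Lemma~\ref{lem-ex-plus}) and applying $j_{!}$ controls $j_{!}j^{*}(X)$, and symmetrically $i^{*}(X)$ controls $i_{*}i^{*}(X)$; splicing the two ends along the triangle (one further $\bullet$-step) and turning the bounded complexes that appear into honest modules via Lemma~\ref{lem-ext-pi} (each conversion costing at most the relevant homological width in syzygy/cosyzygy shifts) should give $\ed(A)\le 2\ed(B)+\ed(C)+\max\{w(i_{*}(B)),w(j^{*}(A))\}+2$. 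Part (2)(i) is the same computation once the recollement also extends one step upwards: then $i^{*},j_{!}$ restrict to $\Db{{\rm mod}}$ and $i^{*},j^{*}$ restrict to $\Kb{{\rm inj}}$ (Lemma~\ref{lem-rec-prop}(2)), so the bounded complexes in the splicing may be chosen to be shifts of modules, eliminating one conversion layer and one $\bullet$-step and sharpening the bound to $\ed(B)+\ed(C)+\max\{w(i_{*}(B)),w(j^{*}(A))\}+1$.

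For (2)(ii)--(v) I would proceed as follows. The lower bounds $\oed(B)\le\oed(A)$ and $\oed(C)\le\oed(A)$, together with the analogous inequalities for $\ed(\Omega^{\infty}(-))$, come from Lemma~\ref{lem-adjoint}(4) applied to the triple $(i_{\sharp},i^{*},i_{*})$ supplied by the upward extension and to the triple $(j_{!},j^{*},j_{*})$; all hypotheses, including that the fully faithful right adjoint restricts to $\Kb{{\rm proj}}$, are available because the recollement extends both ways. For the upper bound in (ii)--(iii) I would fix $\alpha\ge 0$ and modules $M,N$ with $\Omega^{\alpha}(B\modcat)\subseteq[M]_{\oed(B)+1}$ and $\Omega^{\alpha}(C\modcat)\subseteq[N]_{\oed(C)+1}$, observe that suitable shifts of $i^{*}$ and $j^{*}$ are nonnegative functors, and feed a sufficiently high syzygy $X$ over $A$ into the gluing triangle: by Lemma~\ref{lem-non-func}(1) the associated stable objects $\overline{i^{*}}(X)$ and $\overline{j^{*}}(X)$ are again high syzygies over $B$ and $C$, hence lie in $[M]_{\oed(B)+1}$ and $[N]_{\oed(C)+1}$, and splicing plus Lemma~\ref{lem-ext-pi} yields $\oed(A)\le\oed(B)+\oed(C)+1$; the verbatim argument with $\Omega^{\infty}$ and Lemma~\ref{lem-non-func}(2) gives (iii). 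Finally, in (iv) the hypothesis $\gd(C)<\infty$ forces $j^{*}(X)\in\Kb{\pmodcat{C}}$ for all $X$, hence $j_{!}j^{*}(X)\in\Kb{\pmodcat{A}}$, so the (R4) triangle shows that $X$ and $i_{*}i^{*}(X)$ carry the same $\Omega$-extension data up to perfect complexes and boundedly many syzygies; combined with $\oed(B)\le\oed(A)$ from (ii) this reverses to $\oed(A)=\oed(B)$, and identically $\ed(\Omega^{\infty}(A\modcat))=\ed(\Omega^{\infty}(B\modcat))$, with (v) being symmetric via $\gd(B)<\infty$, which makes $i_{*}(X)$ perfect.

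The hard part will be (1)(iii) (equivalently (2)(i)): keeping the bookkeeping of syzygy and cosyzygy shifts tight as a bounded complex is pushed through the gluing triangle and converted back to modules, so that precisely the homological widths $w(i_{*}(B))$ and $w(j^{*}(A))$ appear rather than a cruder quantity, and so that the additive constants come out exactly as stated. The remaining parts I expect to be essentially mechanical applications of Lemmas~\ref{lem-adjoint}, \ref{lem-wid}, \ref{lem-non-func} and \ref{lem-ext-pi}.
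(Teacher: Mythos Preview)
Your plan for (1)(i), (1)(ii), (2)(ii)--(v) is essentially the paper's, modulo the following small mismatches. For (1)(i) you need not mimic the proof of Lemma~\ref{lem-adjoint}(2): the triple $(i^{*},i_{*},i^{!})$ with $i_{*}$ fully faithful is exactly the setup of Lemma~\ref{lem-adjoint}(3) after swapping the roles of $A$ and $B$, so (1)(i) is a direct citation. For the lower bound $\oed(C)\le\oed(A)$ in (2)(ii) your triple $(j_{!},j^{*},j_{*})$ does \emph{not} satisfy the hypotheses of Lemma~\ref{lem-adjoint}(4): $j_{*}$ restricts to $\Kb{{\rm proj}}$ only if the recollement extends \emph{two} steps downwards. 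The paper instead uses Lemma~\ref{lem-adjoint}(5) on the triple $(j^{\flat},j_{!},j^{*})$ coming from the upward extension (with $j_{!}$ fully faithful), where all three functors do restrict to $\Kb{{\rm proj}}$.

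The genuine gap is in (1)(iii). Under only a downward extension, $i^{*}$ does \emph{not} restrict to $\Db{{\rm mod}}$ (this is exactly Lemma~\ref{lem-rec-prop}(2)(vi)), so for $X\in A\modcat$ the complex $i^{*}(X)$ can have unbounded cohomology and your ``symmetric'' treatment of the triangle $j_{!}j^{*}(X)\to X\to i_{*}i^{*}(X)$ collapses: there is no interval to truncate over, and consequently $j_{!}j^{*}(X)$ need not lie in $\Db{A\modcat}$ either. The paper works instead with the \emph{other} (R4) triangle
\[
i_{!}i^{!}(X)\longrightarrow X\longrightarrow j_{*}j^{*}(X)\longrightarrow i_{!}i^{!}(X)[1],
\]
where both $i^{!}$ and $j_{*}$ restrict to $\Db{{\rm mod}}$ precisely because of the downward extension. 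The $j_{*}j^{*}$-side is then handled exactly as in Lemma~\ref{lem-adjoint}(3), contributing $\ed(C)+1$. The asymmetry you did not explain --- why $2\ed(B)$ rather than $\ed(B)$ --- comes from the $i_{!}i^{!}$-side: since in (1) the functor $i_{!}=i_{*}$ need not restrict to $\Kb{{\rm inj}}$, the injective-truncation trick used in (2)(i) is unavailable, and the paper resorts to a Han-type splitting $\cpx{K}\to i^{!}(X)\to\cpx{Z}$ with $\cpx{K}$ and $\cpx{Z}$ direct sums of shifted $B$-modules; each of the two pieces costs $\ed(B)+1$, whence $2\ed(B)+2$, and splicing with the $j$-side and converting via Lemma~\ref{lem-ext-pi} gives the stated bound. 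Under the additional upward extension in (2)(i), $i_{*}$ restricts to $\Kb{{\rm inj}}$, the injective truncation replaces the Han splitting, and the factor $2$ disappears --- this is the mechanism, not the one you sketched.
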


\begin{proof}
(1) Applying Lemma \ref{lem-adjoint}(2) and (3), we obtain (i) and (ii), respectively.

(iii) The given recollement (\ref{thm-ext-rec-f}) yields the following canonical triangle
\begin{align}\label{can-tri}
i_{!}i^!(X)\lraf{\eta_X} 
X\lraf{\epsilon_X}j_{*}j^{*}(X)
\lra i_{!}i^!(X)[1]
\end{align}
in $\D{A}$, where $\eta_X$ and $\epsilon_X$ stand for the counit and unit morphisms, respectively.

By Lemma \ref{lem-wid}(1), $w(i^!)=w(i_!(B))<\infty$. 
Denote $s:=\inf(i^!)$, $t:=\sup(i^!)$. Then $i^!(X)\in \mathscr{D}^{[s,t]}(B\modcat)$ and $i^!(X)$ can be written as
\begin{align*}
0\lra Y^s\lra Y^{s+1}\lra \cdots \lra Y^{t-1}\lra Y^t\lra 0.
\end{align*}
From the proof of \cite[Theorem]{han09}, we see that there is a triangle in $\Db{B\modcat}$:
\begin{align}\label{tri-kxz}
\cpx{K}\lra i^!(X)\lra \cpx{Z}\lra \cpx{K}[1],
\end{align}
where $\cpx{K}=\bigoplus_{q=s}^{t}(K^q[-q])$ and $\cpx{Z}=\bigoplus_{q=s}^{t}(Z^q[-q])$ with $K^q, Z^q\in  B\modcat$.
Applying the functor $i^!$ to the triangle, we get a new triangle
\begin{align}\label{tri-kxz-i}
\bigoplus_{q=s}^{t}(i_!(K^q)[-q])\lra i_!i^!(X)\lra \bigoplus_{q=s}^{t}(i_!(Z^q)[-q])\lra i_!( \cpx{K})[1].
\end{align}

Let $n:=\ed(B)$ and $N\in B\modcat$ with $B\modcat=[N]_{n+1}$. An analogous argument to the proof of Lemma \ref{lem-adjoint}(2) shows that for each $s\le q\le t$,
$$\Omega^{\mathscr{D}}_{\alpha}(i_!(K^q)[-q])\in [\Omega^{\mathscr{D}}_{\alpha}(i_!(N)[-q])]_{n+1},  \text{ for all }{\alpha}\ge \sup(i_!)+q.$$

Denote $v:=\sup(i_!)+t=\sup(i_!)+\sup(i^!)$. By Lemma \ref{lem-adjoint}(1), $v=w(i_!(A))$. Then
\begin{align}\label{sub-k}
 \Omega^{\mathscr{D}}_{\alpha}(\bigoplus_{q=s}^{t}(i_!(K^q)[-q]))\in [\Omega^{\mathscr{D}}_{\alpha}(\bigoplus_{q=s}^{t}(i_!(N)[-q]))]_{n+1},  \text{ for all }{\alpha}\ge v.
\end{align}
Similarly, we have 
\begin{align}\label{sub-z}
\Omega^{\mathscr{D}}_{\alpha}(\bigoplus_{q=s}^{t}(i_!(Z^q)[-q]))\in [\Omega^{\mathscr{D}}_{\alpha}(\bigoplus_{q=s}^{t}(i_!(N)[-q]))]_{n+1},  \text{ for all }{\alpha}\ge v.
\end{align}
For each $\alpha\le v$, 
applying $\Omega^{\mathscr{D}}_{\alpha}$ to the triangles (\ref{tri-kxz-i}), we have triangles in $\Db{A\modcat}$:
\begin{align}\label{tri-kxz-i-om}
\Omega^{\mathscr{D}}_{\alpha}(\bigoplus_{q=s}^{t}(i_!(K^q)[-q]))\lra
\Omega^{\mathscr{D}}_{\alpha}(i_!i^!(X))\oplus I_1
\lra \Omega^{\mathscr{D}}_{\alpha}(\bigoplus_{q=s}^{t}(i_!(Z^q)[-q]))\lra \Omega^{\mathscr{D}}_{\alpha}(i_!( \cpx{K}))[1]
\end{align}
for some injective $A$-module $I_1$ by Lemma \ref{lem-omega-shift}.
Note that all term in the triangles (\ref{tri-kxz-i-om}) are $A$-modules. Applying the cohomological functors $H^i:=\Hom_{\Db{A\modcat}}(A,-[i])$, $i=-1,0,1$ 
to the triangles (\ref{tri-kxz-i-om}), we have short exact sequences
\begin{align}\label{tri-kxz-i-om-ex}
0\lra \Omega^{\mathscr{D}}_{\alpha}(\bigoplus_{q=s}^{t}(i_!(K^q)[-q]))\lra
\Omega^{\mathscr{D}}_{\alpha}(i_!i^!(X))\oplus I_1
\lra \Omega^{\mathscr{D}}_{\alpha}(\bigoplus_{q=s}^{t}(i_!(Z^q)[-q]))\lra 0.
\end{align}
By Lemma \ref{lem-ex-plus}, the relations (\ref{sub-k}) and (\ref{sub-z}), we have
\begin{align}\label{sub-kz-x}
\Omega^{\mathscr{D}}_{\alpha}(i_!i^!(X))\in [\Omega^{\mathscr{D}}_{\alpha}(\bigoplus_{q=s}^{t}(i_!(N)[-q]))]_{2n+2},  \text{ for all }{\alpha}\ge v.
\end{align}

Let $l:=\ed(C)$ and $L\in C\modcat$ with $C\modcat=[L]_{l+1}$.  Denote $u:=w(j^*(A))$. An analogous argument to the proof of Lemma \ref{lem-adjoint}(3) shows 
\begin{align}\label{omega-u-x}
\Omega^{\mathscr{D}}_q(j_*j^*(X))\in [\Omega^{\mathscr{D}}_q(j_*(L)[-\sup(j^*)])]_{l+1} \text{ for } q\ge u.
\end{align}

Denote $p:=\max\{u,v\}$. 
Applying $\Omega^{\mathscr{D}}_p$ to the triangles (\ref{can-tri}), we have triangles in $\Db{A\modcat}$:
\begin{equation}\label{can-tri-i-bb}
\Omega^{\mathscr{D}}_p(i_{!}i^!(X))\lra
\Omega^{\mathscr{D}}_p(X)\oplus I\lra
\Omega^{\mathscr{D}}_p(j_{*}j^{*}(X))
\lra \Omega^{\mathscr{D}}_p(i_{!}i^!(X))[1]
\end{equation}
for some injective $A$-module $I$ by Lemma \ref{lem-omega-shift}.
Note that all term in the triangles (\ref{can-tri-i-bb}) are $A$-modules. Applying the cohomological functors $H^i:=\Hom_{\Db{A\modcat}}(A,-[i])$, $i=-1,0,1$ 
to the triangles (\ref{can-tri-i-bb}), we have short exact sequences
\begin{equation}\label{can-tri-i-ex}
0\lra
\Omega^{\mathscr{D}}_p(i_{!}i^!(X))\lra
\Omega^{\mathscr{D}}_p(X)\oplus I\lra
\Omega^{\mathscr{D}}_p(j_{*}j^{*}(X))
\lra 0
\end{equation}
By Lemma \ref{lem-ex-plus}, the relations (\ref{sub-kz-x}) and (\ref{omega-u-x}), we have
\begin{align}\label{omega-vu-x-2}
\Omega^{\mathscr{D}}_p(X)\in 
[\Omega^{\mathscr{D}}_{p}(\bigoplus_{q=s}^{t}(i_!(N)[-q]))\oplus \Omega^{\mathscr{D}}_p(j_*(L)[-\sup(j^*)])]_{2n+l+3}.
\end{align}
Due to $X\in A\modcat$, $\Omega^{-p}(X)=\Omega^{\mathscr{D}}_p(X)$.
By Lemma \ref{lem-ext-pi}(2), 
$$X \in 
[\Omega^{p}(\Omega^{\mathscr{D}}_{p}(\bigoplus_{q=s}^{t}(i_!(N)[-q]))\oplus \Omega^{\mathscr{D}}_p(j_*(L)[-\sup(j^*)]))
\oplus\big(\bigoplus_{i=0}^{p-1}\Omega^{i}(D(A))\big)]_{2n+l+p+3}.$$
By Definition \ref{ed-def}, $$\ed(A)\le 2n+l+p+2=2\ed(B)+\ed(C)+\max\{w(i_*(B),w(j^*(A))\}+2.$$

(2)(i) By Lemma \ref{lem-wid}(1), $w(i^!)=w(i_!(B))<\infty$. Then $i^!(X)\in \mathscr{D}^{[\inf(i^!),\sup(i^!)]}(B\modcat)$. Thus there is a canonical triangle
\begin{align}\label{tri-ires-i}
Y[-\sup(i^!)]\cpx{J}\lra i^!(X)\lra \cpx{J}\lra  Y[-\sup(i^!)][1]
\end{align}
for some $Y\in B\modcat$, where $\cpx{J}\in \mathscr{K}^{[\inf(i^!),\sup(i^!)-1]}(\imodcat{B})$.
Applying the functor $i_!$ to the triangle (\ref{tri-ires-i}), we get a new triangle 
\begin{align}\label{tri-ires-2-i}
i_!(Y)[-\sup(i^!)]\lra i_!i^!(X)\lra i_!(\cpx{J})\lra  i_!(Y)[-\sup(i^!)][1].
\end{align}
By Lemma \ref{lem-wid}(2), $i_!(D(B))\in \mathscr{K}^{[-\sup(i^*),-\inf(i^*)]}(\imodcat{A})$. 
Then \begin{align}\label{gj-relation-i}
i_!(\cpx{J})\in i_!(\mathscr{K}^{[\inf(i^!),\sup(i^!)-1]}(\imodcat{B}))\subseteq 
\mathscr{K}^{[\inf(i^!)-\sup(i^*),\sup(i^!)-\inf(i^*)-1]}(\imodcat{B}).
\end{align}
Notice $w(i_!)=w(i^*(A))<\infty$. Then $i_!(Y)\in \mathscr{D}^{[\inf(i_!),\sup(i_!)]}(A\modcat)$ and $$i_!(Y)[-\sup(i^!)] \in \mathscr{D}^{[\inf(i_!)+\sup(i^!),\sup(i_!)+\sup(i^!)]}(A\modcat).$$ 
Since $i^*$ has a left adjoint, it follows from Lemma \ref{lem-wid}(1), $$\sup(i^*(A))=\sup(i^*)\text{ and }w(i^*)=\sup(i^*)-\inf(i^*)\le w(i^*(A)).$$ 
Since $(i^*,i_!)$ is an adjoint pair, $\inf(i_!)=-\sup(i^*(A))$ and $w(i_!)=w(i^*(A))$ by Lemma \ref{lem-wid}(1). Then $$w(i_!)=\sup(i_!)-\inf(i_!)=\sup(i_!)+\sup(i^*(A))=\sup(i_!)+\sup(i^*)=w(i^*(A)).$$
Thus $-\inf(i^*)\le \sup(i_!)$ and $\sup(i^!)-\inf(i^*)\le \sup(i^!)+\sup(i_!)$.

Denote $v:=\sup(i^!)+\sup(i_!)$.
By Lemma \ref{lem-adjoint}(1), $v=w(i_!(A))\ge \sup(i^!)-\inf(i^*)$.
It follows from the relation (\ref{gj-relation-i}) that $\Omega^{\mathscr{D}}_v(i_!(\cpx{J}))=0$. 
By Lemma \ref{lem-omega-coshift}, there is an injective $A$-module $I$ such that 
\begin{align}\label{gfx-sim-gy-i}
\Omega^{\mathscr{D}}_v(i_!i^!(X))\oplus I\simeq \Omega^{\mathscr{D}}_v(i_!(Y)[-\sup(i^!)]).
\end{align}

Let $n:=\ed(B)$ and $N\in B\modcat$ with $B\modcat=[N]_{n+1}$.  An analogous argument to the proof of Lemma \ref{lem-adjoint}(2) shows that
\begin{align}\label{omega-v-x}
\Omega^{\mathscr{D}}_q(i_!i^!(X))\in [\Omega^{\mathscr{D}}_q(i_!(N)[-\sup(i^!)])]_{n+1} \text{ for } q\ge v.
\end{align}

Denote $p:=\max\{u,v\}$. An analogous argument to the proof of Lemma \ref{lem-adjoint}(3) shows that

By Lemma \ref{lem-ex-plus}, the relations (\ref{omega-v-x}) and (\ref{omega-u-x}), we have
\begin{align}\label{omega-vu-x}
\Omega^{\mathscr{D}}_p(X)\in [\Omega^{\mathscr{D}}_p(i_!(N)[-\sup(i^!)])\oplus \Omega^{\mathscr{D}}_p(j_*(L)[-\sup(j^*)])]_{n+l+2}.
\end{align}
Due to $X\in A\modcat$, $\Omega^{-p}(X)=\Omega^{\mathscr{D}}_p(X)$.
By Lemma \ref{lem-ext-pi}(2), $$X \in [\Omega^{p}(\Omega^{\mathscr{D}}_p(i_!(N)[-\sup(i^!)])\oplus \Omega^{\mathscr{D}}_p(j_*(L)[-\sup(j^*)]))\oplus D(A))\oplus\big(\bigoplus_{i=0}^{p-1}\Omega^{i}(D(A))\big)]_{n+l+p+2}.$$
By Definition \ref{ed-def}, $$\ed(A)\le n+l+p+1=\ed(B)+\ed(C)+\max\{w(i_*(B),w(j^*(A))\}+1.$$

(ii) Applying Lemma \ref{lem-adjoint}(4) and (5), we can obtain $$\max\{ \oed(B),\oed(C)\}\le \oed(A).$$

Now, we prove $ \oed(A)\le \oed(B)+\oed(C)+1.$ Indeed, the given recollement (\ref{thm-ext-rec-f}) yields the following canonical triangle
\begin{align}\label{can-tri-left}
j_{!}j^{!}(X)\lra
X\lra i_{*}i^*(X)
\lra j_{!}j^{!}(X)[1]
\end{align}
in $\D{A}$.

On the one hand,
Suppose $\oed(B)=n$. By the definition of the $\Omega$-extension dimension, there exist $\beta\ge 0$ and $N\in B\modcat$ such that  $\Omega^{ \beta}(B\modcat)\subseteq [N]_{n+1}$.  Suppose $X\in \Omega^{\beta}(A\modcat)$. An analogous argument to the proof of Lemma \ref{lem-adjoint}(5) shows that
\begin{align}\label{iv-b-n-fin}
\Omega_{\mathscr{D}}^p(i_*i^*(X))\in [ \Omega_{\mathscr{D}}^p(i_*( N[w(i^*(A))-\sup(i^*)]))]_{n+1}\text{ for any } p\ge w(i_*(B)).
\end{align}

On the other hand, 
Suppose $\oed(C)=l$. By the definition of the $\Omega$-extension dimension, there exist $\gamma \ge 0$ and $L\in B\modcat$ such that  $\Omega^{ \gamma}(C\modcat)\subseteq [L]_{l+1}$.  An analogous argument to the proof of Lemma \ref{lem-adjoint}(4) shows that
\begin{align}\label{iv-b-n-fin-jw}
\Omega_{\mathscr{D}}^p(j_!j^!(X))\in [ \Omega_{\mathscr{D}}^p(j_!(L[w(j^!(A))-\sup(j^!)])))]_{l+1}\text{ for any } p\ge w(j^!(A)).
\end{align}

Set $u:= \max\{w(j^!(A)),w(i_*(B))\}$. 
By Lemma \ref{lem-omega-shift}, applying $\Omega_{\mathscr{D}}^u$ to the triangle (\ref{can-tri-left}), we have a triangle in $\Db{A\modcat}$:
\begin{align}\label{can-tri-left-om}
\Omega_{\mathscr{D}}^u(j_{!}j^{!}(X))\lra
\Omega_{\mathscr{D}}^u(X)\oplus P_1\lra 
\Omega_{\mathscr{D}}^u(i_{*}i^*(X))
\lra \Omega_{\mathscr{D}}^u( j_{!}j^{!}(X))[1]
\end{align}
with $P_1\in \pmodcat{A}$.
Note that all term in the triangles (\ref{can-tri-left-om}) are $A$-modules. Applying the cohomological functors $H^i:=\Hom_{\Db{B\modcat}}(B,-[i])$, $i=-1,0,1$ 
to the triangles (\ref{can-tri-left-om}), we have a short exact sequence
$$0\lra \Omega_{\mathscr{D}}^u(j_{!}j^{!}(X))\lra
\Omega_{\mathscr{D}}^u(X)\oplus P_1\lra 
\Omega_{\mathscr{D}}^u(i_{*}i^*(X))\lra 0.$$
Since the operation $\bullet$ is associative, $\Omega_{\mathscr{D}}^u(X)\in [ \Omega_{\mathscr{D}}^u(j_{!}j^{!}(X))]_1\bullet[\Omega_{\mathscr{D}}^u(i_{*}i^*(X))]_1$.

Set $\alpha:=\max\{\beta,\gamma\}$.
Suppose $X\in \Omega^{\alpha}(A\modcat)$. By the relations (\ref{iv-b-n-fin}) and (\ref{iv-b-n-fin-jw}), we get
\begin{align*}
\Omega_{\mathscr{D}}^u(X)&\in [ \Omega_{\mathscr{D}}^u(j_{!}j^{!}(X))]_1\bullet[\Omega_{\mathscr{D}}^u(i_{*}i^*(X))]_1\\
&\subseteq [ \Omega_{\mathscr{D}}^u(j_!(L[w(j^!(A))-\sup(j^!)])))]_{l+1} \bullet [ \Omega_{\mathscr{D}}^u(i_*( N[w(i^*(A))-\sup(i^*)]))]_{n+1}\\
&\subseteq [ \Omega_{\mathscr{D}}^u(j_!(L[w(j^!(A))-\sup(j^!)])))\oplus  \Omega_{\mathscr{D}}^u(i_*( N[w(i^*(A))-\sup(i^*)]))]_{l+n+2}
\end{align*}
By the definition of the $\Omega$-extension dimension, we get $\oed(A)\le n+l+1=\oed(B)+\oed(C)+1.$

(iii) An analogous argument to the proof of (2)(ii) shows this assertion.

(iv) By (2)(ii), we obtain $\oed(B)\le \oed(A)$. We have to show $\oed(B)\ge \oed(A)$. Indeed, by Lemma \ref{lem-wid}(1), $\mid w(j^!)\mid<\infty$. Then 
$j^!(A\modcat)\subseteq \mathscr{D}^{[\inf(j^!),\sup(j^!)]}(C\modcat).$
Due to $s:=\gd(C)<\infty$, $$j^!(A\modcat)\subseteq \mathscr{K}^{[\inf(j^!)-s,\sup(j^!)]}(\pmodcat{C}).$$
By Lemma \ref{lem-wid}(1), $j_!(C)\in \mathscr{K}^{[\sup(j_!)-w(j_!(C)),\sup(j_!)]}(\pmodcat{A})$. Then
$$j_!j^!(A\modcat)\subseteq \mathscr{K}^{[\sup(j_!)-w(j_!(C))+\inf(j^!)-s,\sup(j_!)+\sup(j^!)]}(\pmodcat{A}).$$
By Lemma \ref{lem-adjoint}(1) and Lemma \ref{lem-wid}(1), $\sup(j_!)+\sup(j^!)=w(j_!(C))$ and $w(j^!)=w(j_!(C))$. Then
$$j_!j^!(A\modcat)\subseteq \mathscr{K}^{[-w(j_!(C))-s,w(j_!(C))]}(\pmodcat{A}).$$
Thus $\Omega_{\mathscr{D}}^p(j_!j^!(A\modcat))=0$ for $p\ge  w(j_!(C))+s+1$. By Lemma \ref{lem-omega-shift} and the triangle (\ref{can-tri-left}), there is a projective $A$-module $P_2$ such that 
\begin{align}\label{gl-main-1}
\Omega_{\mathscr{D}}^p(X)\oplus P_2\simeq  \Omega_{\mathscr{D}}^p(i_{*}i^*(X))\text{ for any }p\ge  w(j_!(C))+s+1.
\end{align}
Set $\delta:=\max\{w(j_!(C))+s+1, w(i_*(B))\}$. By the relations (\ref{iv-b-n-fin}) and (\ref{gl-main-1}), we get
\begin{align*}
\Omega_{\mathscr{D}}^{\delta}(X) \in [ \Omega_{\mathscr{D}}^{\delta}(i_*( N[w(i^*(A))-\sup(i^*)]))]_{n+1}.
\end{align*}
By the definition of the $\Omega$-extension dimension, we get $\oed(A)\le n=\oed(B).$ Thus $\oed(A)=\oed(B).$
Similarly, we can get $$\ed(\Omega^{\infty}(B\modcat))=\ed(\Omega^{\infty}(A\modcat)).$$

(v) An analogous argument to the proof of (2)(iv) shows this assertion.
\end{proof}

\begin{Koro}\label{cor-homo-ideal}
Let $A$ be an Artin algebra and $e \in A$ be an idempotent element of $A$.
Suppose that the canonical surjection $A\ra A/AeA$ is homological.

{\rm (1)} If $\pd(_{A}AeA)<\infty$, then 

{\rm (i)} $\ed(A/AeA)\le \ed(A)+\pd(_{A}AeA)+1$.

{\rm (ii)} $\ed(eAe)\le \ed(A)+\pd(_{A}AeA)$.

{\rm (iii)} $ \ed(A)\le 2\ed(A/AeA)+\ed(eAe)+\pd(_{A}AeA)+3.$

{\rm (2)} If $\pd(_{A}AeA)<\infty$ and $\pd(Ae_{eAe})<\infty$ , then 

{\rm (i)} $\ed(A)\le \ed(A/AeA)+\ed(eAe)+\pd(_{A}AeA)+2.$

{\rm (ii)} $\max\{ \oed(A/AeA),\oed(eAe)\}\le \oed(A)\le \oed(A/AeA)+\oed(eAe)+1.$

{\rm (iii)} 
$\max\{ \ed(\Omega^{\infty}(A/AeA\modcat)),\ed(\Omega^{\infty}(eAe\modcat))\}\le \ed(\Omega^{\infty}(A\modcat)),\text{ and}$
$$ \ed(\Omega^{\infty}(A\modcat))\le \ed(\Omega^{\infty}(A/AeA\modcat))+\ed(\Omega^{\infty}(eAe\modcat))+1.$$

{\rm (iv)} If, in addition, $\gd(eAe)<\infty$, then $$\oed(A/AeA)=\oed(A)\text{ and } \ed(\Omega^{\infty}(A/AeA\modcat))=\ed(\Omega^{\infty}(A\modcat)).$$

{\rm (v)} If, in addition, $\gd(A/AeA)<\infty$, then $$\oed(eAe)=\oed(A)\text{ and } \ed(\Omega^{\infty}(eAe\modcat))=\ed(\Omega^{\infty}(A\modcat)).$$
\end{Koro}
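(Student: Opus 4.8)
The plan is to deduce the corollary directly from Theorem~\ref{thm-ext-rec}, applied to the recollement of unbounded derived module categories canonically attached to the idempotent $e$, after matching the hypotheses and evaluating the two homological widths occurring in the theorem. First I would invoke the classical fact that $\pi\colon A\to A/AeA$ being homological is equivalent to the existence of a recollement of $\D{A\Modcat}$ by $\D{(A/AeA)\Modcat}$ and $\D{(eAe)\Modcat}$, in which $i_*=i_!$ is restriction of scalars along $\pi$, $j^!=j^*=\rHom_A(Ae,-)\cong eA\otimesL_A-$, $j_!=Ae\otimesL_{eAe}-$, $j_*=\rHom_{eAe}(eA,-)$, $i^*=(A/AeA)\otimesL_A-$, and $i^!=\rHom_A({}_A(A/AeA),-)$. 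Thus we are exactly in the setting of Theorem~\ref{thm-ext-rec} with $B=A/AeA$ and $C=eAe$.

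Next I would match the hypotheses. The short exact sequence $0\to AeA\to A\to A/AeA\to 0$ shows that $\pd({}_AAeA)<\infty$ is equivalent to $\pd({}_A(A/AeA))<\infty$, i.e. to $i_*(B)={}_A(A/AeA)\in\Kb{\pmodcat{A}}$, which by Lemma~\ref{lem-rec-prop}(1) is precisely the condition that the recollement extends one step downwards. Dually, $\pd((Ae)_{eAe})<\infty$ says that $Ae$ is a perfect complex of right $eAe$-modules, equivalently that $j_!=Ae\otimesL_{eAe}-$ admits a left adjoint, which by Lemma~\ref{lem-rec-prop}(2) is precisely the condition that the recollement extends one step upwards. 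Hence hypothesis~(1) of the corollary puts us into Theorem~\ref{thm-ext-rec}(1), and hypothesis~(2) into Theorem~\ref{thm-ext-rec}(2).

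For the widths I would use that a module of finite projective dimension has homological width equal to its projective dimension: thus $w(i_*(B))=\pd({}_A(A/AeA))\le\pd({}_AAeA)+1$ by the short exact sequence above, and $w(j^*(A))=\pd({}_{eAe}(eA))$. The latter I would bound by $\pd({}_AAeA)$ as follows. One has $eA=e(AeA)\cong\Hom_A(Ae,AeA)$, and the functor $\Hom_A(Ae,-)=e(-)$ is exact, since ${}_{eAe}(eA)_A$ is a direct summand of $A_A$; applying it to a projective resolution $Q_\bullet\to{}_AAeA$ of length $n:=\pd({}_AAeA)$ therefore produces an exact sequence $0\to eQ_n\to\cdots\to eQ_0\to{}_{eAe}(eA)\to 0$ whose terms are projective $eAe$-modules, because $e(-)$ carries projective $A$-modules to projective $eAe$-modules when $e$ is a homological idempotent. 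Hence $\pd({}_{eAe}(eA))\le\pd({}_AAeA)$.

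Finally, feeding $w(i_*(B))\le\pd({}_AAeA)+1$ and $w(j^*(A))\le\pd({}_AAeA)$ into Theorem~\ref{thm-ext-rec} yields the statement: part~(1) of the theorem gives (1)(i)--(iii) (in (iii) one uses $\max\{w(i_*(B)),w(j^*(A))\}\le\pd({}_AAeA)+1$ to replace the additive constant $2$ by $3$), and part~(2) gives (2)(i) (the same remark replacing $1$ by $2$) together with (2)(ii)--(v), which follow upon substituting $B=A/AeA$, $C=eAe$ and noting $\gd(C)=\gd(eAe)$, $\gd(B)=\gd(A/AeA)$. The only delicate point is the inequality $\pd({}_{eAe}(eA))\le\pd({}_AAeA)$ of the third paragraph --- equivalently, that $e(-)$ preserves projectivity for a homological idempotent --- which is where the homological hypothesis on $\pi$ is genuinely used, and which it is probably cleanest to isolate as a preliminary lemma; the reverse inequality $\pd({}_AAeA)\le\pd({}_{eAe}(eA))$ (not needed here) is the easy half, since $Ae\otimesL_{eAe}-$ visibly preserves projectives.
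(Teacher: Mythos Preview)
Your overall strategy coincides with the paper's: set up the standard recollement attached to the idempotent $e$ (with $B=A/AeA$, $C=eAe$), verify via Lemma~\ref{lem-rec-prop} that $\pd({}_AAeA)<\infty$ and $\pd((Ae)_{eAe})<\infty$ correspond to extending one step downwards and upwards, compute the two widths, and plug into Theorem~\ref{thm-ext-rec}. The paper also uses the exact identification $w(i_*(B))=\pd({}_A A/AeA)=\pd({}_AAeA)+1$ and $w(j^!(A))=\pd({}_{eAe}eA)$, just as you do.

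The one genuine gap is your argument for $\pd({}_{eAe}eA)\le\pd({}_AAeA)$. You claim that $e(-)$ carries projective $A$-modules to projective $eAe$-modules when $e$ is a homological idempotent, but this is false: $e(-)$ sends $\add({}_AA)$ into $\add({}_{eAe}eA)$, and ${}_{eAe}eA$ is projective precisely when $\pd({}_{eAe}eA)=0$, which is not a consequence of $\pi$ being homological. Indeed, the paper's own assertion $\pd({}_{eAe}eA)=\pd({}_AAeA)$ shows that in any example with $\pd({}_AAeA)>0$ your premise fails, so the displayed ``resolution'' $0\to eQ_n\to\cdots\to eQ_0\to{}_{eAe}eA\to0$ is not by projectives and gives no bound. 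The paper does not try to prove this inequality directly; it simply invokes \cite[Corollary~3.2]{apt92} (Auslander--Platzeck--Todorov) for the equality $\pd({}_{eAe}eA)=\pd({}_AAeA)$. You should replace your third paragraph by a citation of that result (or reproduce its proof, which goes through a different mechanism than ``$e(-)$ preserves projectives'').
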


\begin{proof}
From \cite{CPS}, there is a recollement
\begin{align}\label{rec-strat}
\xymatrixcolsep{4pc}\xymatrix{
\D{A/AeA\Modcat} \ar[r]|{i_*=i_!} &\D{A\Modcat} \ar@<-2ex>[l]|{i^*} \ar@<2ex>[l]|{i^!} \ar[r]|{j^!=j^*}  &\D{eAe\Modcat}, \ar@<-2ex>[l]|{j_!} \ar@<2ex>[l]|{j_{*}}
}
\end{align}
where $i_{*}=A/AeA\otimes_{A/AeA}-$, $j_{!}=Ae\otimes_{eAe}^{\mathbb{L}}-$ and $j^!=eA\otimes_{A}^{\mathbb{L}}-$. 

(1) Suppose that $\pd(_AAeA)<\infty$. Then $\pd(_AA/AeA)<\infty$. By Lemma \ref{lem-rec-prop}(1), $i_{*}$ restricts to $\Kb{\rm proj}$ and the recollement (\ref{rec-strat}) extends one step downwards. Note that $w(i_*(A/AeA))=\pd(_AA/AeA)=\pd(_AAeA)+1$. %\cite[Lemma 2.3]{xu14}
It follows from \cite[Corollary 3.2]{apt92} that $\pd(_{eAe}eA)=\pd(_AAeA)$. Then
$w(j^!(A))=\pd(_{eAe}eA)=\pd(_AAeA)$. Now, Corollary \ref{cor-homo-ideal}(1) follows from Theorem \ref{thm-ext-rec}(1).

(2) Suppose $\pd(Ae_{eAe})<\infty$. Then $\mathbb{R}\Hom_{\D{{eAe\text{-Mod}}}}(\mathbb{R}\Hom_{\D{eAe^{\opp}\Modcat}}(Ae,eAe),eAe)\simeq Ae$ in $\D{eAe\mbox{-mod}}$. Thus $j_{!}=Ae\otimes_{eAe}^{\mathbb{L}}-$ has the left adjoint $\mathbb{R}\Hom_{\D{eAe^{\opp}\Modcat}}(Ae,eAe)\otimes_{A}^{\mathbb{L}}-$. By Lemma \ref{lem-rec-prop}(2), the recollement (\ref{rec-strat}) extends one step upwards. Then Corollary \ref{cor-homo-ideal}(2) follows from Corollary \ref{cor-homo-ideal}(1) and Theorem \ref{thm-ext-rec}(2).
\end{proof}

\begin{Koro}\label{cor-tri-ex}
Let
$A=\left(\begin{smallmatrix}
B&M\\
0&C
\end{smallmatrix}\right)$
be the triangular matrix algebra. The the following holds.

{\rm (1)} $\max\{\ed(B),\ed(C)\}\le \ed(A)\le 2\ed(B)+\ed(C)+2.$

{\rm (2)} If $\pd(M_C)<\infty$, then 

{\rm (i)}  $\max\{\ed(B),\ed(C)\}\le \ed(A)\le \ed(B)+\ed(C)+1.$

{\rm (ii)} $\max\{ \oed(B),\oed(C)\}\le \oed(A)\le \oed(B)+\oed(C)+1.$

{\rm (iii)} 
$\max\{ \ed(\Omega^{\infty}(B\modcat)),\ed(\Omega^{\infty}(C\modcat))\}\le \ed(\Omega^{\infty}(A\modcat)),\text{ and}$
$$ \ed(\Omega^{\infty}(A\modcat))\le \ed(\Omega^{\infty}(B\modcat))+\ed(\Omega^{\infty}(C\modcat))+1.$$

{\rm (iv)} If, in addition, $\gd(C)<\infty$, then $$\oed(B)=\oed(A)\text{ and } \ed(\Omega^{\infty}(B\modcat))=\ed(\Omega^{\infty}(A\modcat)).$$

{\rm (v)} If, in addition, $\gd(B)<\infty$, then $$\oed(C)=\oed(A)\text{ and } \ed(\Omega^{\infty}(C\modcat))=\ed(\Omega^{\infty}(A\modcat)).$$

{\rm (3)} If $\pd(_BM)<\infty$, then

{\rm (i)} $\max\{\ed(B),\ed(C)\}\le \ed(A)\le \ed(B)+\ed(C)+\pd(_{B}M)+2.$

{\rm (ii)} $\max\{ \oed(B),\oed(C)\}\le \oed(A)\le \oed(B)+\oed(C)+1.$

{\rm (iii)} 
$\max\{ \ed(\Omega^{\infty}(B\modcat)),\ed(\Omega^{\infty}(C\modcat))\}\le \ed(\Omega^{\infty}(A\modcat)),\text{ and}$
$$ \ed(\Omega^{\infty}(A\modcat))\le \ed(\Omega^{\infty}(B\modcat))+\ed(\Omega^{\infty}(C\modcat))+1.$$

{\rm (iv)} If, in addition, $\gd(C)<\infty$, then $$\oed(B)=\oed(A)\text{ and } \ed(\Omega^{\infty}(B\modcat))=\ed(\Omega^{\infty}(A\modcat)).$$

{\rm (v)} If, in addition, $\gd(B)<\infty$, then $$\oed(C)=\oed(A)\text{ and } \ed(\Omega^{\infty}(C\modcat))=\ed(\Omega^{\infty}(A\modcat)).$$
\end{Koro}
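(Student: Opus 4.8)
The plan is to realize $A$ as the ambient algebra of two recollements of derived module categories and then apply Theorem~\ref{thm-ext-rec} after computing, by hand, the four homological widths occurring in its statement. The reason not to simply quote Corollary~\ref{cor-homo-ideal} for parts~(1)--(2) is that the crude estimate $w(i_{*}(A/AeA))\le\pd({}_{A}AeA)+1$ used there is not sharp for triangular matrix algebras: one of the two relevant quotient modules is actually projective. Let $e_{1},e_{2}\in A$ be the diagonal idempotents supported on $B$ and on $C$ respectively, so that $e_{1}Ae_{1}\cong B$, $e_{2}Ae_{2}\cong C$, $A/Ae_{2}A\cong B$ and $A/Ae_{1}A\cong C$; here $Ae_{2}A=Ae_{2}$ is a projective left $A$-module, whereas $Ae_{1}A$ decomposes (as a left $A$-module, in the triple description) as $(B,0,0)\oplus(M,0,0)$, so $\pd_{A}({}_{A}Ae_{1}A)=\pd_{B}(M)$ and $\pd_{A}({}_{A}(A/Ae_{1}A))=\pd_{B}(M)+1$. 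First I would verify that both surjections $A\to A/Ae_{2}A\cong B$ and $A\to A/Ae_{1}A\cong C$ are homological: the first because $0\to Ae_{2}A\to A\to B\to0$ splits, the second because the minimal projective resolution of ${}_{A}(A/Ae_{1}A)$ has all higher projective terms in $\add({}_{A}Ae_{1})$, which forces the relevant ${\rm Tor}$-groups to vanish. By \cite{CPS} this produces two recollements
\[
\mathcal R_{1}\colon\ \D{B\Modcat}\to\D{A\Modcat}\to\D{C\Modcat},\qquad
\mathcal R_{2}\colon\ \D{C\Modcat}\to\D{A\Modcat}\to\D{B\Modcat}.
\]

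Next I would record the widths, using the standard identification of $A\modcat$ with the category of triples $(X,Y,f\colon M\otimes_{C}Y\to X)$, $X\in B\modcat$, $Y\in C\modcat$. For $\mathcal R_{1}$, the functor $i_{*}$ carries ${}_{B}B$ to ${}_{A}(A/Ae_{2}A)\cong{}_{A}Ae_{1}$, which is projective, and $j^{*}$ carries ${}_{A}A$ to ${}_{e_{2}Ae_{2}}(e_{2}A)\cong{}_{C}C$, so $w(i_{*}(B))=0=w(j^{*}(A))$; moreover $(Ae_{2})_{e_{2}Ae_{2}}\cong M_{C}\oplus C_{C}$, so $\mathcal R_{1}$ always extends one step downwards (since $i_{*}(B)\in\Kb{\pmodcat{A}}$) and extends one step upwards exactly when $\pd(M_{C})<\infty$. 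For $\mathcal R_{2}$, the functor $i_{*}$ carries ${}_{C}C$ to ${}_{A}(A/Ae_{1}A)$, of projective dimension $\pd_{B}(M)+1$, and $j^{*}$ carries ${}_{A}A$ to ${}_{e_{1}Ae_{1}}(e_{1}A)\cong{}_{B}B\oplus{}_{B}M$, so $w(j^{*}(A))=\pd_{B}(M)$; since $(Ae_{1})_{e_{1}Ae_{1}}\cong B_{B}$ is free, $\mathcal R_{2}$ always extends one step upwards and extends one step downwards exactly when $\pd({}_{B}M)<\infty$. Each of these is a routine computation with triples.

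Finally I would substitute into Theorem~\ref{thm-ext-rec}. Applying Theorem~\ref{thm-ext-rec}(1) to $\mathcal R_{1}$ with $\max\{w(i_{*}(B)),w(j^{*}(A))\}=0$ yields $\ed(B)\le\ed(A)$, $\ed(C)\le\ed(A)$, and $\ed(A)\le2\ed(B)+\ed(C)+2$, which is~(1). If $\pd(M_{C})<\infty$ then $\mathcal R_{1}$ also extends one step upwards, so Theorem~\ref{thm-ext-rec}(2) applies with both widths $0$; since the left and right vertices of $\mathcal R_{1}$ are $B$ and $C$, its parts (i)--(v) become literally (2)(i)--(v) once the $\ed$-lower bound in (2)(i) is imported from part~(1). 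If $\pd({}_{B}M)<\infty$ then $\mathcal R_{2}$ extends one step in both directions, so Theorem~\ref{thm-ext-rec}(2) applies to $\mathcal R_{2}$; here the left and right vertices are $C$ and $B$, so part~(2)(i) of the theorem reads
\[
\ed(A)\le\ed(C)+\ed(B)+\max\{w(i_{*}(C)),w(j^{*}(A))\}+1=\ed(B)+\ed(C)+\pd_{B}(M)+2,
\]
and parts (ii)--(v), read after interchanging the roles of $B$ and $C$, are exactly (3)(ii)--(v); the lower bound $\max\{\ed(B),\ed(C)\}\le\ed(A)$ in (3)(i) is again quoted from~(1). I expect the main obstacle to be precisely this last piece of bookkeeping for $\mathcal R_{2}$: checking that $A\to A/Ae_{1}A$ is homological with no hypothesis on $M$, computing $\pd_{A}({}_{A}(A/Ae_{1}A))=\pd_{B}(M)+1$, and verifying that after the $B\leftrightarrow C$ relabelling the numerical coefficients of Theorem~\ref{thm-ext-rec}(2) land exactly on those asserted; the rest is direct substitution.
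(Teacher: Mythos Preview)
Your proposal is correct and matches the paper's proof: both use the recollement coming from the idempotent $e_{2}$ (your $\mathcal{R}_{1}$) for parts~(1)--(2), with $w(i_{*}(B))=w(j^{*}(A))=0$, and then invoke Theorem~\ref{thm-ext-rec}(1) and~(2) respectively. For part~(3) the paper obtains the second recollement by shifting $\mathcal{R}_{1}$ one step down in the ladder (checking that $i^{!}(A)=e_{1}A\in\Kb{\pmodcat{B}}$ when $\pd({}_{B}M)<\infty$, so that $\mathcal{R}_{1}$ extends two steps downwards) rather than by constructing it directly from the idempotent $e_{1}$ as you do, but the resulting recollement, the widths $w(j_{*}(C))=\pd_{A}(C)=\pd_{B}(M)+1$ and $w(i^{!}(A))=\pd_{B}(M)$, and the substitution into Theorem~\ref{thm-ext-rec}(2) are identical.
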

\begin{proof}
(1) Let $e_1:=\left(\begin{smallmatrix}
1&0\\
0&0
\end{smallmatrix}\right)$ and $e_2:=\left(\begin{smallmatrix}
0&0\\
0&1
\end{smallmatrix}\right)$. It follows from $B=Ae_1$ that the canonical surjection $A\ra A/Ae_2A$ is homological.
Then we have a recollement
\begin{align}\label{rec-tri}
\xymatrixcolsep{4pc}\xymatrix{
\D{B\Modcat} \ar[r]|{i_*=i_!} &\D{A\Modcat} \ar@<-2ex>[l]|{i^*} \ar@<2ex>[l]|{i^!} \ar[r]|{j^!=j^*}  &\D{C\Modcat}, \ar@<-2ex>[l]|{j_!} \ar@<2ex>[l]|{j_{*}}
}
\end{align}
where $i_{*}=Ae_1\otimes_{B}^{\mathbb{L}}-$, $i^{!}=e_1A\otimes_{A}^{\mathbb{L}}-$, $j_{!}=Ae_2\otimes_{C}^{\mathbb{L}}-$ and $j^!=e_2A\otimes_{A}^{\mathbb{L}}-$. Note that $i_{*}(B)=Ae_1$. Since $Ae_1$ is a projective $A$-module,  $i_{*}$ restricts to $\Kb{\rm proj}$ and the recollement (\ref{rec-strat}) extends one step downwards by Lemma \ref{lem-rec-prop}(1). Note that $w(i_{*}(B))=\pd_{A}(B)=0$ and $w(j^!(A))=\pd_{C}(e_2A)=\pd_{C}(C)=0$. Then Corollary \ref{cor-tri-ex}(1) follows from Theorem \ref{thm-ext-rec}(2).

(2) Suppose $\pd(M_C)<\infty$. Then $j_{!}(A)=Ae_2\simeq C\oplus M$ as right $C$-modules. Thus $\pd_{C^{\opp}}(Ae_2)<\infty$. Note that $\mathbb{R}\Hom_{\D{C\Modcat}}(\mathbb{R}\Hom_{\D{C^{\opp}\Modcat}}(Ae_2,C),C)\simeq Ae_2$ in $\D{C\Modcat}$. Thus $j_{!}=Ae_2\otimes_{C}^{\mathbb{L}}-$ has the left adjoint $\mathbb{R}\Hom_{\D{C^{\opp}\Modcat}}(Ae_2,C)\otimes_{A}^{\mathbb{L}}-$. By Lemma \ref{lem-rec-prop}(2), the recollement (\ref{rec-strat}) extends one step upwards. Then Corollary \ref{cor-tri-ex}(2) follows from Corollary \ref{cor-tri-ex}(1) and Theorem \ref{thm-ext-rec}(2).

(3) Suppose $\pd(_BM)<\infty$. It follows from $i^{!}(A)=e_1A\simeq B\oplus M$ as $B$-modules that $w(i^{!}(A))=\pd(_Bi^{!}(A))=\pd(_BM)<\infty$. By Lemma \ref{lem-rec-prop}(1), $i^{!}$ restricts to $\Kb{\rm proj}$ and the recollement (\ref{rec-strat}) extends two step downwards. It follows from
$Ae_1A\simeq \left(\begin{smallmatrix}
B\\
0
\end{smallmatrix}\right)
\oplus 
\left(\begin{smallmatrix}
M\\
0
\end{smallmatrix}\right)
$
as $A$-modules that $\pd_{A}(Ae_1A)=\pd_{B}(M)$. Due to $C=A/Ae_1A$, $\pd_{A}(C)=\pd_{B}(M)+1$. Also $j_*(C)\simeq C$ as $A$-modules. Thus $w(j_*(C))=\pd_{B}(M)+1$.
Now, Corollary \ref{cor-tri-ex}(3) follows from Corollary \ref{cor-tri-ex}(1) and Theorem \ref{thm-ext-rec}(2).
\end{proof}

Now, we apply Theorem \ref{thm-ext-rec}(1) to exact context.

Let $R$, $S$ and $T$ be Artin algebras, and let $\lambda:R\to S$ and $\mu:R\to T$ be algebra homomorphisms.
Suppose that $M$ is an $S$-$T$-bimodule together with an element $m\in M$. We say that the quadruple $(\lambda, \mu, M, m)$ is an \emph{exact context} if the following sequence
$$
0\lra R\lraf{(\lambda,\,\mu)}S\oplus T
\lraf{\left({\cdot\,m\,\atop{-m\,\cdot}}\right)}M\lra 0
$$
is an exact sequence of abelian groups, where $\cdot m$ and $m \cdot$ denote the right and left multiplication by $m$ maps, respectively. 

Given an exact context $(\lambda,\mu, M, m)$, there is defined an algebra with identity in \cite{cx19}, called the \emph{noncommutative tensor product} of $(\lambda,\mu, M, m)$ and denoted by  $T\boxtimes_RS$ if the meaning of the exact context is clear. This notion plays a key role in describing the left parts of recollements induced from homological exact contexts (see \cite[Theorem 1.1]{cx19}).

\begin{Theo} \label{hom-dim}
Let $(\lambda, \mu, M, m)$ be an exact context with the noncommutative tensor product $T\boxtimes_RS$. 
Suppose that ${\rm Tor}^R_i(T,S)=0$ for all $i\ge 1$. If $\pd(_RS)<\infty$, then the following hold true:

{\rm (1)} $\ed(T\boxtimes_RS)\le 2\ed(S)+\ed(T)+2+\max\{2,\pd(_RS)+1\}$.

{\rm (2)} $\ed(R)\le 2\ed(S)+\ed(T)+2+\max\{\pd(_RS),1\}$.

{\rm (3)} $ \ed(\left(\begin{smallmatrix}
S&M\\
0&T
\end{smallmatrix}\right))\le 2\ed(T\boxtimes_RS)+\ed(R)+\max\{2,\pd(_RS)+1\}+2.$
\end{Theo}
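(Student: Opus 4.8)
The plan is to deduce all three inequalities from the recollement-theoretic machinery built up in Theorem \ref{thm-ext-rec}, by exhibiting the relevant algebras as vertices of concrete recollements of derived module categories and then reading off the homological widths of the structural functors. The key input is \cite[Theorem 1.1]{cx19}: under the hypotheses ${\rm Tor}^R_i(T,S)=0$ for $i\ge 1$, the homological exact context $(\lambda,\mu,M,m)$ produces a homological ring epimorphism and an associated recollement whose outer terms involve $R$, $T\boxtimes_RS$ and the triangular matrix algebra $\Lambda:=\left(\begin{smallmatrix}S&M\\0&T\end{smallmatrix}\right)$. So first I would recall precisely which recollement \cite{cx19} supplies; the expected shape is a recollement of $\D{R\Modcat}$ (or of $\D{\Lambda\Modcat}$) with the other two corners being the derived categories of $T\boxtimes_RS$ and of a suitable localization, and the point is that the Tor-vanishing makes the relevant tensor functors exact enough that the recollement extends one step downwards once $\pd({}_RS)<\infty$.

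Next I would compute the homological widths of the six functors at the two ``ideal'' slots of that recollement. Here the hypothesis $\pd({}_RS)<\infty$ does the work: just as in the proof of Corollary \ref{cor-homo-ideal}, finiteness of the projective dimension of the module defining the ring epimorphism forces the functor $i_*$ to restrict to $\Kb{\rm proj}$ (Lemma \ref{lem-rec-prop}(1)), so the recollement extends one step downwards, and the homological width $w(i_*(-))$ is controlled by $\pd({}_RS)$ — more precisely by $\max\{2,\pd({}_RS)+1\}$ in the cases where a matrix-algebra bimodule of the form $\left(\begin{smallmatrix}S\\0\end{smallmatrix}\right)\oplus\left(\begin{smallmatrix}M\\0\end{smallmatrix}\right)$ intervenes (cf. the computation $\pd_\Lambda(\Lambda e\Lambda)=\pd_S(M)$ type bookkeeping in the proof of Corollary \ref{cor-tri-ex}(3)), and by $\max\{\pd({}_RS),1\}$ or simply $\pd({}_RS)$ in the other slot after applying \cite[Corollary 3.2]{apt92} / \cite[Corollary 3.2]{apt92}-style dimension-shifting to pass between $\pd({}_RS)$ and $\pd$ over $T\boxtimes_RS$. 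I would track each of these three numerical bounds separately, matching them to statements (1), (2), (3): statement (2), which bounds $\ed(R)$, uses $R$ as the middle term of the recollement so that Theorem \ref{thm-ext-rec}(1)(iii) gives $\ed(R)\le 2\ed(\text{one corner})+\ed(\text{other corner})+\max\{w(i_*(-)),w(j^*(-))\}+2$ with the two corners being (the derived categories of) $S$, $T$ via $\Lambda$; statements (1) and (3), which bound $\ed(T\boxtimes_RS)$ and $\ed(\Lambda)$, use those algebras as the middle term and invoke Theorem \ref{thm-ext-rec}(1)(i)–(iii) accordingly, feeding in the already-established bound from (2) where needed (this is why $\ed(R)$ and $\ed(\Lambda)$ reappear on the right-hand sides).

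Concretely the order of steps is: (a) write down the \cite{cx19} recollement(s) and verify the ``extends one step downwards'' hypothesis using $\pd({}_RS)<\infty$ and Lemma \ref{lem-rec-prop}(1); (b) identify the structural modules $i_*(-)$ and $j^*(-)$ explicitly as $R$-, $\Lambda$-, or $(T\boxtimes_RS)$-modules and bound their homological widths by the indicated functions of $\pd({}_RS)$, using \cite[Corollary 3.2]{apt92} to transfer projective dimensions across the ring epimorphism and the triangular-matrix identities $\pd_\Lambda\!\left(\begin{smallmatrix}X\\0\end{smallmatrix}\right)=\pd_S(X)$; (c) apply Theorem \ref{thm-ext-rec}(1) three times with the three choices of middle algebra to get (1), (2), (3), substituting the $\ed(R)$ bound from (2) into (1) and (3) as required to eliminate $\ed(R)$-dependence or to produce the stated mixed bounds.

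The main obstacle I anticipate is step (b): correctly pinning down \emph{which} algebra sits in \emph{which} corner of the \cite{cx19} recollement and computing the homological width of each structural bimodule, because the exact-context setup produces several a priori different recollements (one for $R$, one for $\Lambda$, possibly with $T\boxtimes_RS$ appearing either as a middle term or as an outer term), and the asymmetric constants $\max\{2,\pd({}_RS)+1\}$ versus $\max\{\pd({}_RS),1\}$ versus $\pd({}_RS)$ in the three conclusions must be matched exactly to the right functor in the right recollement — a sign or an off-by-one here would break the statement. Everything else is a mechanical application of Theorem \ref{thm-ext-rec}(1) and the dimension-shifting lemmas already in the paper.
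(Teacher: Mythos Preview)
Your high-level strategy is right---use the \cite{cx19} recollement and apply Theorem \ref{thm-ext-rec}(1)---but you have the architecture of the recollement wrong, and this propagates into an incorrect plan for all three statements.

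There is a \emph{single} recollement here, not three. By \cite[Theorem 1.1]{cx19} the middle term is $\D{\Lambda\Modcat}$ with $\Lambda=\left(\begin{smallmatrix}S&M\\0&T\end{smallmatrix}\right)$, the left corner is $\D{M_2(T\boxtimes_RS)\Modcat}$ (Morita equivalent to $T\boxtimes_RS$), and the right corner is $\D{R\Modcat}$. The functor $i_*$ is the restriction along the ring map $\theta:\Lambda\to M_2(T\boxtimes_RS)$, and $j_!={}_\Lambda\cpx{P}\otimes^{\mathbb L}_R-$ for an explicit two-term complex $\cpx{P}$ of projective $\Lambda$-modules. One then checks $w(i_*(B))=\pd({}_\Lambda M_2(T\boxtimes_RS))\le\max\{2,\pd({}_RS)+1\}$ (this is \cite[Corollary 5.8(1)]{cx12-01}, not \cite{apt92}) and $w(j^!(\Lambda))=\max\{\pd({}_RS),1\}$ by identifying $j^!(\Lambda)\simeq (S\oplus{\rm Con}(\lambda))[-1]$ in $\D{R}$.

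With this single recollement, the three parts of Theorem \ref{thm-ext-rec}(1) give three inequalities directly: (1)(i) bounds $\ed(T\boxtimes_RS)$ by $\ed(\Lambda)+w(i_*(B))$; (1)(ii) bounds $\ed(R)$ by $\ed(\Lambda)+w(j^!(\Lambda))$; (1)(iii) is exactly statement (3). To finish (1) and (2) you need one more ingredient you do not mention: the triangular-matrix bound $\ed(\Lambda)\le 2\ed(S)+\ed(T)+2$ from Corollary \ref{cor-tri-ex}(1). Substituting this into the outputs of (1)(i) and (1)(ii) is what produces the terms $2\ed(S)+\ed(T)+2$ on the right-hand sides of statements (1) and (2). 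Your plan to ``use $R$ as the middle term'' and invoke (1)(iii) for statement (2) would not give the stated bound and is not how $S$ and $T$ enter the picture; they enter only through the triangular structure of $\Lambda$, not through any recollement with $S$ or $T$ as a corner.
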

\begin{proof}
For the exact context $(\lambda,\mu, M, m),$ we have the following two algebra homomorphisms
$$
\rho: S\to T\boxtimes_RS,\; s\mapsto 1\otimes s\;\;\mbox{for}\;\;s\in S,\;\;\mbox{and}\;\;
\phi: T\to T\boxtimes_RS,\; t\mapsto t\otimes 1\;\;\mbox{for}\;\; t\in
T.
$$
Note that $T\boxtimes_RS$ has $T\otimes_RS$ as its abelian groups, while its multiplication is different from the usual tensor product (see \cite{cx19} for details). Let 
$A:=\left(\begin{smallmatrix}
S&M\\
0&T
\end{smallmatrix}\right)$, $B:=M_2(T\boxtimes_RS)$ and
$ \theta:=\left(\begin{smallmatrix}
\rho & \beta\\
0 & \phi
\end{smallmatrix}\right)
: \; A \ra B, $ where $\beta: M\ra T\otimes_RS$ is the unique $R$-$R$-bimodule homomorphism such that $\phi=(m\cdot)\beta$ and $\rho=(\cdot m)\beta$.

Let $
\varphi:\left(\begin{smallmatrix}
S \\
0 
\end{smallmatrix}\right)
\ra 
\left(\begin{smallmatrix}
M \\
T
\end{smallmatrix}\right),\;
\left(\begin{smallmatrix}
s \\
0 
\end{smallmatrix}\right)
\mapsto
\left(\begin{smallmatrix}
sm\\
0 
\end{smallmatrix}\right)
\;\;\mbox{for}\;\; s\in S.$
Then $\varphi$ is a homomorphism of $A$-$R$-bimodules. Denote by $\cpx{P}$ the mapping cone
of $\varphi$. Then $\cpx{P}\in\Cb{A\otimes_{\mathbb{Z}}R\opp}$ and $_A\cpx{P}\in\Cb{\pmodcat{A}}$.
In particular, $\cpx{P}\in \Kb{\pmodcat{A}}$.

By \cite[Theorem 1.1]{cx19}, if ${\rm Tor}^R_i(T,S)=0$ for all $i\ge 1$, then  there is a recollement of derived categories:
\begin{align}\label{rec-exact}
\xymatrixcolsep{4pc}\xymatrix{
\D{B\Modcat} \ar[r]|{i_*=i_!} &\D{A\Modcat} \ar@<-2ex>[l]|{i^*} \ar@<2ex>[l]|{i^!} \ar[r]|{j^!=j^*}  &\D{R\Modcat}, \ar@<-2ex>[l]|{j_!} \ar@<2ex>[l]|{j_{*}}
}
\end{align}
\noindent where $j_!:={_A}\cpx{P}\otimes^{\mathbb{L}}_R-$, $j^!:=\cpx{\Hom}_A(\cpx{P},-)$ and $i_*:={_A}B\otimes^{\mathbb{L}}_B-$ is the restriction functor induced from the algebra homomorphism $\theta:A\ra B$. 
By \cite[Corollary 5.8(1)]{cx12-01}, it follows from $\pd(_RS)<\infty$ that $w(i_*(B))=\pd(_AB)\le \max\{2,\pd(_RS)+1\}$. 
Thus $i_*(B)\in\Kb{\pmodcat{A}}$ and
the recollement $(\ref{rec-exact})$ extends one step downwards.
By an easy calculation, we know that
$$j^!(A)=\cpx{\Hom}_A(\cpx{P},A)\simeq (S\oplus {\rm Con(m\cdot)})[-1]$$
in $\D{C\Modcat}$, where ${\rm Con(m\cdot)}$ stands for the two-term complex $0\to T \stackrel{ m\cdot}{\rightarrow} M\to 0$ with $M$ of degree $0$. 
Since the sequence 
$
0\lra R\lraf{(\lambda,\,\mu)}S\oplus T
\lraf{\left({\cdot\,m\,\atop{-m\,\cdot}}\right)}M\lra 0
$ is exact, we have ${\rm Con}(m\cdot )\simeq {\rm Con}(\lambda)$ in $\D{R\Modcat}$, where ${\rm Con}(\lambda)$ stands for the two-term complex $0\to R \stackrel{\lambda}{\rightarrow} S\to 0$ with $R$ of degree $-1$. 
Due to $\pd(_RS)<\infty$, $j^!(A)\in \Kb{\pmodcat{R}}$ with $w(j^!(A))=\max\{\pd(_RS),1\}.$ 

Since $B:=M_2(T\boxtimes_RS)$ is Morita equivalent to $T\boxtimes_RS$, we have $\ed(C)=\ed(T\boxtimes_RS)$.
Since $A$ is a triangular matrix algebra with the algebras $S$ and $T$ in the diagonal, it follows from Corollary \ref{cor-tri-ex}(1) that $\max\{\ed(S),\ed(T)\}\le \ed(A)\le 2\ed(S)+\ed(T)+2$.
Then this assertion follows from Theorem \ref{thm-ext-rec}(1).
\end{proof}

\begin{Koro} \label{hom-dim-ring}
Suppose that $R\subseteq S$ is an extension of Artin algebras. Let $S'$ be the endomorphism algebra of the $R$-module $S/R$. If $_RS$ is projective, then the following hold true:

{\rm (1)} $\ed(S'\boxtimes_RS)\le 2\ed(S)+\ed(S')+4$.

{\rm (2)} $\ed(R)\le 2\ed(S)+\ed(S')+3$.

{\rm (3)} $ \ed(\left(\begin{smallmatrix}
S&\Hom_R(S,S/R)\\
0&S'
\end{smallmatrix}\right))\le 2\ed(S'\boxtimes_RS)+\ed(R)+4.$
\end{Koro}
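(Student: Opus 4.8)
The plan is to realize the ring extension $R\subseteq S$ as a homological exact context and then apply Theorem~\ref{hom-dim}. Put $\lambda\colon R\hookrightarrow S$ the inclusion, $T:=S'=\End_R(S/R)$, and let $\mu\colon R\to S'$ be the map sending $r\in R$ to the $R$-endomorphism $\overline{x}\mapsto\overline{xr}$ of $S/R$; this is well defined since $Rr\subseteq R$, and with the composition convention of the paper it is an algebra homomorphism. Set $M:=\Hom_R(S,S/R)$, viewed as an $S$-$S'$-bimodule via $(s\cdot f)(x)=f(xs)$ and $(f\cdot t)(x)=t(f(x))$, and let $m:=\pi\in M$ be the canonical surjection $\pi\colon S\to S/R$. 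With these choices $T\boxtimes_RS=S'\boxtimes_RS$ is the associated noncommutative tensor product of \cite{cx19}, and $\left(\begin{smallmatrix}S&M\\0&T\end{smallmatrix}\right)=\left(\begin{smallmatrix}S&\Hom_R(S,S/R)\\0&S'\end{smallmatrix}\right)$.

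The first step is to check that $(\lambda,\mu,M,m)$ is an exact context, i.e. that
\[
0\lra R\lraf{(\lambda,\,\mu)}S\oplus S'
\lraf{\left({\cdot\,m\,\atop{-m\,\cdot}}\right)}M\lra 0
\]
is exact. This is the standard exact context attached to a ring extension (see \cite{cx19}), and I would verify it directly: the composite is zero because $(r\cdot\pi)(x)=\overline{xr}=\mu(r)(\overline{x})=(\pi\cdot\mu(r))(x)$; $(\lambda,\mu)$ is injective since $\lambda$ is; if $s\cdot\pi=\pi\cdot t$, evaluating at $1$ gives $\overline{s}=t(0)=0$, so $s\in R$ and then $t=\mu(s)$, which gives exactness in the middle; and for surjectivity, given $f\in\Hom_R(S,S/R)$ one picks $s\in S$ with $\overline{s}=f(1)$, so that $f-s\cdot\pi$ vanishes on $R$ and hence equals $\pi\cdot t$ for a unique $t\in S'$.

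Next I would check the hypotheses of Theorem~\ref{hom-dim} for this exact context. Since $_RS$ is projective it is flat, so ${\rm Tor}_i^R(S',S)=0$ for all $i\geq 1$, and $\pd(_RS)=0<\infty$. Hence Theorem~\ref{hom-dim} applies, and it remains only to substitute $\pd(_RS)=0$ together with $T=S'$ and $M=\Hom_R(S,S/R)$ into its three inequalities: since $\max\{2,\pd(_RS)+1\}=2$ and $\max\{\pd(_RS),1\}=1$, part~(1) of Theorem~\ref{hom-dim} gives $\ed(S'\boxtimes_RS)\leq 2\ed(S)+\ed(S')+4$, part~(2) gives $\ed(R)\leq 2\ed(S)+\ed(S')+3$, and part~(3) gives $\ed\!\left(\begin{smallmatrix}S&\Hom_R(S,S/R)\\0&S'\end{smallmatrix}\right)\leq 2\ed(S'\boxtimes_RS)+\ed(R)+4$, which are exactly the claimed bounds.

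The main obstacle is the first step: confirming that $(\lambda,\mu,M,m)$ really is an exact context with bimodule structures compatible with the conventions of \cite{cx19} and of the present paper. Once that is in place, Corollary~\ref{hom-dim-ring} is just the specialization of Theorem~\ref{hom-dim} at $\pd(_RS)=0$.
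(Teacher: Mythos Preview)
Your proposal is correct and follows the same approach as the paper: set up the exact context $(\lambda,\mu,\Hom_R(S,S/R),\pi)$ attached to the extension $R\subseteq S$, observe that projectivity of $_RS$ gives both the Tor-vanishing and $\pd(_RS)=0$, and apply Theorem~\ref{hom-dim}. The only difference is cosmetic: the paper simply cites \cite[Section~3]{cx12-01} for the fact that this quadruple is an exact context, whereas you sketch the verification directly, and you spell out the substitution $\max\{2,\pd(_RS)+1\}=2$, $\max\{\pd(_RS),1\}=1$ that the paper leaves implicit.
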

\begin{proof}
Let $\lambda:R\rightarrow S$ be the inclusion, $\pi:S\rightarrow S/R$ the
canonical surjection and $\lambda':R\rightarrow S'$ the induced map by right multiplication.
Recall from \cite[Section 3]{cx12-01} that the quadruple $\big(\lambda, \lambda', \Hom_R(S,S/R),\pi \big)$ is an exact context and the noncommutative tensor product $S'\boxtimes_RS$ of this exact context is well defined. It follows from $S\in \pmodcat{R}$ that  ${\rm Tor}^R_i(S', S)=0$ for all $i\ge 1$.
Then Corollary \ref{hom-dim-ring} follows from Theorem \ref{hom-dim}.
\end{proof}

Let $R$ be an Artin algebra and   $M$ an $R$-$R$-bimodule. Recall that the \emph{trivial extension} of $R$ by $M$ is an algebra, denoted by $R\ltimes M$, with abelian group $R\oplus M$ and multiplication: $(r,m)(r',m')=(rr',rm'+mr')$ for $r,r'\in R$ and $m,m'\in M$. 

\begin{Koro}\label{cor-exact-tri}
Let $\lambda: R\ra S$ be a ring epimorphism and $M$ an $S$-$S$-bimodule such that ${\rm Tor}^R_i(M,S)$ = $0$ for all $i\ge 1$. If $\pd(_RS)<\infty$, then

{\rm (1)} $\ed(S\ltimes M)\le 2\ed(S)+\ed(R\ltimes M)+2+\max\{2,\pd(_RS)+1\}$.

{\rm (2)} $\ed(R)\le 2\ed(S)+\ed(R\ltimes M)+2+\max\{\pd(_RS),1\}$.

{\rm (3)} $ \ed(\left(\begin{smallmatrix}
S&S\ltimes M\\
0&R\ltimes M
\end{smallmatrix}\right))\le 2\ed(S\ltimes M)+\ed(R)+\max\{2,\pd(_RS)+1\}+2.$
\end{Koro}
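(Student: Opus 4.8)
The plan is to realize this as a direct instance of Theorem \ref{hom-dim}, exactly as Corollary \ref{hom-dim-ring} was. The key observation is that the data $(\lambda,M)$ with $\lambda\colon R\to S$ a ring epimorphism and $M$ an $S$-$S$-bimodule gives rise to an exact context whose constituent algebras are the trivial extensions $S\ltimes M$ and $R\ltimes M$, and whose middle bimodule is $S\ltimes M$ viewed appropriately. So the first step is to set up this exact context carefully: take $\lambda'\colon R\to R\ltimes M$ to be the canonical inclusion $r\mapsto (r,0)$ composed with... more precisely, one wants algebra homomorphisms $R\ltimes M \to (\text{something})$ and $S\to(\text{something})$ fitting the definition of exact context. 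Following the pattern of \cite{cx12-01}, the natural choice is to take the exact context $(\tilde\lambda,\tilde\mu, S\ltimes M, m)$ where $\tilde\lambda\colon R\ltimes M\to S\ltimes M$ is induced by $\lambda$, $\tilde\mu\colon R\ltimes M\to ?$, and then identify the noncommutative tensor product $T\boxtimes_R S$ in that setup as $S\ltimes M$ and the ``$T$'' slot as $R\ltimes M$. I would verify that the defining short exact sequence
$$
0\lra R\ltimes M\lra (S\ltimes M)\oplus(R\ltimes M)\lra S\ltimes M\lra 0
$$
(or the appropriate analogue) is exact as abelian groups; this reduces, after splitting off the $M$-part, to the exactness of $0\to R\to S\oplus R\to S\to 0$ coming from $\lambda$ being... well, from the trivial identity, together with the bimodule compatibility.

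The second step is to check the two hypotheses of Theorem \ref{hom-dim} transfer. The vanishing ${\rm Tor}^R_i(M,S)=0$ for $i\ge 1$ is assumed; I need ${\rm Tor}^R_i(R\ltimes M, S)=0$ for all $i\ge1$, which follows because $R\ltimes M \cong R\oplus M$ as a right $R$-module, so ${\rm Tor}^R_i(R\ltimes M,S)\cong {\rm Tor}^R_i(R,S)\oplus{\rm Tor}^R_i(M,S)=0$ for $i\ge1$. The hypothesis $\pd({_R}S)<\infty$ is assumed directly. With the identifications ``$S$'' $\rightsquigarrow S$, ``$T$'' $\rightsquigarrow R\ltimes M$, ``$R$'' $\rightsquigarrow$ the base ring $R$, ``$T\boxtimes_R S$'' $\rightsquigarrow S\ltimes M$, and the triangular matrix algebra $\left(\begin{smallmatrix}S&M'\\0&T\end{smallmatrix}\right)\rightsquigarrow \left(\begin{smallmatrix}S&S\ltimes M\\0&R\ltimes M\end{smallmatrix}\right)$, items (1), (2), (3) of Theorem \ref{hom-dim} translate verbatim into items (1), (2), (3) of the corollary. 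The third, essentially bookkeeping, step is to confirm that the projective dimension term $\pd({_R}S)$ appearing in Theorem \ref{hom-dim} is unchanged under this passage — which it is, since the base ring $R$ and the module $S$ literally appear as themselves — so that $\max\{2,\pd({_R}S)+1\}$ and $\max\{\pd({_R}S),1\}$ come out exactly as stated.

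The main obstacle is getting the exact-context bookkeeping right: one must pin down precisely which algebra homomorphisms out of $R\ltimes M$ and $S$ constitute the quadruple, check that the noncommutative tensor product of that quadruple is genuinely $S\ltimes M$ (with the twisted multiplication matching the trivial-extension multiplication), and confirm that the ``$T$'' slot is $R\ltimes M$ rather than some Morita-equivalent variant. This is the kind of identification that requires care but no deep idea — once the dictionary between Theorem \ref{hom-dim} and the trivial-extension situation is fixed, the inequalities are immediate. I expect the cleanest write-up to cite the relevant construction from \cite{cx12-01} (or \cite{cx19}) for the trivial-extension exact context, note the Tor-vanishing reduction via $R\ltimes M\cong R\oplus M$ on the right, and then invoke Theorem \ref{hom-dim} directly.
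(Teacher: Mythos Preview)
Your approach is essentially the paper's: set up an exact context whose noncommutative tensor product is $S\ltimes M$ and whose ``$T$'' is $R\ltimes M$, check the Tor-vanishing via $R\ltimes M\cong R\oplus M$ as right $R$-modules, and invoke Theorem~\ref{hom-dim}. Your final dictionary (``$R$''~$\rightsquigarrow R$, ``$S$''~$\rightsquigarrow S$, ``$T$''~$\rightsquigarrow R\ltimes M$, ``$T\boxtimes_R S$''~$\rightsquigarrow S\ltimes M$) and the Tor argument are correct.

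The one point where you drift off course is the identification of the exact context itself. The two algebra maps in an exact context go out of the \emph{base ring} $R$, not out of $R\ltimes M$: the quadruple here is $(\lambda,\mu,S\ltimes M,1)$ with $\lambda\colon R\to S$ the given ring epimorphism and $\mu\colon R\to R\ltimes M$ the canonical inclusion $r\mapsto (r,0)$. Accordingly the defining exact sequence of abelian groups is
\[
0\lra R\lraf{(\lambda,\mu)} S\oplus (R\ltimes M)\lra S\ltimes M\lra 0,
\]
not the sequence you wrote with $R\ltimes M$ on the left. This is precisely what the paper does, citing \cite[Corollary~3.19]{cx17} for the verification that this quadruple is an exact context with $T\boxtimes_R S\simeq S\ltimes M$. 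Once you fix this bookkeeping, your proposal matches the paper's proof exactly.
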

\begin{proof}
Let $T:=R\ltimes M$ and $\mu: R\rightarrow T$ be
the inclusion from $R$ into $T$. By the proof of \cite[Corollary 3.19]{cx17}, $(\lambda, \mu, S\ltimes M, 1 )$ is an exact context with ${\rm Tor}^R_i(T,S)$ = $0$ for all $i\ge 1$ and $T\boxtimes_RS\simeq S\ltimes M$ as algebras. Due to $\pd(_RS)<\infty$, we get that Corollary \ref{cor-exact-tri} follows immediately from Theorem \ref{hom-dim}.
\end{proof}

\begin{Koro}\label{cor-pullback square}
Let $R$ be an Artin algebra, and $I_1$, $I_2$ be ideals of $R$ such that $I_1\cap I_2=0$. Assume $\pd(_RR/I_1)\le \infty$, then

{\rm (1)} $\ed(R/(I_1+I_2))\le 2\ed(R/I_1)+\ed(R/I_2)+2+\max\{2,\pd(_RR/I_1)+1\}$.

{\rm (2)} $\ed(R)\le 2\ed(R/I_1)+\ed(R/I_2)+2+\max\{\pd(_RR/I_1),1\}$.

{\rm (3)} $ \ed(\left(\begin{smallmatrix}
R/I_1&R/(I_1+I_2)\\
0&R/I_2
\end{smallmatrix}\right))\le 2\ed(R/(I_1+I_2))+\ed(R)+\max\{2,\pd(_RR/I_1)+1\}+2.$

\end{Koro}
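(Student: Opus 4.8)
The plan is to present the pullback data as an exact context and then invoke Theorem~\ref{hom-dim}, exactly along the lines of Corollaries~\ref{hom-dim-ring} and \ref{cor-exact-tri}. Put $S:=R/I_1$, $T:=R/I_2$ and $M:=R/(I_1+I_2)$; since $I_1M=0=MI_2$, the group $M$ carries a natural $S$-$T$-bimodule structure. Let $\lambda\colon R\to S$ and $\mu\colon R\to T$ be the canonical surjections and $m:=1+(I_1+I_2)\in M$. First I would verify that $(\lambda,\mu,M,m)$ is an exact context. Under the bimodule structure above, the right multiplication $\cdot m\colon S\to M$ and the left multiplication $m\cdot\colon T\to M$ are the canonical surjections $R/I_1\to R/(I_1+I_2)$ and $R/I_2\to R/(I_1+I_2)$, so the required complex
\[
0\lra R\lraf{(\lambda,\,\mu)}S\oplus T\lraf{\left({\cdot\,m\,\atop{-m\,\cdot}}\right)}M\lra 0
\]
is, up to sign, the familiar short exact sequence $0\to R\to R/I_1\oplus R/I_2\to R/(I_1+I_2)\to 0$ with maps $r\mapsto(r+I_1,\,r+I_2)$ and $(a+I_1,\,b+I_2)\mapsto(a-b)+(I_1+I_2)$. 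Its left exactness is precisely the hypothesis $I_1\cap I_2=0$, while exactness in the middle term and surjectivity on the right are immediate.

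Next I would identify the data occurring in Theorem~\ref{hom-dim}. Since $\lambda$ and $\mu$ are surjective ring homomorphisms (hence ring epimorphisms), the noncommutative tensor product $T\boxtimes_RS$ has underlying abelian group $T\otimes_RS\cong R/(I_1+I_2)$ with the multiplication inherited from $R$, so $T\boxtimes_RS\cong R/(I_1+I_2)$ as algebras; I would cite \cite{cx12-01,cx19} for this. I would also record that ${\rm Tor}^R_i(T,S)={\rm Tor}^R_i(R/I_2,R/I_1)=0$ for all $i\ge 1$: note first that $I_1\cap I_2=0$ forces $I_1I_2=I_2I_1=0$, so that the case $i=1$ is the vanishing of $\ker\bigl((R/I_2)\otimes_RI_1\to R/I_2\bigr)\cong I_1\cap I_2$, and the statement for all $i$ (that the above exact context is homological) will be extracted from \cite{cx12-01}, where exact contexts arising from ideals with zero intersection are treated.

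Finally, with $\pd(_RS)=\pd(_RR/I_1)<\infty$ and $T\boxtimes_RS=R/(I_1+I_2)$, the three inequalities of Theorem~\ref{hom-dim} translate verbatim: part (1) gives (1) (using $\ed(T\boxtimes_RS)=\ed(R/(I_1+I_2))$), part (2) gives (2), and part (3) gives (3) after noting $\left(\begin{smallmatrix}S&M\\0&T\end{smallmatrix}\right)=\left(\begin{smallmatrix}R/I_1&R/(I_1+I_2)\\0&R/I_2\end{smallmatrix}\right)$ together with $\max\{2,\pd(_RS)+1\}=\max\{2,\pd(_RR/I_1)+1\}$ and $\max\{\pd(_RS),1\}=\max\{\pd(_RR/I_1),1\}$.

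I expect the one genuine point to settle to be the homological nature of the exact context, i.e.\ the vanishing of the higher Tor groups ${\rm Tor}^R_i(R/I_2,R/I_1)$ for all $i\ge 1$, together with the identification of $T\boxtimes_RS$ with $R/(I_1+I_2)$ as an algebra rather than merely as an abelian group; both are to be read off from the treatment of ideals with zero intersection in \cite{cx12-01}. Everything else is a direct substitution into Theorem~\ref{hom-dim}.
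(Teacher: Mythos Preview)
Your proposal is correct and follows essentially the same route as the paper's proof: set up the exact context $(\lambda,\mu,R/(I_1+I_2),1)$ from the pullback data, identify $T\boxtimes_RS$ with $R/(I_1+I_2)$, record the Tor-vanishing, and invoke Theorem~\ref{hom-dim}. The paper is slightly terser, deferring all three facts (exactness, Tor-vanishing, and the identification of the noncommutative tensor product) to the proof of \cite[Corollary~3.20]{cx17} rather than to \cite{cx12-01,cx19}; you may wish to align your citation accordingly, since the treatment of pullback squares of this shape is in \cite{cx17} rather than in the earlier preprint.
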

\begin{proof}
We define $\lambda:R\to R/I_1$ and $\mu:R\to R/I_2$ to be the canonical surjective algebra homomorphisms. By the proof of \cite[ Corollary 3.20]{cx17}, we know that $(\lambda,\mu,R/(I_1+I_2),1)$ is an exact context with ${\rm Tor}^R_i(R/I_2,R/I_1)$ = $0$ for all $i\ge 1$  and $R/I_2\boxtimes_RR/I_1\simeq R/(I_1+I_2)$ as algebras. As $\pd(_RR/I_1)\le \infty$, Corollary \ref{cor-pullback square} follows immediately from Theorem \ref{hom-dim}.
\end{proof}

\begin{Koro} \label{cor-exact-homol}
Let $\lambda: R\to S$ be a homological ring epimorphism. 
Suppose that $I$ is an ideal of $R$ such that the image $J'$ of $I$ under $\lambda$ is a left ideal in $S$ and that the restriction of
$\lambda$ to $I$ is injective. 
Let  $J$ be the ideal of $S$ generated by $J'$. Suppose ${\rm Tor}^R_i(R/I,S)=0$ for all $i\ge 1$ and $\pd(_RS)<\infty$. Then

{\rm (1)} $\ed(S/J)\le 2\ed(S)+\ed(R/I)+2+\max\{2,\pd(_RS)+1\}$.

{\rm (2)} $\ed(R)\le 2\ed(S)+\ed(R/I)+2+\max\{\pd(_RS),1\}$.

{\rm (3)} $ \ed(\left(\begin{smallmatrix}
S&S/J'\\
0&R/I
\end{smallmatrix}\right))\le 2\ed(S/J)+\ed(R)+\max\{2,\pd(_RS)+1\}+2.$
\end{Koro}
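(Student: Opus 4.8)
The strategy is to fit the data $(\lambda, I)$ into an exact context to which Theorem \ref{hom-dim} applies, and then read off the three inequalities. I would take $T := R/I$ with $\mu\colon R\to R/I$ the canonical surjection, and consider the quadruple $(\lambda,\mu, S/J', m)$, where $S/J'$ is viewed as an $S$-$R/I$-bimodule (left $S$-module structure as the quotient by the left ideal $J'$, right $R/I$-action through $\lambda$, which is well defined because $J'=\lambda(I)$ and $I$ is a two-sided ideal) and $m$ is the image of $1_S$ in $S/J'$. The first task is to verify that this is an exact context, i.e. that
$$0\lra R\lraf{(\lambda,\mu)} S\oplus R/I \lraf{\left({\cdot\,m\,\atop{-m\,\cdot}}\right)} S/J'\lra 0$$
is exact: injectivity of $(\lambda,\mu)$ follows at once from the injectivity of $\lambda|_I$, whose kernel is $I\cap\ker\lambda=0$, while exactness at $S\oplus R/I$ and surjectivity on the right are a direct check using only $J'=\lambda(I)$. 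This exact context is the one constructed by Chen and Xi, so I would cite \cite{cx17} rather than reprove it, exactly as in the proofs of Corollaries \ref{cor-exact-tri} and \ref{cor-pullback square}.

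Next I would invoke the same source to identify the noncommutative tensor product of this exact context: $R/I\boxtimes_R S\cong S/J$ as algebras, where $J$ is the ideal of $S$ generated by $J'$ as in the statement; this is the step where the hypothesis that $\lambda$ be a homological ring epimorphism (not merely a ring homomorphism) is used. The remaining hypotheses of Theorem \ref{hom-dim} are assumed outright: ${\rm Tor}^R_i(R/I,S)=0$ for all $i\ge 1$ and $\pd(_RS)<\infty$. Hence Theorem \ref{hom-dim} applies with $T=R/I$, with bimodule $M=S/J'$ and with $T\boxtimes_R S=S/J$, and its three items specialize verbatim to items (1), (2) and (3) of the corollary, the triangular matrix algebra in (3) being $\left(\begin{smallmatrix} S & S/J' \\ 0 & R/I \end{smallmatrix}\right)$.

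I expect the only genuine obstacle to be in the first two paragraphs, and it is bookkeeping rather than mathematics: one must match the phrases ``homological ring epimorphism'', ``$J'$ a left ideal of $S$'' and ``$\lambda|_I$ injective'' against exactly what Chen and Xi's exact-context construction requires, and make sure the bimodule structure on $S/J'$ and the distinguished element $m$ are the ones for which both the displayed short exact sequence holds and the noncommutative tensor product equals $S/J$. Once that identification is in place, nothing more is needed: the three bounds of the corollary are inherited directly from Theorem \ref{hom-dim}.
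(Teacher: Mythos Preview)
Your proposal is correct and follows exactly the paper's own argument: the paper sets $\mu:R\to R/I$ to be the canonical surjection, cites \cite[Corollary 3.22]{cx17} for the fact that $(\lambda,\mu,S/J',1)$ is an exact context with $R/I\boxtimes_R S\simeq S/J$, and then applies Theorem~\ref{hom-dim} directly. Your additional remarks on why the displayed sequence is exact and how the $S$-$R/I$-bimodule structure on $S/J'$ arises are a helpful gloss, but the paper simply defers all of this to Chen--Xi.
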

\begin{proof}
Let $\mu:R\to R/I$ to be the canonical surjective algebra homomorphism. By the proof of \cite[ Corollary 3.22]{cx17}, we know that $(\lambda,\mu,S/J',1)$ is an exact context and $R/I\boxtimes_RS\simeq S/J$ as algebras. As ${\rm Tor}^R_i(R/I,S)=0$ for all $i\ge 1$ and $\pd(_RS)<\infty$, Corollary \ref{cor-exact-homol} follows immediately from Theorem \ref{hom-dim}.
\end{proof}

\section{Singular equivalences of Morita type with level}\label{sect-t-level}

In this section, we shall prove that the Igusa-Todorov distance is an invariant under singular equivalence of Morita type with level. To begin, we recall some fundamental results concerning singular equivalences of Morita type with level, as detailed in reference \cite{Wang15}.

In this section, suppose that $A$ is a finite dimensional $k$-algebra over a field $k$. Let
$A^e = A\otimes _k A^{\opp}$ be the enveloping algebra of $A$. We identify
$A$-$A$-bimodules with left $A^e$-modules. Denote by $\Omega _{A^e}(-)$ the syzygy functor on the
stable category $A^e\stmodcat$ of $A$-$A$-bimodules. The following terminology is
due to Wang \cite{Wang15}.

\begin{Def}\label{def-level}
{\rm Let $_AM_B$ and $_BN_A$ be an $A$-$B$-bimodule and a $B$-$A$-bimodule,
respectively, and let $l \geq 0$. We say $(M,N)$ defines a {\it singular equivalence
of Morita type with level} $l$, provided that the following conditions are satisfied:

(1) The four one-sided modules $_AM$, $M_B$, $_BN$ and $N_A$ are all finitely generated projective.

(2) There are isomorphisms $M\otimes _B N \simeq \Omega _{A^e}^l(A)$ and
$N\otimes _A M\simeq \Omega _{B^e}^l(B)$ in
$A^e\stmodcat$ and $B^e\stmodcat$, respectively.}
\end{Def}

\begin{Lem}\label{lem-exact}
Let $A$ and $B$ be two finite dimensional algebras, and $F:A\modcat\ra B\modcat$ be an exact functor. If $F$ preserves projective modules, then for each $A$-module $X$ and each non-positive integer $n$, we have $F(\Omega_A^n(X))\simeq \Omega^n_B(F(X))\oplus Q$ in $B\modcat$ for some projective $B$-module $Q$.
\end{Lem}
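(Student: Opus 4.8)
\textbf{Proof proposal for Lemma \ref{lem-exact}.}

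The plan is to reduce the statement to the case $n=-1$ by an easy induction, and then to handle $n=-1$ by applying the exact functor $F$ to a short exact sequence witnessing the cosyzygy. First I would observe that for $n=0$ there is nothing to prove (take $Q=0$), and for the inductive step, if the claim holds for $n$ and for $-1$, then since $\Omega_A^{n-1}(X)\simeq \Omega_A^{-1}(\Omega_A^n(X))$ we get
$$F(\Omega_A^{n-1}(X))\simeq \Omega_B^{-1}(F(\Omega_A^n(X)))\oplus Q'\simeq \Omega_B^{-1}(\Omega_B^n(F(X))\oplus Q'')\oplus Q',$$
and since $\Omega_B^{-1}$ of a projective module is projective (indeed zero up to a projective summand, as the cosyzygy is computed from an injective envelope) and $\Omega_B^{-1}$ commutes with finite direct sums up to projective summands (by Lemma \ref{lem-shift-module}(2) applied to a split sequence, or directly), this is isomorphic to $\Omega_B^{n-1}(F(X))\oplus Q$ for some projective $B$-module $Q$. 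So the whole content is the base case.

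For the base case $n=-1$, I would start from the short exact sequence $0\to X\to I\to \Omega_A^{-1}(X)\to 0$ where $I$ is the injective envelope of $X$; more robustly, since $F$ need not preserve injectives, I would instead take \emph{any} exact sequence $0\to X\to P\to \Omega^{-1}_A(X)\oplus (\text{proj})\to 0$ — wait, cosyzygies are not defined via projectives. The correct move: work with the sequence $0\to X\to I\to \Omega_A^{-1}(X)\to 0$ with $I$ injective, apply the exact functor $F$ to obtain a short exact sequence $0\to F(X)\to F(I)\to F(\Omega_A^{-1}(X))\to 0$ in $B\modcat$. Now compare this with the defining sequence $0\to F(X)\to I'\to \Omega_B^{-1}(F(X))\to 0$ where $I'$ is the injective envelope of $F(X)$. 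Since $I'$ is the injective envelope, the inclusion $F(X)\hookrightarrow F(I)$ factors through $I'$, and because $F(I)$ need not be injective this is the subtle point; however one can instead use that $I'$ is a direct summand of $F(I)$ only if $F(I)$ is injective, which we do not know.

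The cleanest route, and the one I expect the authors take, is dual to the proof of Lemma \ref{lem-non-func}: use that $F$ preserves projectives to run the argument on the \emph{other} side. Concretely, rewrite using the hypothesis that $F$ is exact and preserves projectives, apply $F$ to a minimal injective copresentation is the wrong idea; instead note that for non-positive $n$, $\Omega^n_A(X)$ for $X$ a module is the $(-n)$-th cosyzygy, and there is a standard fact (the dual of ``exact functors preserving projectives commute with syzygies up to projective summands'') that requires $F$ to preserve \emph{injectives}. Since that fails, I would instead invoke that in the stable category the relevant statement follows from the long exact sequence
$$0\to F(X)\to F(I^0)\to F(I^1)\to\cdots\to F(I^{-n-1})\to F(\Omega_A^n(X))\to 0$$
obtained by applying the exact $F$ to a minimal injective resolution, together with Lemma \ref{lem-wrd}(2): this exhibits $F(\Omega^n_A(X))$ as built from $F(I^i)$, but to conclude it is $\Omega^n_B(F(X))$ up to a projective summand one needs $F(I^i)$ to have finite injective dimension or to be ``projective-like'' after $F$. \textbf{The main obstacle} is precisely this: controlling $F(I)$ for $I$ injective. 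I expect the resolution is that in the intended application $F$ is a composition built from tensoring with bimodules that are projective on both sides (a singular equivalence of Morita type with level), so $F$ also preserves modules of finite projective dimension and, crucially, $F(I)$ is projective for $I$ injective over a \emph{self-injective} part — or, more likely, the statement is applied only with $F = N\otimes_A-$ where $N_A$ is projective, so $F$ preserves injectives because $D(N_A)$ is projective as a left module; in that generality one simply notes $F$ preserves injectives as well, and then the base case is immediate: apply $F$ to $0\to X\to I\to \Omega^{-1}_A(X)\to 0$ with $I$ injective to get a short exact sequence with $F(I)$ injective, split off the injective envelope of $F(X)$, and read off $F(\Omega^{-1}_A(X))\simeq \Omega^{-1}_B(F(X))\oplus(\text{injective})$; finally replace ``injective summand'' by ``projective summand'' is \emph{not} valid in general, so I suspect the lemma as literally stated is used with the understanding that the ambient algebras are such that the relevant summand can be taken projective, or the statement should read ``injective $B$-module $Q$'' — I would flag this and otherwise follow the dual of Lemma \ref{lem-non-func}'s proof verbatim.
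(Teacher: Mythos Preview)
The paper does not give a proof of this lemma; it is stated and then immediately used in the proof of Theorem~\ref{thm-level-ed}(2). So there is nothing to compare your attempt against at the level of argument. What is worth saying is that your diagnosis of the statement is essentially correct.

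You correctly sensed that something is off. As written, the lemma concerns cosyzygies ($n\le 0$) while the hypothesis is that $F$ preserves \emph{projectives}; controlling $F(I)$ for injective $I$ is exactly the obstruction you identified, and there is no way to turn an injective summand into a projective one in general. The statement as printed does not follow from the stated hypotheses, and your various attempted repairs (factor through the injective envelope of $F(X)$, hope $F(I)$ is injective, etc.) fail for precisely the reason you gave.

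The resolution is that ``non-positive'' is a typo for ``non-negative''. Look at how the lemma is invoked in the proof of Theorem~\ref{thm-level-ed}(2): from $Y\in\Omega^{\alpha}(B\modcat)$ with $\alpha\ge 0$, the authors conclude $M\otimes_B Y\in\Omega^{\alpha}(A\modcat)$ via the lemma. This is the syzygy case. With $n\ge 0$ the proof is the routine one you were circling around: take a minimal projective resolution $\cdots\to P_1\to P_0\to X\to 0$, apply the exact functor $F$ to obtain an exact sequence $\cdots\to F(P_1)\to F(P_0)\to F(X)\to 0$ with each $F(P_i)$ projective, and read off that the $n$-th kernel $F(\Omega^n_A(X))$ is an $n$-th syzygy of $F(X)$ computed from a possibly non-minimal resolution, hence isomorphic to $\Omega^n_B(F(X))\oplus Q$ with $Q$ projective. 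This is exactly the argument of Lemma~\ref{lem-non-func}(1) stripped down to module categories, as you suggested at the end.

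So: your instinct to flag the statement was right; the intended lemma is the non-negative version, and for that version the standard projective-resolution argument works.
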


\begin{Theo}\label{thm-level-ed}
Let $A$ and $B$ be two finite dimensional algebras.
If $A$ and $B$ are singularly equivalent of Morita type with level, then

{\rm (1)} $\mid\ed(A)-\ed(B)\mid\le l.$

{\rm (2)} $\oed(A)=\oed(B)$.

{\rm (3)} $\ed(\Omega^{\infty}(A\modcat))=\ed(\Omega^{\infty}(B\modcat)).$
\end{Theo}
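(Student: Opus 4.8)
The plan is to exploit the singular equivalence data $(M,N)$ of level $l$ to build a pair of exact functors $\M := M \otimes_A - : A\modcat \to B\modcat$ and $\N := N \otimes_B - : B\modcat \to A\modcat$. Since $_AM$, $M_B$, $_BN$, $N_A$ are all one-sided finitely generated projective, these functors are exact and preserve projectives; moreover by Definition~\ref{def-level} there are isomorphisms $\N\M(X) \oplus (\text{proj}) \simeq \Omega_A^l(X) \oplus (\text{proj})$ and $\M\N(Y) \oplus (\text{proj}) \simeq \Omega_B^l(Y) \oplus (\text{proj})$ for every module, after applying Lemma~\ref{lem-exact} (with $n=-l$ it gives $\M\N(\Omega_B^{-l}(Y))\simeq \Omega_B^{-l}\M\N(Y)\oplus Q$, but more directly one tensors the bimodule isomorphism $M\otimes_B N \simeq \Omega^l_{A^e}(A)$ with $X$). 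The first step is to record these two facts carefully, since the proof of all three parts rests on transporting the operation $[-]_m$ through $\M$ and $\N$ and then collapsing the composites $\N\M$, $\M\N$ back down via syzygies.

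For part (1), I would start from $\ed(A) = m$, so $A\modcat = [W]_{m+1}$ for some $W$, and apply $\M$: since $\M$ is exact and additive, a standard induction (identical in shape to the argument for Lemma~\ref{lemma-fun-sys}, but in the module category with ordinary short exact sequences) shows $\M(A\modcat) \subseteq [\M(W)]_{m+1}$ inside $B\modcat$. This does not yet bound $\ed(B)$ because $\M(A\modcat)$ need not be all of $B\modcat$. Now take an arbitrary $Y \in B\modcat$; applying $\N$ gives $\N(Y) \in \M(A\modcat)$... no — rather, apply $\N$ to $B\modcat = [V]_{n+1}$ in the other direction and use that $\M\N(Y)\oplus (\text{proj}) \simeq \Omega_B^l(Y) \oplus (\text{proj})$. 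The clean route: from $A\modcat=[W]_{m+1}$ we get $\M\N(Y) \in \M(A\modcat) \subseteq [\M(W)]_{m+1}$ since $\N(Y)\in A\modcat$; hence $\Omega^l_B(Y) \in [\M(W)]_{m+1}$ (projective summands are absorbed into $[-]_1$, which is harmless), and then Lemma~\ref{lem-ext-pi}(3) lifts this to $Y \in [\Omega^{-l}_B(\M(W)) \oplus \bigoplus_{i=0}^{l-1}\Omega^{-i}_B(B)]_{m+l+1}$, giving $\ed(B) \le m + l = \ed(A)+l$. By symmetry $\ed(A) \le \ed(B)+l$, which is part (1).

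For parts (2) and (3) the same mechanism applies but now I track syzygy subcategories. The key observation is that $\M$ and $\N$, being exact and projective-preserving, send $\Omega^p(A\modcat)$ into $\Omega^p(B\modcat)$ and vice versa (from a coresolution-type exact sequence defining a $p$-th syzygy, apply the exact functor term by term; this is the module-category analogue of Lemma~\ref{lem-non-func}, and for $p=\infty$ it works verbatim). Now suppose $\oed(A) = m$, realized by $\Omega^\alpha(A\modcat) \subseteq [W]_{m+1}$. Given $Y \in \Omega^{\alpha+l}(B\modcat)$, write $Y = \Omega^l_B(Y')$ with $Y' \in \Omega^\alpha(B\modcat)$ up to projectives; then $\N(Y') \in \Omega^\alpha(A\modcat) \subseteq [W]_{m+1}$, so $\M\N(Y') \in [\M(W)]_{m+1}$, and since $\M\N(Y') \oplus (\text{proj}) \simeq \Omega^l_B(Y') \oplus (\text{proj}) \simeq Y \oplus (\text{proj})$, we conclude $Y \in [\M(W)]_{m+1}$. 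Hence $\ed(\Omega^{\alpha+l}(B\modcat)) \le m$, so $\oed(B) \le \oed(A)$; by symmetry equality holds. The $\infty$-syzygy case is identical: for $X\in\Omega^\infty(A\modcat)$ one has $\M(X)\in\Omega^\infty(B\modcat)$, and since $\Omega^l_B$ carries $\Omega^\infty(B\modcat)$ onto itself up to projective summands (an $\infty$-th syzygy stays an $\infty$-th syzygy after one more syzygy, and conversely), the same collapsing argument with $\M\N \simeq \Omega^l_B \oplus (\text{proj})$ yields $\ed(\Omega^\infty(B\modcat)) \le \ed(\Omega^\infty(A\modcat))$, hence equality.

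The main obstacle I anticipate is bookkeeping the ubiquitous projective (and, in part (1) of the lifting step, injective-free) direct summands: every application of Lemma~\ref{lem-exact}, of $\M$ to a short exact sequence, and of the syzygy identities introduces extra projective summands, and one must repeatedly invoke that adding a projective module to a representative of $[-]_n$ does not change membership (projectives lie in $[\text{anything}]_1$ after enlarging the generator, or one simply folds them into $W$). A secondary subtlety is that in part (2)/(3) one needs the subcategory $\Omega^\alpha(B\modcat)$ to be stable under $\Omega^l_B$ in both directions modulo projectives — for the "onto" direction one uses the explicit description $\Omega^n(A\modcat) = \{K\oplus P : K \cong \Omega^n(M)\}$ given in the excerpt, so that every $Y$ in $\Omega^\alpha(B\modcat)$ with $\alpha \ge l$ is genuinely of the form $\Omega^l_B(Y')$ with $Y'\in\Omega^{\alpha-l}(B\modcat)$ — so one should phrase the argument in terms of $\oed$ as an infimum over sufficiently large syzygy degrees, which absorbs the shift by $l$.
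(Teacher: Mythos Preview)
Your proposal is correct and follows essentially the same route as the paper: transport $[-]_{m+1}$ through the exact, projective-preserving tensor functors, collapse the composite via $N\otimes_A M\otimes_B Y\simeq\Omega^l_B(Y)$ in the stable category, and then for part~(1) invoke Lemma~\ref{lem-ext-pi}(3) while for parts~(2) and~(3) use that these functors preserve $\Omega^\alpha$ and $\Omega^\infty$. One cosmetic slip: with $_AM_B$ and $_BN_A$ as in Definition~\ref{def-level}, the functor $M\otimes_B-$ goes from $B\modcat$ to $A\modcat$ (not $M\otimes_A-\colon A\modcat\to B\modcat$ as you wrote), but the argument is symmetric in $M$ and $N$ and is unaffected.
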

\begin{proof}
(1) We prove $\ed(B)\le l+\ed(A)$. Indeed, suppose $\ed(A)=u$ with $A\modcat=[U]_{u+1}$ for some $U\in A\modcat$ and $u\ge 0$. For each $Y\in B\modcat$, $M\otimes_BY\in A\modcat$. Then $_AM\otimes_BY\in [U]_{u+1}$ and there are short exact sequences
\begin{equation}\label{exact-my}
\begin{cases}
\xymatrix@C=1.5em@R=0.1em{
0\ar[r]& U_0\ar[r] & (M\otimes_BY)\oplus X_{0}'\ar[r]& X_{1}\ar[r]&0,\\
0\ar[r]& U_1\ar[r] & X_{1}\oplus X_{1}'\ar[r]& X_{2}\ar[r]&0,\\
&&\vdots&&\\
0\ar[r]& U_{n-1}\ar[r] & X_{n-1}\oplus X_{n-1}'\ar[r]& X_{n}\ar[r]&0.
}
\end{cases}
\end{equation}
in $A\modcat$ for some $A$-modules $X_i'$ such that $U_i\in [U]_1$ and $X_{i+1}\in [U]_{u-i}$ for each $0\le i\le u-1$.
Since $N_A$ is a projective right $A$-module, $N\otimes_A-:A\modcat\ra B\modcat$ is an exact functor. Applying the exact functor $N\otimes_A-$ to  the exact sequences (\ref{exact-my}), we get exact sequences
\begin{equation}\label{exact-nmy}
\begin{cases}
\xymatrix@C=1.5em@R=0.1em{
0\ar[r]& N\otimes_AU_0\ar[r] & (N\otimes_AM\otimes_BY)\oplus N\otimes_AX_{0}'\ar[r]& N\otimes_AX_{1}\ar[r]&0,\\
0\ar[r]& N\otimes_AU_1\ar[r] & N\otimes_AX_{1}\oplus N\otimes_AX_{1}'\ar[r]& N\otimes_AX_{2}\ar[r]&0,\\
&&\vdots&&\\
0\ar[r]& N\otimes_AU_{n-1}\ar[r] & N\otimes_AX_{n-1}\oplus N\otimes_AX_{n-1}'\ar[r]& N\otimes_AX_{n}\ar[r]&0.
}
\end{cases}
\end{equation}
By Definition \ref{def-level}, we have $N\otimes_AM\simeq \Omega^l_{B^e}(B)$ in $\stmodcat{B^e}$. By \cite[Theorem 2.2]{h60}, there are projective $B^e$-modules $P$ and $Q$ such that
$N\otimes_AM\oplus P\simeq \Omega^l_{B^e}(B)\oplus Q$ as $B^e$-modules.
Then $$N\otimes_AM\otimes_BY\oplus P\otimes_BY\simeq \Omega^l_{B^e}(B)\otimes_BY\oplus Q\otimes_BY$$ as $B$-modules. 
Due to $P,Q\in \pmodcat{B^e}$, $P\otimes_BY$ and $Q\otimes_BY$ are projective $B$-modules. Then
\begin{align}\label{iso-nmy}
N\otimes_AM \otimes_BY \simeq
\Omega^l_{B^e}(B)\otimes_BY
\end{align} in $\stmodcat{B}$.
Note that we have the following exact sequence
\begin{align}\label{be-seq}
0\lra \Omega^{l}_{B^e}(B)\lra R_{l-1}\lra\cdots \lra R_1\lra R_0\lra B\lra 0
\end{align}
in $B^e\modcat$ with projective $B^e$-modules $R_j$ for $0\le j\le l-1$.
It follows from $B_B\in \pmodcat{B^{\opp}}$ that the exact sequence (\ref{be-seq}) splits in $B^{\opp}\modcat$. Then we have the following exact sequence
\begin{align}\label{y-seq}
0\lra \Omega^{l}_{B^e}(B)\otimes_BY\lra R_{l-1}\otimes_BY\lra\cdots \lra R_1\otimes_BY\lra R_0\otimes_BY\lra Y\lra 0
\end{align}
in $B\modcat$. Note that $R_j\otimes_BY\in \pmodcat{B}$ for each $0\le j\le l-1$. Then $\Omega^{l}_{B^e}(B)\otimes_BY\simeq \Omega^{l}_B(Y)$ in $B\stmodcat$. By (\ref{iso-nmy}), we obtain that
\begin{align}\label{iso2-nmy}
N\otimes_AM \otimes_BY \simeq
\Omega^{l}_B(Y)
\end{align} in $\stmodcat{B}$. 
By \cite[Theorem 2.2]{h60}, there are projective $B$-modules $Q_1$ and $Q_2$ such that
\begin{align}\label{iso4-nmy}
(N\otimes_AM \otimes_BY)\oplus Q_1 \simeq
\Omega^{l}_B(Y)\oplus Q_2
\end{align} in $B\modcat$. 
By the exact sequences (\ref{exact-nmy}), we have 
\begin{align}\label{y-in-nu}
\Omega^{l}_B(Y)\in [N\otimes_AU\oplus A]_{u+1}.
\end{align}
By Lemma \ref{lem-ext-pi}(3), 
$$Y\in [\Omega^{-l}(N\otimes_AU\oplus A)\oplus 
\big(\bigoplus_{i=0}^{l-1}\Omega^{-i}(A)\big)]_{l+u+1}.$$
Thus $\ed(B)\le l+u=l+\ed(A)$. Similarly, we can obtain $\ed(A)\le l+\ed(B)$. Hence $\mid \ed(A)-\ed(B)\mid\le l$.

(2) We prove $\oed(B)\le \oed(A)$. Suppose $\oed(A)=u$ with $\Omega^{\alpha}(A\modcat)\subseteq [U]_{u+1}$ for some $U\in A\modcat$ and $\alpha,u\ge 0$. Since $M_B$ and $_AM$ is projective, $M\otimes_B-:B\modcat\ra A\modcat$ is an exact functor which preseves projective modules. Then for each $Y\in \Omega^{\alpha}(B\modcat)$, by Lemma \ref{lem-exact}, we have $M\otimes_BY\in  \Omega^{\alpha}(A\modcat)$. Then $_AM\otimes_BY\in [U]_{u+1}$. 
An analogous argument to the proof of (1) shows that $\Omega^{l}_B(Y)\in [N\otimes_AU\oplus A]_{u+1}$. Thus $\oed(B)\le u=\oed(A)$. Similarly, we can obtain $\ed(A)\le \ed(B)$. Hence $ \ed(A)=\ed(B)$.

(3)  Note that $M\otimes_B(\Omega^{\infty}(B\modcat))\subseteq \Omega^{\infty}(A\modcat)$. Then this result can be obtained by an analogous argument to the proof of (2).
\end{proof}

Now, let $A$ be a finite dimensional algebra over a field $k$, and $I\subseteq A$ be a two-sided ideal. Following \cite{px06},
$I$ is a \textit{homological ideal} if the canonical map $A \rightarrow  A/I$ is a homological epimorphism, that is, the naturally induced functor $\Db{A/I\modcat} \ra \Db{A\modcat}$ is fully faithful.

\begin{Koro}\label{s-cor-homol}
Let $A$ be a finite dimensional algebra over a field $k$ and let $I\subseteq A$ be a homological ideal which has finite projective dimension as an $A$-$A$-bimodule. Then 

{\rm (1)} $\mid\ed(A)-\ed(A/I)\mid\le 2\cdot\pd(_{A^{\rm e}}A/I).$

{\rm (2)} $\oed(A)=\oed(A/I)$.

{\rm (3)} $\ed(\Omega^{\infty}(A\modcat))=\ed(\Omega^{\infty}(A/I\modcat)).$
\end{Koro}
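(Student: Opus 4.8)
The plan is to deduce Corollary~\ref{s-cor-homol} from Theorem~\ref{thm-level-ed} by exhibiting a singular equivalence of Morita type with level between $A$ and $A/I$ whose level is controlled by $\pd({}_{A^{\rm e}}A/I)$. The starting observation is that, since $I$ is a homological ideal of finite projective dimension as an $A$-$A$-bimodule, the pair of restriction/extension bimodules associated to the projection $A\to A/I$ is a natural candidate. Concretely, set $l:=\pd({}_{A^{\rm e}}A/I)$ and take $M:={}_{A}(A/I)_{A/I}$ and $N:={}_{A/I}(A/I)_{A}$. First I would verify condition~(1) of Definition~\ref{def-level}: ${}_AM=A/I={}_A(A/I)$ need not be projective over $A$, so a direct application does not work — instead one should pass to a syzygy. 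The standard device (this is exactly Dalezios' / Qin's mechanism, already referenced in the introduction) is that a homological ideal with $\pd_{A^{\rm e}}(A/I)=l<\infty$ yields a singular equivalence of Morita type with level; one cites or reproves this, obtaining bimodules $M,N$ with all four one-sided restrictions projective and $M\otimes_{A/I}N\simeq\Omega^{l}_{A^{\rm e}}(A)$, $N\otimes_A M\simeq\Omega^{l}_{(A/I)^{\rm e}}(A/I)$ in the respective stable bimodule categories.

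Once such $(M,N)$ is in hand with level $l=\pd({}_{A^{\rm e}}A/I)$, parts (2) and (3) of the corollary are immediate: Theorem~\ref{thm-level-ed}(2) gives $\oed(A)=\oed(A/I)$ and Theorem~\ref{thm-level-ed}(3) gives $\ed(\Omega^{\infty}(A\modcat))=\ed(\Omega^{\infty}(A/I\modcat))$, with no dependence on $l$ at all. For part (1), Theorem~\ref{thm-level-ed}(1) yields $|\ed(A)-\ed(A/I)|\le l=\pd({}_{A^{\rm e}}A/I)$, which is already sharper than the claimed bound $2\cdot\pd({}_{A^{\rm e}}A/I)$; so the stated inequality follows a fortiori. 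The factor $2$ in the statement presumably leaves room for the case where one must first replace $A/I$ by $\Omega^{j}_{A^{\rm e}}(A/I)$ for some $j\le \pd_{A^{\rm e}}(A/I)$ to arrange projectivity of the one-sided restrictions, and then the level that actually appears is bounded by $2\pd_{A^{\rm e}}(A/I)$; in the write-up I would simply invoke the known construction and record whichever bound it produces, noting it is at most $2\cdot\pd({}_{A^{\rm e}}A/I)$.

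The main obstacle is the verification that a homological ideal $I$ with $\pd({}_{A^{\rm e}}A/I)<\infty$ genuinely produces a singular equivalence of Morita type with level in the precise sense of Definition~\ref{def-level} — in particular checking condition~(1), the projectivity of ${}_AM$, $M_{A/I}$, ${}_{A/I}N$, $N_A$, which forces one to work not with $A/I$ itself but with an appropriate syzygy bimodule $\Omega^{j}_{A^{\rm e}}(A/I)$, using that syzygies of a bimodule that is projective on one side remain one-sided projective. This is essentially a citation to \cite{Dal21} and \cite{qin22} (the complex version) specialized to the bounded-$\pd$ situation; the remaining bookkeeping — tracking how the two-sided projective dimension bounds the level — is routine. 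With that lemma established, the three assertions drop out of Theorem~\ref{thm-level-ed} mechanically.
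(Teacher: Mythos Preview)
Your proposal is correct and matches the paper's approach: the paper simply cites \cite[Theorem~3.9]{qin22} to obtain that $A$ and $A/I$ are singularly equivalent of Morita type with level exactly $2\cdot\pd({}_{A^{\rm e}}A/I)$, and then applies Theorem~\ref{thm-level-ed}. Your speculation about the factor~$2$ is resolved by that citation --- the level produced by Qin's construction is $2\cdot\pd({}_{A^{\rm e}}A/I)$, not $\pd({}_{A^{\rm e}}A/I)$, so part~(1) comes out exactly as stated rather than sharper.
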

\begin{proof}
By \cite[Theorem 3.9]{qin22}, $A$ and $A/I$ are singularly equivalent of Morita type with level $2\cdot\pd(_{A^{\rm e}}A/I)$. Then the corollary follows from Theorem \ref{thm-level-ed}.
\end{proof}

Recall that an extension $B\subseteq A$ of finite dimensional algebras is \textit{bounded} if there exists $p\ge 1$ such that the tensor power $(A/B)^{\otimes _Bp}$ vanishes,$A/B$ has finite projective dimension as a $B$-$B$-bimodule, and ${\rm Tor}_i^B(A/B, (A/B)^{\otimes_B j})=0$ for all $i, j\ge 1$.

\begin{Koro}\label{s-cor-b-ext}
Let $B\subseteq A$ be a bounded extension with $(A/B)^{\otimes _Bp}=0$ for some $p\ge 1$. Then 

{\rm (1)} $\mid\ed(A)-\ed(B)\mid\le 2\cdot\pd(_{B^{\rm e}}A/B)+p-1.$

{\rm (2)} $\oed(A)=\oed(B)$.

{\rm (3)} $\ed(\Omega^{\infty}(A\modcat))=\ed(\Omega^{\infty}(B\modcat)).$
\end{Koro}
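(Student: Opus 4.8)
The plan is to reduce everything to Theorem \ref{thm-level-ed} by producing a singular equivalence of Morita type with level between $A$ and $B$ whose level can be computed explicitly in terms of the data of the bounded extension. The key input is a result from the literature on bounded extensions: by \cite{qxzz24}, a bounded extension $B\subseteq A$ with $(A/B)^{\otimes_B p}=0$ induces a singular equivalence of Morita type with level $l$, where $l = 2\cdot\pd(_{B^{\rm e}}A/B)+p-1$. (This is precisely the situation alluded to in the introduction, where the authors recall that in \cite{qxzz24} bounded extensions are shown to yield new examples of singular equivalences of Morita type with level.) So the first step is simply to invoke that theorem to obtain the pair of bimodules $(M,N)$ realizing a singular equivalence of Morita type with level $l$ between $A$ and $B$, with the above value of $l$.

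Once that is in hand, the second and final step is a direct application of Theorem \ref{thm-level-ed}: part (1) of that theorem gives $\mid\ed(A)-\ed(B)\mid\le l = 2\cdot\pd(_{B^{\rm e}}A/B)+p-1$, which is assertion (1); part (2) gives $\oed(A)=\oed(B)$, which is assertion (2); and part (3) gives $\ed(\Omega^{\infty}(A\modcat))=\ed(\Omega^{\infty}(B\modcat))$, which is assertion (3). There is essentially no computation beyond citing these two results in sequence.

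The only genuine obstacle is making sure the level coming from the bounded-extension construction really is $2\cdot\pd(_{B^{\rm e}}A/B)+p-1$ and not some other combination of the invariants; this is where one must be careful to quote \cite{qxzz24} with the correct normalization (the factor of $2$ matches the pattern already seen in Corollary \ref{s-cor-homol}, where a homological ideal of finite bimodule projective dimension $d$ gives level $2d$ via \cite{qin22}, so the shape $2\cdot\pd + (p-1)$ is the expected one, the extra $p-1$ accounting for the nilpotency length of the quotient $A/B$ as a bimodule). Assuming that citation provides exactly this level, the corollary is immediate. A short proof would therefore read: ``By \cite{qxzz24}, $A$ and $B$ are singularly equivalent of Morita type with level $2\cdot\pd(_{B^{\rm e}}A/B)+p-1$. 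The assertions now follow from Theorem \ref{thm-level-ed}.''
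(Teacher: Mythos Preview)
Your proposal is correct and matches the paper's own proof almost verbatim: the paper also deduces the corollary by citing \cite[Theorem 0.3]{qxzz24} together with \cite[Proposition 3.3]{qin22} to obtain a singular equivalence of Morita type with level $2\cdot\pd(_{B^{\rm e}}A/B)+p-1$, and then applies Theorem \ref{thm-level-ed}. The only refinement is that the paper explicitly invokes \cite[Proposition 3.3]{qin22} alongside \cite{qxzz24} to pin down the exact level, so you may want to add that second citation.
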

\begin{proof}
By \cite[Theorem 0.3]{qxzz24} and \cite[Proposition 3.3]{qin22}, $A$ and $B$ are singularly equivalent of Morita type with level $2\cdot\pd(_{B^{\rm e}}A/B)+p-1$. Then the corollary follows from Theorem \ref{thm-level-ed}.
\end{proof}

\bigskip
\noindent{\bf Acknowledgements.}
Jinbi Zhang was supported by the National Natural Science Foundation of China (No. 12401038).
%Junling Zheng was supported by the National Natural Science Foundation of China (Grant No. 12001508) and the project of Youth Innovation Team of Universities of Shandong Province (Grant No. 2022KJ314).

\noindent{\bf Declaration of interests.} The authors have no conflicts of interest to disclose.

\noindent{\bf Data availability.} No new data were created or analyzed in this study.

\medskip

\end{document}